\newtheorem{cor}[subsubsection]{Corollary}
\newtheorem{lem}[subsubsection]{Lemma}
\newtheorem{prop}[subsubsection]{Proposition}
\newtheorem{conj}[subsubsection]{Conjecture}
\newtheorem{thm}[subsubsection]{Theorem}
\newtheorem{quest}[subsubsection]{Question}
\theoremstyle{remark}
\newtheorem{rem}[subsubsection]{Remark}
\theoremstyle{definition}
\theoremstyle{remark}
\newcommand{\thmref}[1]{Theorem~\ref{#1}}
\newcommand{\secref}[1]{Sect.~\ref{#1}}
\newcommand{\lemref}[1]{Lemma~\ref{#1}}
\newcommand{\propref}[1]{Proposition~\ref{#1}}
\newcommand{\corref}[1]{Corollary~\ref{#1}}
\newcommand{\conjref}[1]{Conjecture~\ref{#1}}
\numberwithin{equation}{section}
\newcommand{\nc}{\newcommand}
\nc{\renc}{\renewcommand}
\nc{\ssec}{\subsection}
\nc{\sssec}{\subsubsection}
\nc{\on}{\operatorname}
\nc{\ips}{{\iota_P^{(S)}}}
\nc{\ipms}{{\iota_{P^-}^{(S)}}}
\nc{\sfpps}{{\sfp_P^{(S)}}}
\nc{\sfppms}{{\sfp_{P^-}^{(S)}}}
\nc\ol{\overline}
\nc\ul{\underline}
\nc\wt{\widetilde}
\nc\tboxtimes{\wt{\boxtimes}}
\nc\tstar{\wt{\star}}
\nc{\alp}{\alpha}
\nc{\ZZ}{{\mathbb Z}}
\nc{\NN}{{\mathbb N}}
\nc{\OO}{{\mathbb O}}
\renc{\SS}{{\mathbb S}}
\nc{\DD}{{\mathbb D}}
\nc{\GG}{{\mathbb G}}
\nc{\Fq}{{\mathbb F}_q}
\nc{\Fqb}{\ol{\mathbb F}_q}
\nc{\Ql}{{\mathbb Q}_\ell}
\nc{\Qlb}{{\ol{\mathbb Q}_\ell}}
\nc{\id}{\text{id}}
\nc\X{\mathcal X}
\nc{\red}{\on{red}}
\nc{\Ho}{\on{Ho}}
\nc{\Hom}{\on{Hom}}
\nc{\coHom}{\ul{\on{coHom}}}
\nc{\coMaps}{{\bf{coMaps}}}
\nc{\coef}{\on{coef}}
\nc{\Lie}{\on{Lie}}
\nc{\Loc}{\on{Loc}}
\nc{\coLoc}{\on{coLoc}}
\nc{\Pic}{\on{Pic}}
\nc{\Bun}{\on{Bun}}
\nc{\IC}{\on{IC}}
\nc{\Aut}{\on{Aut}}
\nc{\rk}{\on{rk}}
\nc{\Sh}{\on{Sh}}
\nc{\Perv}{\on{Perv}}
\nc{\pos}{{\on{pos}}}
\nc{\Conv}{\on{Conv}}
\nc{\Sph}{\on{Sph}}
\nc{\Sym}{\on{Sym}}
\nc{\BunBb}{\overline{\Bun}_B}
\nc{\BunNb}{\overline{\Bun}_N}
\nc{\BunTb}{\overline{\Bun}_T}
\nc{\BunBbm}{\overline{\Bun}_{B^-}}
\nc{\BunBbel}{\overline{\Bun}_{B,el}}
\nc{\BunBbmel}{\overline{\Bun}_{B^-,el}}
\nc{\Buno}{\overset{o}{\Bun}}
\nc{\BunPb}{{\overline{\Bun}_P}}
\nc{\BunBM}{\Bun_{B(M)}}
\nc{\BunBMb}{\overline{\Bun}_{B(M)}}
\nc{\BunPbw}{{\widetilde{\Bun}_P}}
\nc{\BunBP}{\widetilde{\Bun}_{B,P}}
\nc{\GUb}{\overline{G/U}}
\nc{\GUPb}{\overline{G/U(P)}}
\nc{\Hhom}{\underline{\on{Hom}}}
\nc\syminfty{\on{Sym}^{\infty}}
\nc\lal{\ol{\lambda}}
\nc\xl{\ol{x}}
\nc\thl{\ol{\theta}}
\nc\nul{\ol{\nu}}
\nc\mul{\ol{\mu}}
\nc{\oX}{\overset{o}{X}{}}
\nc{\hl}{\overset{\leftarrow}h{}}
\nc{\hr}{\overset{\rightarrow}h{}}
\nc{\M}{{\mathcal M}}
\nc{\N}{{\mathcal N}}
\nc{\F}{{\mathcal F}}
\nc{\D}{{\mathcal D}}
\nc{\Q}{{\mathcal Q}}
\nc{\Y}{{\mathcal Y}}
\nc{\G}{{\mathcal G}}
\nc{\E}{{\mathcal E}}
\nc{\CalC}{{\mathcal C}}
\nc\Dh{\widehat{\D}}
\nc{\C}{{\mathcal C}}
\nc{\K}{{\mathcal K}}
\renewcommand{\H}{{\mathcal H}}
\nc{\T}{{\mathcal T}}
\nc{\V}{{\mathcal V}}
\renc{\P}{{\mathcal P}}
\nc{\A}{{\mathcal A}}
\nc{\B}{{\mathcal B}}
\nc{\U}{{\mathcal U}}
\nc{\Gr}{{\on{Gr}}}
\nc{\frn}{{\check{\mathfrak u}(P)}}
\nc{\fC}{\mathfrak C}
\nc{\fT}{\mathfrak T}
\nc{\p}{\mathfrak p}
\nc{\q}{\mathfrak q}
\nc\f{{\mathfrak f}}
\nc{\qo}{{\mathfrak q}}
\nc{\po}{{\mathfrak p}}
\nc{\s}{{\mathfrak s}}
\nc\w{\text{w}}
\nc\Spec{\on{Spec}}
\nc\Proj{\on{Proj}}
\nc\Mod{\on{Mod}}
\nc{\tw}{\widetilde{\mathfrak t}}
\nc{\pw}{\widetilde{\mathfrak p}}
\nc{\qw}{\widetilde{\mathfrak q}}
\nc{\jw}{\widetilde j}
\nc{\grb}{\overline{\Gr}}
\nc{\I}{\mathcal I}
\nc{\lambdach}{{\check\lambda}}
\nc{\Lambdach}{{\check\Lambda}{}}
\nc{\much}{{\check\mu}}
\nc{\omegach}{{\check\omega}}
\nc{\nuch}{{\check\nu}}
\nc{\etach}{{\check\eta}}
\nc{\alphach}{{\check\alpha}}
\nc{\oblvtach}{{\check\oblvta}}
\nc{\rhoch}{{\check\rho}}
\nc{\ch}{{\check h}}
\nc{\Hb}{\overline{\H}}
\nc{\BA}{{\mathbb{A}}}
\nc{\BC}{{\mathbb{C}}}
\nc{\BE}{{\mathbb{E}}}
\nc{\BF}{{\mathbb{F}}}
\nc{\BG}{{\mathbb{G}}}
\nc{\BL}{{\mathbb{L}}}
\nc{\BM}{{\mathbb{M}}}
\nc{\BO}{{\mathbb{O}}}
\nc{\BD}{{\mathbb{D}}}
\nc{\BN}{{\mathbb{N}}}
\nc{\BP}{{\mathbb{P}}}
\nc{\BQ}{{\mathbb{Q}}}
\nc{\BR}{{\mathbb{R}}}
\nc{\BV}{{\mathbb{V}}}
\nc{\BZ}{{\mathbb{Z}}}
\nc{\BS}{{\mathbb{S}}}
\nc{\Deep}{{\bf{deep}}}
\nc{\deep}{deep}
\nc{\CA}{{\mathcal{A}}}
\nc{\CB}{{\mathcal{B}}}
\nc{\CE}{{\mathcal{E}}}
\nc{\CF}{{\mathcal{F}}}
\nc{\CH}{{\mathcal{H}}}
\nc{\CL}{{\mathcal{L}}}
\nc{\CC}{{\mathcal{C}}}
\nc{\CG}{{\mathcal{G}}}
\nc{\CalD}{{\mathcal{D}}}
\nc{\CM}{{\mathcal{M}}}
\nc{\CN}{{\mathcal{N}}}
\nc{\CK}{{\mathcal{K}}}
\nc{\CO}{{\mathcal{O}}}
\nc{\CP}{{\mathcal{P}}}
\nc{\CQ}{{\mathcal{Q}}}
\nc{\CR}{{\mathcal{R}}}
\nc{\CS}{{\mathcal{S}}}
\nc{\CT}{{\mathcal{T}}}
\nc{\CU}{{\mathcal{U}}}
\nc{\CV}{{\mathcal{V}}}
\nc{\CW}{{\mathcal{W}}}
\nc{\CX}{{\mathcal{X}}}
\nc{\CY}{{\mathcal{Y}}}
\nc{\CZ}{{\mathcal{Z}}}
\nc{\CI}{{\mathcal{I}}}
\nc{\csM}{{\check{\mathcal A}}{}}
\nc{\oM}{{\overset{\circ}{\mathcal M}}{}}
\nc{\obM}{{\overset{\circ}{\mathbf M}}{}}
\nc{\oCA}{{\overset{\circ}{\mathcal A}}{}}
\nc{\obA}{{\overset{\circ}{\mathbf A}}{}}
\nc{\ooM}{{\overset{\circ}{M}}{}}
\nc{\osM}{{\overset{\circ}{\mathsf M}}{}}
\nc{\vM}{{\overset{\bullet}{\mathcal M}}{}}
\nc{\nM}{{\underset{\bullet}{\mathcal M}}{}}
\nc{\oD}{{\overset{\circ}{\mathcal D}}{}}
\nc{\obD}{{\overset{\circ}{\mathbf D}}{}}
\nc{\oA}{{\overset{\circ}{A}}{}}
\nc{\op}{{\overset{\bullet}{\mathbf p}}{}}
\nc{\cp}{{\overset{\circ}{\mathbf p}}{}}
\nc{\oU}{{\overset{\bullet}{\mathcal U}}{}}
\nc{\oZ}{{\overset{\circ}{\mathcal Z}}{}}
\nc{\ofZ}{{\overset{\circ}{\mathfrak Z}}{}}
\nc{\oF}{{\overset{\circ}{\fF}}}
\nc{\fa}{{\mathfrak{a}}}
\nc{\ofa}{\overset{\circ}{\mathfrak{a}}}
\nc{\fb}{{\mathfrak{b}}}
\nc{\fd}{{\mathfrak{d}}}
\nc{\ff}{{\mathfrak{f}}}
\nc{\fg}{{\mathfrak{g}}}
\nc{\fgl}{{\mathfrak{gl}}}
\nc{\fh}{{\mathfrak{h}}}
\nc{\fj}{{\mathfrak{j}}}
\nc{\fl}{{\mathfrak{l}}}
\nc{\fm}{{\mathfrak{m}}}
\nc{\ofm}{\overset{\circ}{\mathfrak{m}}}
\nc{\fn}{{\mathfrak{n}}}
\nc{\fu}{{\mathfrak{u}}}
\nc{\fp}{{\mathfrak{p}}}
\nc{\fr}{{\mathfrak{r}}}
\nc{\fs}{{\mathfrak{s}}}
\nc{\ft}{{\mathfrak{t}}}
\nc{\oft}{\overset{\circ}{\mathfrak{t}}}
\nc{\fz}{{\mathfrak{z}}}
\nc{\fsl}{{\mathfrak{sl}}}
\nc{\hsl}{{\widehat{\mathfrak{sl}}}}
\nc{\hgl}{{\widehat{\mathfrak{gl}}}}
\nc{\hg}{{\widehat{\mathfrak{g}}}}
\nc{\hm}{{\widehat{\mathfrak{m}}}}
\nc{\chg}{{\widehat{\mathfrak{g}}}{}^\vee}
\nc{\hn}{{\widehat{\mathfrak{n}}}}
\nc{\chn}{{\widehat{\mathfrak{n}}}{}^\vee}
\nc{\fA}{{\mathfrak{A}}}
\nc{\fB}{{\mathfrak{B}}}
\nc{\fD}{{\mathfrak{D}}}
\nc{\fE}{{\mathfrak{E}}}
\nc{\fF}{{\mathfrak{F}}}
\nc{\fG}{{\mathfrak{G}}}
\nc{\fK}{{\mathfrak{K}}}
\nc{\fL}{{\mathfrak{L}}}
\nc{\fM}{{\mathfrak{M}}}
\nc{\fN}{{\mathfrak{N}}}
\nc{\fP}{{\mathfrak{P}}}
\nc{\fU}{{\mathfrak{U}}}
\nc{\fV}{{\mathfrak{V}}}
\nc{\fZ}{{\mathfrak{Z}}}
\nc{\ba}{{\mathbf{a}}}
\nc{\bb}{{\mathbf{b}}}
\nc{\bc}{{\mathbf{c}}}
\nc{\bd}{{\mathbf{d}}}
\nc{\bbf}{{\mathbf{f}}}
\nc{\be}{{\mathbf{e}}}
\nc{\bi}{{\mathbf{i}}}
\nc{\bj}{{\mathbf{j}}}
\nc{\bh}{{\mathbf{h}}}
\nc{\bm}{{\mathbf{m}}}
\nc{\bn}{{\mathbf{n}}}
\nc{\bo}{{\mathbf{o}}}
\nc{\bp}{{\mathbf{p}}}
\nc{\bq}{{\mathbf{q}}}
\nc{\bu}{{\mathbf{u}}}
\nc{\bv}{{\mathbf{v}}}
\nc{\bx}{{\mathbf{x}}}
\nc{\bs}{{\mathbf{s}}}
\nc{\by}{{\mathbf{y}}}
\nc{\bw}{{\mathbf{w}}}
\nc{\bA}{{\mathbf{A}}}
\nc{\bK}{{\mathbf{K}}}
\nc{\bB}{{\mathbf{B}}}
\nc{\bC}{{\mathbf{C}}}
\nc{\bG}{{\mathbf{G}}}
\nc{\bD}{{\mathbf{D}}}
\nc{\bE}{{\mathbf{E}}}
\nc{\bH}{{{\mathbf{H}}}}
\nc{\bL}{{\mathbf{L}}}
\nc{\bM}{{\mathbf{M}}}
\nc{\bN}{{\mathbf{N}}}
\nc{\bO}{{\mathbf{O}}}
\nc{\bQ}{{\mathbf{Q}}}
\nc{\bV}{{\mathbf{V}}}
\nc{\bW}{{\mathbf{W}}}
\nc{\bX}{{\mathbf{X}}}
\nc{\bZ}{{\mathbf{Z}}}
\nc{\bS}{{\mathbf{S}}}
\nc{\sA}{{\mathsf{A}}}
\nc{\sB}{{\mathsf{B}}}
\nc{\sC}{{\mathsf{C}}}
\nc{\sD}{{\mathsf{D}}}
\nc{\sF}{{\mathsf{F}}}
\nc{\sG}{{\mathsf{G}}}
\nc{\sH}{{\mathsf{H}}}
\nc{\sK}{{\mathsf{K}}}
\nc{\sM}{{\mathsf{M}}}
\nc{\sN}{{\mathsf{N}}}
\nc{\sO}{{\mathsf{O}}}
\nc{\sW}{{\mathsf{W}}}
\nc{\sQ}{{\mathsf{Q}}}
\nc{\sP}{{\mathsf{P}}}
\nc{\sR}{{\mathsf{R}}}
\nc{\sT}{{\mathsf{T}}}
\nc{\sZ}{{\mathsf{Z}}}
\nc{\sfi}{{\mathsf{i}}}
\nc{\sfj}{{\mathsf{j}}}
\nc{\sfp}{{\mathsf{p}}}
\nc{\sfq}{{\mathsf{q}}}
\nc{\sfs}{{\mathsf{s}}}
\nc{\sft}{{\mathsf{t}}}
\nc{\sr}{{\mathsf{r}}}
\nc{\bk}{{\mathsf{k}}}
\nc{\sa}{{\mathsf{s}}}
\nc{\sg}{{\mathsf{g}}}
\nc{\sn}{{\mathsf{n}}}
\nc{\sh}{{\mathsf{h}}}
\nc{\sff}{{\mathsf{f}}}
\nc{\sfb}{{\mathsf{b}}}
\nc{\sfc}{{\mathsf{c}}}
\nc{\sfe}{{\mathsf{e}}}
\nc{\sd}{{\mathsf{d}}}
\nc{\BK}{{\bar{K}}}
\nc{\tA}{{\widetilde{\mathbf{A}}}}
\nc{\tB}{{\widetilde{\mathcal{B}}}}
\nc{\tg}{{\widetilde{\mathfrak{g}}}}
\nc{\tG}{{\widetilde{G}}}
\nc{\TM}{{\widetilde{\mathbb{M}}}{}}
\nc{\tO}{{\widetilde{\mathsf{O}}}{}}
\nc{\tU}{{\widetilde{\mathfrak{U}}}{}}
\nc{\TZ}{{\tilde{Z}}}
\nc{\tx}{{\tilde{x}}}
\nc{\tbv}{{\tilde{\bv}}}
\nc{\tfP}{{\widetilde{\mathfrak{P}}}{}}
\nc{\tz}{{\tilde{\zeta}}}
\nc{\tmu}{{\tilde{\mu}}}
\nc{\urho}{\underline{\rho}}
\nc{\uB}{\underline{B}}
\nc{\uC}{{\underline{\mathbb{C}}}}
\nc{\ui}{\underline{i}}
\nc{\uj}{\underline{j}}
\nc{\ofP}{{\overline{\mathfrak{P}}}}
\nc{\oB}{{\overline{\mathcal{B}}}}
\nc{\og}{{\overline{\mathfrak{g}}}}
\nc{\oI}{{\overline{I}}}
\nc{\eps}{\varepsilon}
\nc{\hrho}{{\hat{\rho}}}
\nc{\one}{{\mathbf{1}}}
\nc{\two}{{\mathbf{t}}}
\nc{\Rep}{{\mathop{\operatorname{\rm Rep}}}}
\nc{\Tot}{{\mathop{\operatorname{\rm Tot}}}}
\nc{\Ker}{{\mathop{\operatorname{\rm Ker}}}}
\nc{\im}{{\mathop{\operatorname{\rm Im}}}}
\nc{\Hilb}{{\mathop{\operatorname{\rm Hilb}}}}
\nc{\End}{{\mathop{\operatorname{\rm End}}}}
\nc{\Ext}{{\mathop{\operatorname{\rm Ext}}}}
\nc{\CHom}{{\mathop{\operatorname{{\mathcal{H}}\it om}}}}
\nc{\CEnd}{{\mathop{\operatorname{{\mathcal{E}}\it nd}}}}
\nc{\GL}{{\mathop{\operatorname{\rm GL}}}}
\nc{\gr}{{\mathop{\operatorname{\rm gr}}}}
\nc{\HN}{{\mathop{\operatorname{\rm HN}}}}
\nc{\Id}{{\mathop{\operatorname{\rm Id}}}}
\nc{\de}{{\mathop{\operatorname{\rm def}}}}
\nc{\length}{{\mathop{\operatorname{\rm length}}}}
\nc{\supp}{{\mathop{\operatorname{\rm supp}}}}
\nc{\Cliff}{{\mathsf{Cliff}}}
\nc{\Fl}{\on{Fl}}
\nc{\Fib}{{\mathsf{Fib}}}
\nc{\Coh}{{\on{Coh}}}
\nc{\QCoh}{{\on{QCoh}}}
\nc{\IndCoh}{{\on{IndCoh}}}
\nc{\FCoh}{{\mathsf{FCoh}}}
\nc{\reg}{{\text{\rm reg}}}
\nc{\cplus}{{\mathbf{C}_+}}
\nc{\cminus}{{\mathbf{C}_-}}
\nc{\cthree}{{\mathbf{C}_\bullet}}
\nc{\Qbar}{{\bar{Q}}}
\nc\Eis{\on{Eis}}
\nc\Eisb{\ol\Eis{}}
\nc\Eisr{\on{Eis}^{rat}{}}
\nc\wh{\widehat}
\nc{\Def}{\on{Def_{\check{\fb}}(E)}}
\nc{\barZ}{\overline{Z}{}}
\nc{\barbarZ}{\overline{\barZ}{}}
\nc{\barpi}{\overline\pi}
\nc{\barbarpi}{\overline\barpi}
\nc{\barpip}{\overline\pi{}^+}
\nc{\barpim}{\overline\pi{}^-}
\nc{\fq}{\mathfrak q}
\nc{\fqb}{\ol{\sfq}{}}
\nc{\fpb}{\ol{\sfp}{}}
\nc{\fpr}{{\mathsf{pair}^{rat}}{}}
\nc{\fqr}{{\sfq^{rat}}{}}
\nc{\hattimes}{\wh\otimes}
\nc{\bOmega}{{\overline{\Omega(\check \fn)}}}
\nc{\seq}[1]{\stackrel{#1}{\sim}}
\nc{\cT}{{\check{T}}}
\nc{\cG}{{\check{G}}}
\nc{\cM}{{\check{M}}}
\nc{\cB}{{\check{B}}}
\nc{\cP}{{\check{P}}}
\nc{\ct}{{\check{\mathfrak t}}}
\nc{\cg}{{\check{\fg}}}
\nc{\cb}{{\check{\fb}}}
\nc{\cn}{{\check{\fn}}}
\nc{\cLambda}{{\check\Lambda}}
\nc{\cla}{{\check\lambda}}
\nc{\cmu}{{\check\mu}}
\nc{\cnu}{{\check\nu}}
\nc{\ceta}{{\check\eta}}
\nc{\DefbE}{{\on{Def}_{\cB}(E_\cT)}}
\nc{\imathb}{{\ol{\imath}}}
\nc{\rlr}{\overset{\longrightarrow}{\underset{\longrightarrow}\longleftarrow}}
\nc{\oBun}{\overset{\circ}\Bun}
\nc{\LS}{\on{LS}}
\nc{\BunBbb}{\ol{\ol{Bun}}_B}
\nc{\BunBr}{\Bun_B^{rat}}
\nc{\BunBrsg}{\Bun_B^{rat,\on{s.g.}}}
\nc{\BunBrp}{\Bun_B^{rat,polar}}
\nc{\BunBrpbg}{\Bun_B^{rat,polar,\on{b.g.}}}
\nc{\BunBrpsg}{\Bun_B^{rat,polar,\on{s.g.}}}
\nc{\BunTrp}{\Bun_T^{rat,polar}}
\nc{\BunTrpbg}{\Bun_T^{rat,polar,\on{b.g.}}}
\nc{\BunTrpsg}{\Bun_T^{rat,polar,\on{s.g.}}}
\nc{\BunNr}{\Bun_N^{rat}}
\nc{\BunNre}{\Bun_N^{enh,rat}}
\nc{\BunTr}{\Bun_T^{rat}}
\nc{\Vect}{\on{Vect}}
\nc{\Whit}{\on{Whit}}
\nc{\CTb}{\ol{\on{CT}}}
\nc{\Ran}{{\on{Ran}}}
\nc{\Ranu}{{\on{Ran}^{\on{untl}}}}
\nc{\Ranustr}{{\on{Ran}^{\on{untl}}_{\on{str}}}}
\nc{\Ranusubset}{{\Ran^{\on{untl}}_{\subseteq}}}
\nc{\Ranusubsetx}{{\Ran^{\on{untl}}_{x\subseteq}}}
\nc{\CTr}{\on{CT}^{rat}{}}
\nc\jmathr{\jmath^{rat}{}}
\nc{\ux}{\underline{x}}
\nc{\clambda}{{\check\lambda}}
\nc{\calpha}{{\check\alpha}}
\nc{\ind}{{\mathbf{ind}}}
\nc{\coinv}{{\mathbf{coinv}}}
\nc{\oblv}{{\mathbf{oblv}}}
\nc{\free}{{\mathbf{free}}}
\nc{\ox}{{\overline{x}}}
\nc{\cLa}{\check{\Lambda}}
\nc{\StinftyCat}{\on{DGCat}}
\nc{\inftyCat}{\infty\on{-Cat}}
\nc{\inftygroup}{\infty\on{-Grpd}}
\nc{\Dmod}{\on{D-mod}}
\nc{\CMaps}{{\mathcal Maps}}
\nc{\Maps}{\on{Maps}}
\nc{\affSch}{\on{Sch}^{\on{aff}}}
\nc{\dr}{{\on{dR}}}
\nc{\oCF}{\overset{\circ}\CF}
\nc{\oCY}{\overset{\circ}\CY}
\nc{\oCZ}{\overset{\circ}\CZ}
\nc{\opi}{\overset{\circ}\pi}
\nc{\leqG}{\underset{G}\leq}
\nc{\leqM}{\underset{M}\leq}
\nc{\leqGad}{\underset{G_{ad}}\leq}
\nc{\leqMad}{\underset{M_{ad}}\leq}
\nc{\Tr}{\on{Tr}}
\nc{\Frob}{{\on{Frob}}}
\nc{\DGCat}{\on{DGCat}}
\nc{\tDGCat}{2\on{-DGCat}_{\on{u.g.}}}
\nc{\ev}{\on{ev}}
\nc{\mmod}{\on{-}\mathbf{mod}}
\nc{\sotimes}{\overset{!}\otimes}
\nc{\Shv}{\on{Shv}}
\nc{\Spc}{\on{Spc}}
\nc{\Res}{\on{Res}}
\nc{\bDelta}{{\mathbf{\Delta}}}
\nc{\bMaps}{{\mathbf{Maps}}}
\nc{\cD}{\mathcal D}
\nc{\ocD}{\cD^\times}
\nc{\ppart}{(\!(t)\!)}
\nc{\qqart}{[\![t]\!]}
\nc{\oCU}{\overset{\circ}{\CU}}
\nc{\Exc}{{\mathcal{E}xc}}
\nc{\Sht}{\on{Sht}}
\nc{\Nilp}{{\on{Nilp}}}
\nc{\Drinf}{\on{Drinf}}
\nc{\Sing}{\on{Sing}}
\nc{\IndLisse}{\Lisse}
\nc{\Shvl}{\on{Shv}_{\on{lisse}}} 
\nc{\Lisse}{\on{Lisse}}
\nc{\Mir}{\on{Mir}}
\nc{\fSet}{\on{fSet}}
\nc{\qLisse}{\on{QLisse}}
\nc{\Ev}{\on{Ev}}
\nc{\Sat}{\on{Sat}}
\nc{\Se}{\on{Se}}
\nc{\coSht}{\on{co-Sht}}
\nc{\coCK}{\on{co-}\!\CK}
\nc{\FLE}{\on{FLE}}
\nc{\BRST}{\on{BRST}}
\nc{\KL}{\on{KL}}
\nc{\crit}{{\on{crit}}}
\nc{\Op}{{\on{Op}}}
\nc{\MOp}{\on{MOp}}
\nc{\Wak}{\on{Wak}}
\nc{\Av}{\on{Av}}
\nc{\semiinf}{{\frac{\infty}{2}}}
\nc{\DS}{\on{DS}}
\nc{\dR}{{\on{dR}}}
\nc{\Poinc}{{\on{Poinc}}}
\renc{\det}{\on{det}}
\nc{\oG}{\overset{\circ}{G}}
\nc{\Sectna}{\on{Sect}_\nabla}
\begin{document}

\dedicatory{To Sasha Beilinson}


\vskip1cm

\title[Proof of the the geometric Langlands conjecture I]{Proof of the geometric Langlands conjecture I: \\ construction of the functor}

\author{Dennis Gaitsgory and Sam Raskin}

\begin{abstract}
In this paper we construct the geometric Langlands functor in one direction (from the automorphic to the spectral side) in characteristic zero
settings (i.e., de Rham and Betti). 
We prove that various forms of the conjecture (de Rham vs Betti, restricted vs. non-restricted, tempered vs. non-tempered)
are equivalent. We also discuss structural properties of Hecke eigensheaves.
\end{abstract}

\date{\today}

\maketitle

\bigskip

\bigskip

\tableofcontents

\section*{Introduction}

This paper is the first in a series of five, in which the\footnote{Global and unramified.}
geometric Langlands conjecture will be proved. 

\medskip

The entire project is joint work of D.~Arinkin, D.~Beraldo, L.~Chen, J.~Faegerman, D.~Gaitsgory, K.~Lin, S.~Raskin and
N.~Rozenblyum. 

\medskip

The individual papers in the five-paper series will have different subsets of the above people as authors. 

\ssec{The Langlands functor in the de Rham setting}

We first consider the de Rham version of the geometric Langlands conjecture (GLC). 

\sssec{}

Let $k$ be a field of characteristic $0$. 
Let $X$ be a smooth and complete curve and $G$ a 
connected reductive group over $k$. Let $\cG$ be the Langlands dual of $G$, viewed also as a reductive group over $k$.

\medskip

The automorphic side of GLC is the category 
$$\Dmod_{\frac{1}{2}}(\Bun_G)$$
of half-twisted D-modules on the moduli stack $\Bun_G$ of principal $G$-bundles on $X$,
see \secref{ss:autom}.

\medskip

The spectral side is the category $$\IndCoh_\Nilp(\LS_\cG)$$ of ind-coherent sheaves with singular
support in the nilpotent cone (see \cite[Sect. 11.1.5]{AG}) on the moduli stack $\LS_\cG$ of
de Rham $\cG$-local systems on $X$.

\sssec{}

The Langlands functor, to be constructed in this paper, goes from the automorphic to the spectral side
\begin{equation} \label{e:L intro}
\BL_G:\Dmod_{\frac{1}{2}}(\Bun_G)\to \IndCoh_\Nilp(\LS_\cG). 
\end{equation} 

The geometric Langlands conjecture (GLC) in the de Rham setting says that the functor \eqref{e:L intro}
is an equivalence. 

\sssec{}

However, there is an important piece of structure that enters the game before we attempt to construct
$\BL_G$, namely, the \emph{spectral action}.

\medskip

The assertion is that the combined action of the \emph{Hecke functors} gives rise
to a uniquely defined monoidal action of $\QCoh(\LS_\cG)$ on $\Dmod_{\frac{1}{2}}(\Bun_G)$. 

\medskip

Such an action can be viewed as a ``spectral decomposition" of $\Dmod_{\frac{1}{2}}(\Bun_G)$
along the stack $\LS_\cG$. 

\medskip 

The existence of the spectral action was established by V.~Drinfeld and the first author (it is recorded 
in \cite[Corollary 4.5.5]{Ga2}), but its idea goes back to A.~Beilinson, in the form of what is called the
\emph{Beilinson projector} (see \cite[Sect. 15]{AGKRRV} for a detailed discussion).  

\sssec{}

The construction of $\BL_G$ proceeds in two steps: we first construct the \emph{coarse} version of $\BL_G$
$$\BL_{G,\on{coarse}}:\Dmod_{\frac{1}{2}}(\Bun_G)\to \QCoh(\LS_\cG).$$

It is related to the desired functor $\BL_G$ by
\begin{equation}\label{eq:psi-lg}
\Psi_{\Nilp,\{0\}}\circ \BL_G\simeq \BL_{G,\on{coarse}},
\end{equation}
where 
$$\Psi_{\Nilp,\{0\}}:\IndCoh_\Nilp(\LS_\cG)\to  \QCoh(\LS_\cG)$$
is the \emph{coarsening} functor. 

\sssec{}

The functor $\BL_{G,\on{coarse}}$ stems from the classical idea in the theory of automorphic functions
that says that Langlands correspondence is normalized by the Whittaker model. In the geometric context this 
idea is incarnated by the fact that the functor $\BL_{G,\on{coarse}}$ is uniquely determined by the following two
requirements:

\begin{itemize}

\item The functor $\BL_{G,\on{coarse}}$ is compatible with the Hecke action, i.e., it 
intertwines the action of $\QCoh(\LS_\cG)$ on the two sides;

\smallskip

\item It makes the triangle 
$$
\xy
(0,0)*+{\Dmod_{\frac{1}{2}}(\Bun_G)}="X";
(40,0)*+{\QCoh(\LS_\cG)}="Y";
(20,20)*+{\Vect}="Z";
{\ar@{->}^{\on{coeff}^{\on{Vac,glob}}} "X";"Z"};
{\ar@{->}_{\Gamma(\LS_\cG,-)} "Y";"Z"};
{\ar@{->}^{\BL_{G,\on{coarse}}} "X";"Y"}
\endxy
$$
commute, where $\on{coeff}^{\on{Vac,glob}}$ is the functor of \emph{first Whittaker coefficient}, see \secref{sss:coeff}. 

\end{itemize} 

\sssec{}

Once the functor $\BL_{G,\on{coarse}}$ is constructed, we lift it to the sought-for functor $\BL_G$ using
cohomological estimates provided by \thmref{t:compact to bdd below dr}.  

\medskip

Namely, \thmref{t:compact to bdd below dr} says that the functor $\BL_{G,\on{coarse}}$ sends compact objects of 
$\Dmod_{\frac{1}{2}}(\Bun_G)$ to \emph{eventually coconnective} objects in $\QCoh(\LS_\cG)$, i.e., objects that are
cohomologically bounded on the left.
As $\Psi_{\Nilp,\{0\}}$ is an equivalence on eventually coconnective subcategories,
we obtain the functor $\BL_G$, which is then uniquely characterized by
\eqref{eq:psi-lg} and the requirement that $\BL_G$ send compacts to eventually
coconnective objects.\footnote{We remark that the latter requirement is
necessarily satisfied by any putative geometric Langlands functor: 
such a functor ought to be an equivalence, so must preserve compacts;
and compact objects are eventually coconnective in $\IndCoh_{\Nilp}(\LS_\cG)$.}

\medskip

In fact, we show that the functor $\BL_{G,\on{coarse}}$ has a cohomological amplitude bounded on the left. (Note, however,
that this does \emph{not} mean that the Langlands functor $\BL_G$ has a cohomological amplitude bounded on the left; it rather has 
unbounded amplitude once $G$ is non-abelian.)

\medskip

Of less immediate practical importance, we also show that the functor $\BL_{G,\on{coarse}}$ has a 
cohomological amplitude bounded on the \emph{right}. This property is automatically inherited by $\BL_G$. 

\sssec{}

One can thus say that the upgrade 
\begin{equation} \label{e:upgrade}
\BL_{G,\on{coarse}}\rightsquigarrow \BL_G,
\end{equation}
carried out in the present paper, is cohomological in nature. Its main 
ingredient, \thmref{t:compact to bdd below dr}, 
uses non-trivial input: ultimately we deduce it from 
\thmref{t:L coarse left exact Nilp}, proved in \cite{FR} using methods developed
in \cite{AGKRRV} and \cite{Lin}.

\medskip

An alternative approach to constructing $\BL_G$ is being 
developed in separate work of D.~Beraldo, L.~Chen and K.~Lin. 
Their work is based on ``gluing" the category $\Dmod_{\frac{1}{2}}(\Bun_G)$
from the categories $\Dmod_{\frac{1}{2}}(\Bun_M)_{\on{temp}}$ (see \secref{sss:temp}) for Levi subgroups $M$ of $G$, using the
\emph{Eisenstein series} functors, see \cite{BeLi}.
Then they construct the functor using a parallel procedure 
on the spectral side, i.e., gluing 
$\IndCoh_\Nilp(\LS_\cG)$ from the categories $\QCoh(\LS_\cM)$, using the
\emph{spectral Eisenstein series} functors, developed in \cite{Be2}. Unlike the approach in the present
paper, their work pursues (and better develops) the original proposal 
from \cite{Ga2}\footnote{An additional merit of this approach is that, unlike using the definition of $\IndCoh(-)$ from scratch, i.e., as the ind-completion
of $\Coh(-)$, it uses the formal properties of the $\IndCoh$ category.}.

\ssec{The Betti setting}

We now consider GLC in the Betti setting. 

\sssec{}

In this case our algebraic geometry happens over the ground field $\BC$
(so $X$ and $G$ are over $\BC$), and we work with sheaves of $\sfe$-vector spaces, where $\sfe$ is an arbitrary
field of characteristic $0$. 

\medskip

So, $\cG$ is an reductive group over $\sfe$, and $\LS_\cG$ 
is an algebraic stack over $\sfe$.

\sssec{}

A historical and conceptual difference between the de Rham and Betti situations is that while it has ``always" 
(i.e., since the inception of the subject by A.~Beilinson and V.~Drinfeld) been 
understood what the automorphic side in the de Rham setting was (i.e., the entire category of half-twisted D-modules),
in the Betti setting it was a relatively recent discovery, due to D.~Ben-Zvi and D.~Nadler, see \cite{BZN}.  

\medskip

Namely, the entire category $\Shv^{\on{Betti}}_{\frac{1}{2}}(\Bun_G)$
of (twisted) Betti sheaves on $\Bun_G$ is too big to be equivalent
to something on the spectral side. What Ben-Zvi and Nadler discovered was that there is a reasonable automorphic candidate:
namely, this is the full category 
$$\Shv^{\on{Betti}}_{\frac{1}{2},\Nilp}(\Bun_G) \subset \Shv^{\on{Betti}}_{\frac{1}{2}}(\Bun_G)$$
of Betti sheaves with singular support in the global nilpotent cone. 

\sssec{}

The spectral side in the Betti setting must also be modified as compared to the de Rham setting,
but that modification is evident: one should replace the stack $\LS_\cG$ of de Rham local
systems on $X$ (i.e., the stack classifying $\cG$-bundles with a connection) by the stack $\LS^{\on{Betti}}_\cG$
of Betti $\cG$-local systems (i.e., if we ignore the derived structure, this is the stack of homomorphisms 
$\pi_1(X)\to \cG$, divided by the action of $\cG$ by conjugation). 

\medskip

In retrospect, knowing the spectral side, one can convince oneself that $\Shv^{\on{Betti}}_{\frac{1}{2},\Nilp}(\Bun_G)$ is the right candidate for the
automorphic category as follows: this is the largest subcategory of $\Shv^{\on{Betti}}_{\frac{1}{2}}(\Bun_G)$, on 
which the Hecke action gives rise to an action of $\QCoh(\LS_\cG^{\on{Betti}})$, see \cite[Theorem 18.1.4]{AGKRRV}. 

\sssec{}

With the candidates for the automorphic and spectral sides in place, the construction of the Langlands functor
in the Betti setting
\begin{equation} \label{e:L Betti intro}
\BL^{\on{Betti}}_G:\Shv^{\on{Betti}}_{\frac{1}{2},\Nilp}(\Bun_G) \to \IndCoh_\Nilp(\LS^{\on{Betti}}_\cG)
\end{equation} 
proceeds along the same lines as in the de Rham setting, with one modification: 

\medskip

The functor $\on{coeff}^{\on{Vac,glob}}$
in the de Rham setting appealed to the exponential D-module on $\BG_a$. There is (obviously) no Betti analog
of the exponential D-module, but one finds a
substitute using an additional group of symmetries (given by the torus $T$) acting in our situation, see
\secref{ss:Poinc Betti}. 

\sssec{}

The geometric Langlands conjecture (GLC) in the Betti setting says that the functor \eqref{e:L Betti intro}
is an equivalence. 

\ssec{Comparison of the different versions of GLC} \label{ss:comparison Intro} 

In addition to the de Rham (resp., Betti) versions of GLC, which we will refer to as \emph{full} GLC, in both settings
there exist other versions, which we refer to as \emph{restricted} and \emph{tempered}, respectively. And there exist
also the restricted tempered versions. 

\medskip

However, it turns out that all these versions are logically equivalent in the sense
that any one implies the others.

\sssec{}

The restricted version of GLC, in either de Rham or Betti versions, was introduced\footnote{Without specifying the functor in either
direction.} in \cite[Conjecture 21.2.7]{AGKRRV}. 

\medskip

On the automorphic side of the restricted GLC in the de Rham setting, we have the full subcategory
$$\Dmod_{\frac{1}{2},\Nilp}(\Bun_G)\subset \Dmod_{\frac{1}{2}}(\Bun_G)$$
that consists of (twisted) D-modules with singular support in the nilpotent cone $\Nilp\subset T^*(\Bun_G)$. 
Objects of this category are (obviously) ind-holonomic. However, one can show that actually
have regular singularities (this is a non-trivial result \cite[Corollary 16.5.6]{AGKRRV}).

\medskip

On the automorphic side of the restricted GLC in the Betti setting, we have the full subcategory 
$$\Shv^{\on{Betti,constr}}_{\frac{1}{2},\Nilp}(\Bun_G)\subset \Shv^{\on{Betti}}_{\frac{1}{2},\Nilp}(\Bun_G)$$
that consists of \emph{ind-constructible} sheaves.

\medskip

In fact, both categories 
$$\Dmod_{\frac{1}{2},\Nilp}(\Bun_G) \text{ and } \Shv^{\on{Betti,constr}}_{\frac{1}{2},\Nilp}(\Bun_G)$$
fall into the paradigm of \cite[Sect. 14.1]{AGKRRV}, where the restricted automorphic category is defined
starting from a \emph{constructible sheaf theory} $\Shv(-)$ as in \cite[Sect. 1.1.1]{AGKRRV}.
The two sheaf theories in question are
$$\Shv(-):=\Dmod^{\on{RS}}(-) \text{ and } \Shv(-):=\Shv^{\on{Betti,constr}}(-),$$
respectively.

\sssec{}

On the spectral side of the restricted GLC we have the categories 
\begin{equation} \label{e:spectral side restr Intro}
\IndCoh_\Nilp(\LS^{\on{restr}}_\cG) \text{ and } \IndCoh_\Nilp(\LS^{\on{Betti,restr}}_\cG),
\end{equation} 
respectively, where $\LS^{\on{restr}}_\cG$ and $\LS^{\on{Betti,restr}}_\cG$ are the de Rham and Betti 
versions of the \emph{stack of local systems with restricted variation} of \cite[Sect. 1.4]{AGKRRV}. 

\sssec{}

Starting from the full geometric Langlands functors $\BL_G$ and $\BL_G^{\on{Betti}}$ one produces their 
restricted versions
$$\Dmod_{\frac{1}{2},\Nilp}(\Bun_G) \overset{\BL_G^{\on{restr}}}\longrightarrow \IndCoh_\Nilp(\LS^{\on{restr}}_\cG)$$
and 
$$\Shv^{\on{Betti,constr}}_{\frac{1}{2},\Nilp}(\Bun_G) \overset{\BL_G^{\on{Betti,restr}}}\longrightarrow \IndCoh_\Nilp(\LS^{\on{Betti,restr}}_\cG).$$
by applying the operations
$$-\underset{\QCoh(\LS_\cG)}\otimes \QCoh(\LS_\cG)_{\on{restr}} \text{ and }
-\underset{\QCoh(\LS^{\on{Betti}}_\cG)}\otimes \QCoh(\LS^{\on{Betti}}_\cG)_{\on{restr}},$$
where
$$ \QCoh(\LS_\cG)_{\on{restr}} \subset \QCoh(\LS_\cG) \text{ and }
\QCoh(\LS^{\on{Betti}}_\cG)_{\on{restr}} \subset \QCoh(\LS^{\on{Betti}}_\cG)$$
are full subcategories that consist of objects set-theoretically supported on
$$\LS^{\on{restr}}_\cG\subset \LS_\cG \text{ and } \LS^{\on{Betti,restr}}_\cG\subset \LS^{\on{Betti}}_\cG,$$
respectively. 

\medskip

It is clear that if $\BL_G$ and $\BL_G^{\on{Betti}}$ are equivalences, then so are $\BL_G^{\on{restr}}$ and
$\BL_G^{\on{Betti,restr}}$. However, it turns out that the converse implications also take place.
This is proved in \secref{s:restricted vs full} of the present paper; see also \secref{sss:restr implies Intro} below. This material builds on 
\cite[Sect. 21.4]{AGKRRV}.

\sssec{}

Thus, we have:
$$\text{Full de Rham GLC}\, \Leftrightarrow\, \text{Restricted de Rham GLC}$$
and  
$$\text{Full Betti GLC}\, \Leftrightarrow\, \text{Restricted Betti GLC}.$$ 

\medskip

However, Riemann-Hilbert implies that we also have
$$\text{Restricted de Rham GLC}  \Leftrightarrow\, \text{Restricted Betti GLC}.$$

As a result, we deduce that
\begin{equation} \label{e:dR vs B}
\text{Full de Rham GLC}\, \Leftrightarrow\, \text{Full Betti GLC}.
\end{equation} 

\sssec{}

In the rest of this series of papers we will focus on the de Rham version of GLC. 

\medskip

This is because it is 
in this context that one can use Kac-Moody localization, which provides a powerful tool
in the study of $\Dmod_{\frac{1}{2}}(\Bun_G)$. It would not be an exaggeration to say that the proofs
of all the key results\footnote{Including the existence of the spectral action.} 
in this series of papers ultimately appeal to this method.

\medskip

However, thanks to \eqref{e:dR vs B}, once we prove the de Rham version of GLC, we 
deduce the Betti case as well. 

\sssec{} \label{sss:restr implies Intro}

Let us now comment on how we deduce 
$$\text{Restricted de Rham GLC}\, \Rightarrow \, \text{Full de Rham GLC}$$
(the Betti case is similar).

\medskip

To do so, we introduce yet another version of GLC: the tempered one. Namely, the category $\Dmod_{\frac{1}{2}}(\Bun_G)$
admits a localization
$$\bu:\Dmod_{\frac{1}{2}}(\Bun_G)_{\on{temp}} \rightleftarrows \Dmod_{\frac{1}{2}}(\Bun_G):\bu^R,$$
and similarly for its the restricted version
$$\bu:\Dmod_{\frac{1}{2},\Nilp}(\Bun_G)_{\on{temp}} \rightleftarrows \Dmod_{\frac{1}{2},\Nilp}(\Bun_G):\bu^R.$$

The tempered versions of the Langlands functor map
$$\Dmod_{\frac{1}{2}}(\Bun_G)_{\on{temp}}  \overset{\BL_{G,\on{temp}}}\longrightarrow \QCoh(\LS_\cG)$$
and 
$$\Dmod_{\frac{1}{2},\Nilp}(\Bun_G)_{\on{temp}}  \overset{\BL^{\on{restr}}_{G,\on{temp}}}\longrightarrow \QCoh(\LS^{\on{restr}}_\cG),$$
respectively.

\medskip

The tempered (resp., tempered restricted) version of GLC says that the functor $\BL_{G,\on{temp}}$ (resp., $\BL^{\on{restr}}_{G,\on{temp}}$)
is an equivalence. 

\sssec{}

It is clear that we have the implications
$$\text{Full GLC} \, \Rightarrow \, \text{Tempered GLC} \,\,\text{  and  }\,\, 
\text{Restricted GLC} \, \Rightarrow \, \text{Tempered restricted GLC}$$

In \secref{ss:temp to all} we establish an inverse implication
$$\text{Tempered GLC}  \, \Rightarrow \,  \text{Full GLC}.$$

(A similar argument shows that $\text{Tempered restricted GLC} \, \Rightarrow \, \text{Restricted GLC}$.) 

\sssec{}

It is also clear that
$$\text{Tempered GLC}  \, \Rightarrow \, \text{Tempered restricted GLC}.$$

However, by mimicking the argument of \cite[Sect. 21.4]{AGKRRV} one proves that we in fact have 
$$\text{Tempered GLC}  \, \Leftrightarrow \, \text{Tempered restricted GLC}.$$

\begin{rem}
In fact, in \cite[Sect. 21.4]{AGKRRV} the equivalence 
$$\text{Full GLC}\, \Leftrightarrow\, \text{Restricted GLC}$$
was proved directly, but under the assumption that the functor $\BL_G$ admits a left adjoint. 

\medskip

The advantage of working with the tempered category is that the existence of the left adjoint 
of $\BL_{G,\on{temp}}$ is automatic from the construction.

\end{rem} 

\ssec{Characteristic cycles of Hecke eigensheaves}

We prove an additional result  the interaction between
de Rham and Betti geometric Langlands.

\begin{thm}\label{t:cc-intro}

Suppose that $G$ has connected center, the genus of $X$ is $\geq 2$, and 
assume the geometric Langlands conjecture. Let $\sigma$ be an irreducible
$\cG$-local system and let $\CF_{\sigma}$ be the corresponding Hecke eigensheaf.

Then the characteristic cycle $\on{CC}(\CF_{\sigma})$ of $\CF_{\sigma}$ equals
the global nilpotent cone.

\end{thm}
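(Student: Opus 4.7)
The plan is to prove the identity in three steps: first, to constrain $\on{CC}(\CF_\sigma)$ to be a non-negative integer combination of the irreducible components $\Nilp_i$ of $\Nilp$; next, to show all coefficients coincide; finally, to pin down the common value as $1$.

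\emph{Support in the nilpotent cone.} Assuming GLC, $\BL_G$ is an equivalence. The hypotheses that $Z(G)$ is connected and $g \geq 2$ imply that an irreducible $\cG$-local system $\sigma$ is an isolated (stable) point of $\LS_\cG$, so the skyscraper $i_{\sigma,*}(k_\sigma)$ belongs to $\IndCoh_\Nilp(\LS_\cG)$. Its preimage $\CF_\sigma$ under $\BL_G$ therefore lies in $\Dmod_{\frac{1}{2},\Nilp}(\Bun_G)$, giving $\on{SS}(\CF_\sigma) \subseteq \Nilp$. Consequently $\CF_\sigma$ is holonomic and $\on{CC}(\CF_\sigma) = \sum_i n_i [\Nilp_i]$ with $n_i \in \ZZ_{\geq 0}$.

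\emph{Equality of multiplicities.} For $V \in \Rep(\cG)$ the Hecke eigensheaf property gives $H^V(\CF_\sigma) \simeq \CF_\sigma \boxtimes V_\sigma$, where $V_\sigma$ is the local system on $X$ attached to $(\sigma, V)$. Taking characteristic cycles, the right-hand side contributes $\rk(V) \cdot \on{CC}(\CF_\sigma) \boxtimes [X]$, while the left-hand side transports $\on{CC}(\CF_\sigma)$ through the Lagrangian correspondence on cotangent bundles induced by $\on{Hecke}^V \to \Bun_G \times \Bun_G \times X$. As $V$ varies, these correspondences permute the irreducible components of $\Nilp$, and the key input is the transitivity of the induced action on components --- a statement which (in spirit) derives from the connectedness of $\cG$ and the irreducibility of the relevant Steinberg-type varieties. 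This forces $n_i = n_j$ for all $i,j$, so $\on{CC}(\CF_\sigma) = n \cdot [\Nilp]$ for a single positive integer $n$.

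\emph{Normalization.} To show $n = 1$, restrict to the open locus $\Bun_G^{\on{vs}}$ of very stable bundles, where $\Nilp$ meets $T^*\Bun_G$ only along the zero section; thus $\CF_\sigma|_{\Bun_G^{\on{vs}}}$ is a local system of rank $n$. The Whittaker normalization of $\BL_G$ --- the commutativity of the triangle relating $\on{coeff}^{\on{Vac,glob}}$, $\Gamma(\LS_\cG, -)$, and $\BL_{G,\on{coarse}}$ --- applied to the skyscraper at $\sigma$ produces a one-dimensional Whittaker coefficient, and tracing this through the generic rank computation forces $n = 1$.

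The main obstacle is the middle step. Making the Hecke-correspondence action on irreducible components of $\Nilp$ rigorous at the level of characteristic cycles on the stack $\Bun_G$, and verifying the required transitivity, is non-trivial; it demands either a Kashiwara-type microlocal index calculus extended to stacks, or a geometric parametrization of the components of $\Nilp$ (for instance via global Springer theory or the Hitchin fibration) on which the Hecke groupoid acts with an evidently transitive combinatorial action. The role of the hypotheses $g \geq 2$ and connected center is felt precisely here, ensuring both that $\sigma$ is stable and that the components of $\Nilp$ have a sufficiently rich Hecke orbit structure.
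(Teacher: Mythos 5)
Your proposal takes a genuinely different route from the paper's, but two of your three steps have real problems, one of which is a fatal error rather than a mere gap.

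\textbf{The target cycle.} You write $\on{CC}(\CF_\sigma) = \sum_i n_i[\Nilp_i]$ and then aim to show $n_i = n_j = 1$. But the cycle $[\Nilp]$ in the statement is the \emph{scheme-theoretic} cycle attached to $\Nilp$ viewed as the derived zero fiber of the Hitchin map, so $[\Nilp] = \sum_i m_i[\Nilp_i]$ where the $m_i$ are generically \emph{not} equal to $1$ and not equal to each other. In particular the multiplicity along the zero section is $\prod_{d_i \geq 2} d_i^{(2d_i-1)(g-1)}$ (with $d_i$ the exponents of $G$), which is the generic rank of $\CF_\sigma$. Your step 3 claims $\CF_\sigma$ has generic rank $1$ by tracing the one-dimensional Whittaker coefficient through the very stable locus; this is false, and the Whittaker coefficient (a global sections computation over $\Bun_{N,\rho(\omega_X)}$) does not control the rank of $\CF_\sigma$ at a generic point of $\Bun_G$. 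The correct normalization is much subtler and is supplied in the paper by citing Beilinson--Drinfeld's construction of Hecke eigensheaves: they produce, by localizing Kac--Moody representations at the critical level, an irreducible $\sigma$ in the neutral component for which $\on{CC}(\CF_\sigma) = [\Nilp]$ exactly (Prop. 5.1.2 of their preprint). There is no soft Whittaker argument that recovers this.

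\textbf{Equality of multiplicities.} You acknowledge your middle step is the "main obstacle," and indeed the Hecke-transitivity assertion you invoke — that the Lagrangian correspondences act transitively on the irreducible components of $\Nilp$ — is not established, and as noted above the conclusion $n_i = n_j$ would in any case be the wrong thing to prove. The paper instead proves a \emph{continuity} statement: passing to the Betti setting, the order of $\on{CC}(\CF_\sigma)$ at a smooth point $\xi$ of a component $\Nilp^\alpha$ is computed as the rank of the perfect complex $\CE_\xi := \BL_G^{\on{Betti}}(\mu\delta_\xi)$ at the point $\sigma$, where $\mu\delta_\xi$ corepresents the microstalk functor at $\xi$ (Theorem \ref{t:microstalk}). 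Because $\mu\delta_\xi$ is compact, $\CE_\xi$ restricted to the irreducible locus is a vector bundle, so $\sigma \mapsto \on{ord}_\xi(\on{CC}(\CF_\sigma))$ is locally constant on $\LS_\cG^{\on{Betti,irred}}$. Combined with the fact that $\LS_\cG$ is irreducible when $\cG$ has simply-connected derived group and $g \geq 2$ (this is where the connected-center hypothesis enters), and with the Beilinson--Drinfeld anchor point, the result follows. This "propagate a known value along a connected moduli" strategy entirely replaces the Hecke-correspondence microlocal calculus you propose, and it is essential that the propagation happens in the Betti world where the corepresentability of microstalks is available.
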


We prove this result via the interaction of Betti and de Rham geometric Langlands,
and it is closely related to perversity properties of $\BL_{G,\on{temp}}$
used elsewhere in the paper, 
which is why we consider it relevant to this paper.

\medskip 

We refer to \secref{s:eigensheaves} for more details, including a review
of other known properties of $\CF_{\sigma}$.

%

\ssec{Contents}

We now review the contents of the paper by section.

\sssec{}

In \secref{s:functor} we construct the Langlands functor $\BL_G$ in the
de Rham context.

\medskip

We first collect the ingredients needed for the construction of the 
\emph{coarse} Langlands functor (the spectral action, the vacuum Poincar\'e object).

\medskip

We state \thmref{t:compact to bdd below dr}, which is the tool that allows us to upgrade 
$\BL_{G,\on{coarse}}$ to the Langlands functor $\BL_G$. 

\medskip

We show that $\BL_G$ is compatible with the actions of $\QCoh(\LS_\cG)$ on the two sides. 

\sssec{}

In \secref{s:proof of dr} we prove \thmref{t:compact to bdd below dr}.

\medskip

We show that compact objects of $\Dmod_{\frac{1}{2}}(\Bun_G)$ are bounded below (in fact, this
is a general property of the category of D-modules on a \emph{truncatable} algebraic stack with an
affine diagonal), see \propref{c:bdd right}.  This reduces \thmref{t:compact to bdd below dr} to
\thmref{t:L coarse left exact}, which says that the functor $\BL_{G,\on{coarse}}$ has a cohomological amplitude bounded 
on the left.

\medskip

We express the condition on an object of $\QCoh(\CY)$ (here $\CY$ is an arbitrary eventually coconnective stack)
to be bounded on the left in terms of its !-fibers, see \propref{p:when >0}. 

\medskip

The latter proposition allows us to reduce \thmref{t:L coarse left exact} to its version, where instead of $\BL_{G,\on{coarse}}$,
we are dealing with its restricted version, $\BL^{\on{restr}}_{G,\on{coarse}}$. However, in the latter case, the required assertion has
been already established in \cite{FR}. 

\medskip

By a similar manipulation, we prove \propref{p:bdd right}, which says that the functor $\BL_{G,\on{coarse}}$ (and, hence, $\BL_G$)
has a cohomological amplitude bounded on the right. 

\sssec{}

In \secref{s:Betti} we construct the geometric Langlands functor $\BL_G$ in the Betti setting.

\medskip

We first review the basics of the category $\Shv^{\on{Betti}}_{\frac{1}{2},\Nilp}(\Bun_G)$
(compact generation, relation to the entire category $\Shv^{\on{Betti}}_{\frac{1}{2}}(\Bun_G)$).

\medskip

We construct the Betti version of the vacuum Poincar\'e object, which requires a little bit of work,
due to the fact that the exponential sheaf does not exist in the Betti context.

\medskip

We state \thmref{t:compact to bdd below Betti}, which is a Betti counterpart of \thmref{t:compact to bdd below dr}.
Assuming this theorem, we construct $\BL_G^{\on{Betti}}$, the Betti version of the Langlands functor, by the
same procedure as in the de Rham context.

\medskip

We state \thmref{t:dR => Betti bis}, which says that the de Rham and Betti versions of GLC (along with their
restricted variants) are all logically equivalent.

\sssec{}

In \secref{s:proof of Betti >0}, we prove \thmref{t:compact to bdd below Betti}. 

\medskip

We note that the proof of \thmref{t:compact to bdd below Betti} does \emph{not} mimic that of 
\thmref{t:compact to bdd below dr}:

\medskip

We do not a priori know that the compact generators of 
$\Shv^{\on{Betti}}_{\frac{1}{2},\Nilp}(\Bun_G)$ are bounded below (although ultimately one can show that they are). 
Instead, we describe these compact generators explicitly and show directly that the functor 
$\BL_{G,\on{coarse}}^{\on{Betti}}$ sends them to bounded below objects. While doing so,
we study the interactions between the following functors:

\smallskip

\begin{itemize}

\item The \emph{left} adjoint to $\bi:\Shv^{\on{Betti}}_{\frac{1}{2},\Nilp}(\Bun_G)\hookrightarrow \Shv^{\on{Betti}}_{\frac{1}{2}}(\Bun_G)$;

\item The \emph{right} adjoint to $\iota^{\on{Betti}}:\Shv^{\on{Betti,constr}}_{\frac{1}{2},\Nilp}(\Bun_G)\hookrightarrow \Shv^{\on{Betti}}_{\frac{1}{2},\Nilp}(\Bun_G)$;

\item The \emph{right} adjoint to 
$\bi^{\on{constr}}:\Shv^{\on{Betti,contsr}}_{\frac{1}{2},\Nilp}(\Bun_G)\hookrightarrow \Shv^{\on{Betti,constr}}_{\frac{1}{2}}(\Bun_G)$.

\end{itemize}

\medskip

In fact, it turns out that $(\bi^{\on{constr}})^R \simeq (\iota^{Betti})^R\circ \bi\circ \oblv^{\on{constr}}$, as functors
$$\Shv^{\on{Betti,constr}}_{\frac{1}{2}}(\Bun_G)\to \Shv^{\on{Betti,constr}}_{\frac{1}{2},\Nilp}(\Bun_G),$$
where $\oblv^{\on{constr}}$ is the forgetful functor
$$\Shv^{\on{Betti,constr}}_{\frac{1}{2}}(\Bun_G)\to \Shv^{\on{Betti}}_{\frac{1}{2}}(\Bun_G).$$

\sssec{}

In \secref{s:restricted vs full}, we prove the implications
$$\text{Restricted de Rham GLC}\, \Rightarrow \, \text{Full de Rham GLC}\,\,\, \text{ and }\,\,\, 
\text{Restricted Betti GLC}\, \Rightarrow \, \text{Full Betti GLC}$$
along the lines explained in \secref{ss:comparison Intro}.

\medskip

We introduce the tempered GLC, in both full and restricted variants, and show, following \cite[Sect. 21.4]{AGKRRV}
that these two variants are logically equivalent.

\medskip

We then show that the tempered GLC is equivalent to the original full GLC.  

\sssec{} 

In \secref{s:eigensheaves}, we calculate characteristic cycles of Hecke eigensheaves
(at least when $G$ has connected center). We do this by proving constancy
of characteristic cycles along the moduli of irreducible local systems
and then appealing to \cite{BD} to compute this constant value.

\sssec{Notations and conventions}

Notations and conventions in this paper largely follow those adopted in \cite{AGKRRV}. 

\ssec{Acknowledgements}

We wish to thank our collaborators on this five paper series: D.~Arinkin, D.~Beraldo, L.~Chen, J.~Faegerman, K.~Lin
and N.~Rozenblyum. 

\medskip

Special thanks are due to A.~Beilinson and V.~Drinfeld, who initiated the study of the geometric Langlands
phenomenon in the context of D-modules, and produced what is still the most significant piece of work on the
subject to date, namely, \cite{BD}. In addition, the idea of Langlands correspondence as an equivalence of
categories, generalizing Fourier-Laumon transform, is also due to them. 

\medskip

We also wish to express our gratitude to the following mathematicians, whose work was crucial for the development of 
the field and whose ideas shaped our understanding of the subject: 
D.~Ben-Zvi, J.~Bernstein, R.~Bezrukavnikov, A.~Braverman, J.~Campbell, P.~Deligne,
R.~Donagi, G.~Dhillon, L.~Fargues, B.~Feigin, M.~Finkelberg, E.~Frenkel, D.~Gaiotto, A.~Genestier, V.~Ginzburg,
S.~Gukov, J.~Heinloth, A.~Kapustin, D.~Kazhdan,
V.~Lafforgue, G.~Laumon, G.~Lusztig, S.~Lysenko, I.~Mirkovi\'c, D.~Nadler, T.~Pantev, P.~Scholze,
C.~Teleman, Y.~Varshavsky, K.~Vilonen, E.~Witten, C.~Xue, Z.~Yun and X.~Zhu.

\medskip

We dedicate this paper with love to Sasha Beilinson, who introduced us both
to this subject, and who suggested long ago that the 
geometric Langlands functor ought to take a simple form.

\medskip 

The work of D.G. was supported by NSF grant DMS-2005475. 
The work of S.R. was supported by NSF grant DMS-2101984 and a Sloan Research Fellowship 
while this work was in preparation.

\section{Construction of the Langlands functor (de Rham context)} \label{s:functor}

In this section we will first review the ingredients that are needed for the construction of the Langlands functor,
and then carry out the construction in question, all in the context of D-modules. 

\medskip

The ``coarse" version of the functor will be automatic from the spectral action (see \secref{ss:coarse}). 
To upgrade it to the actual Langlands functor, we will need some cohomological estimates, which are
provided by \thmref{t:compact to bdd below dr}. 

\ssec{The automorphic category} \label{ss:autom}

In this subsection we will introduce the main player on the automorphic side: the category $\Dmod_{\frac{1}{2}}(\Bun_G)$
of (half-twisted) D-modules on the moduli space $\Bun_G$ of $G$-bundles on $X$. 

\sssec{} \label{sss:crit 1/2 can}

Let $\det_{\Bun_G}$ be the determinant line bundle on $\Bun_G$, normalized so that it sends a $G$-bundle $\CP_G$ 
to 
$$\det\left(\Gamma(X,\fg_{\CP_G})\right) \otimes \det\left(\Gamma(X,\fg_{\CP^0_G})\right)^{\otimes -1},$$
where $\CP^0_G$ is the trivial bundle.

\medskip

Note also that up to the (constant) line $\det\left(\Gamma(X,\fg_{\CP^0_G})\right)$, the line bundle $\det_{\Bun_G}$
identifies with the canonical line bundle on $\Bun_G$.

\sssec{} \label{sss:pfaff}

According to \cite[Sect. 4]{BD}, the choice of $\omega^{\otimes \frac{1}{2}}_X$ gives rise to a square root of $\det_{\Bun_G}$.

\sssec{}

Let $\CY$ be a space, and let $\CG$ be an \'etale $\mu_n$-gerbe on $\CY$ for some $n\in \BN$.
Denote by 
$$\Dmod_\CG(\CY)$$
the corresponding category of twisted D-modules on $\CY$.

\sssec{} \label{sss:autom categ}

Let $$\Dmod_{\frac{1}{2}}(\Bun_G)$$
be the twisted category of D-modules, corresponding to the $\mu_2$-gerbe $\det^{\frac{1}{2}}_{\Bun_G}$ of square roots of $\det_{\Bun_G}$. 

\medskip

The category $\Dmod_{\frac{1}{2}}(\Bun_G)$ is the de Rham incarnation of 
the automorphic category. It is the primary object
of study in the geometric Langlands theory.  

\sssec{}

Note, however, that by \secref{sss:pfaff}, we have an identification
\begin{equation} \label{e:untwist}
\Dmod_{\frac{1}{2}}(\Bun_G)\simeq \Dmod(\Bun_G).
\end{equation}

Such an identification can be convenient for the study of local properties of the category 
$\Dmod_{\frac{1}{2}}(\Bun_G)$. In particular, the results from \cite{AGKRRV} pertaining
to $\Dmod_\Nilp(\Bun_G)$ apply to the corresponding category 
$$\Dmod_{\frac{1}{2},\Nilp}(\Bun_G).$$

\begin{rem}
It is crucial, however, to consider the twisted version, when we study the Hecke functors. 
Otherwise, instead of the usual category $\Rep(\cG)$ one has to consider its twist by a 
certain canonical $Z_\cG$-gerbe, see \cite[Sect. 6.3]{GLys}.
\end{rem} 

\ssec{The spectral action} \label{ss:spectral}

An ingredient, whch is crucial for both for the construction of the Langlands functor and to the eventual proof
that it is an equivalence, is the fact that the action of Hecke functors on $\Dmod_{\frac{1}{2}}(\Bun_G)$
gives rise to a monoidal action of the category $\QCoh(\LS_\cG)$.

\sssec{}

Let $\Rep(\cG)_\Ran$ be the de Rham version of the category $\Rep(\cG)$ spread over
the Ran space, see \cite[Sect. 4.2]{Ga2}.

\medskip

The (naive) geometric Satake functor and the Hecke action 
define an action of $\Rep(\cG)_\Ran$ on $\Dmod_{\frac{1}{2}}(\Bun_G)$.

\sssec{}

Recall now that we have a (symmetric) monoidal localization functor
\begin{equation} \label{e:spec Loc}
\Loc_\cG^{\on{spec}}:\Rep(\cG)\to \QCoh(\LS_\cG),
\end{equation} 
see \cite[Sect. 4.3]{Ga2}. 

\medskip

The category $\Rep(\cG)_\Ran$ is compactly generated, and the functor $\Loc_\cG^{\on{spec}}$
preserves compactness (being a symmetric monoidal functor between rigid symmetric monoidal
categories). Hence, it admits a continuous right adjoint.

\medskip

It is a basic fact that this right adjoint is fully faithful.
I.e., the functor $\Loc_\cG^{\on{spec}}$ is a localization. 

\sssec{}

We have the following result (see \cite[Corollary 4.5.5]{Ga2}):

\begin{thm} \label{t:spectral decomp}
The action of $\Rep(\cG)_\Ran$ on $\Dmod_{\frac{1}{2}}(\Bun_G)$ 
factors via the functor \eqref{e:spec Loc}.
\end{thm}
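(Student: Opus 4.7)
The plan is to exploit that $\Loc_\cG^{\on{spec}}$ is a symmetric monoidal localization. Since its continuous right adjoint is fully faithful (as noted in the excerpt just above the theorem), the category $\QCoh(\LS_\cG)$ is the localization of $\Rep(\cG)_\Ran$ at a certain class $W$ of morphisms; showing that the Hecke action of $\Rep(\cG)_\Ran$ on $\Dmod_{\frac{1}{2}}(\Bun_G)$ factors through $\QCoh(\LS_\cG)$ is equivalent to showing that the action inverts all morphisms in $W$. Equivalently, one shows that $\Loc_\cG^{\on{spec}}$ is an epimorphism of symmetric monoidal categories in an appropriate sense, and that the Hecke action descends.

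To identify $W$ intrinsically, I would use the factorization description of $\Rep(\cG)_\Ran$. This category is built from $\Rep(\cG)$ by spreading over the Ran space, with monoidal structure given by star-product. The passage to $\QCoh(\LS_\cG)$ imposes that (a) for disjoint partitions $\ul{x} = \ul{x}_1 \sqcup \ul{x}_2$ the action at $\ul{x}$ agrees with the external product of the actions at $\ul{x}_1$ and $\ul{x}_2$, and (b) inserting the trivial representation at an additional point is an equivalence. These factorization and unital morphisms generate $W$. This is essentially Beilinson's description of $\QCoh(\LS_\cG)$ as the ``factorization-invariant'' quotient, and is what makes the Beilinson projector available.

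The key geometric input is then that the Hecke action on $\Dmod_{\frac{1}{2}}(\Bun_G)$ is already factorizable and unital. The factorization structure of the Beilinson-Drinfeld affine Grassmannian $\Gr_{G,\Ran}$ together with the fusion version of geometric Satake provide the external product compatibility: convolution of spherical sheaves supported at disjoint points computes as their external product. The factorization of $\Bun_G$ (e.g.\ via Beauville-Laszlo uniformization) then transports this compatibility to the Hecke action itself, and the standard isomorphism of the Hecke action with the trivial representation on $\Dmod_{\frac{1}{2}}(\Bun_G)$ with the identity functor supplies the unitality.

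The main obstacle I foresee is the coherent bookkeeping: one must verify that the factorization isomorphisms on $\Dmod_{\frac{1}{2}}(\Bun_G)$ assemble into a genuine $(\infty,1)$-categorical datum compatible with the higher associativity and unitality required to upgrade the action to an action of $\QCoh(\LS_\cG)$. This is precisely the technical content encoded in the formalism of factorization categories and their modules developed in \cite[Sect. 4]{Ga2}, and it is the step that requires a substantial amount of infrastructure rather than any deep new geometric input.
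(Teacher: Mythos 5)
The paper does not supply its own proof of this theorem; it simply cites \cite[Corollary 4.5.5]{Ga2}, so there is no argument in the present text to compare against. What follows evaluates your plan against the cited proof.

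Your top-level framing---that $\Loc_\cG^{\on{spec}}$ is a localization and one must show the Hecke action inverts the class $W$ of morphisms it inverts---is sound, and your emphasis on unitality (item (b)) lines up with the actual crux in \cite{Ga2}. But item (a) is a red herring for identifying $W$: factorization is built into the definition of $\Rep(\cG)_\Ran$ as a factorization category, so \emph{every} $\Rep(\cG)_\Ran$-module (the regular representation included) has compatible external products over disjoint loci. Since the regular representation of $\Rep(\cG)_\Ran$ on itself does not descend to a $\QCoh(\LS_\cG)$-module, factorization cannot be among the conditions that single out the modules which do. What distinguishes $\Dmod_{\frac{1}{2}}(\Bun_G)$ is the unitality \emph{of the module}: that inserting the trivial representation at extra points acts as an equivalence, uniformly over $\Ran$. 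This is genuinely an extra condition on the module, not a structural feature of $\Rep(\cG)_\Ran$.

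More importantly, the proposal does not identify why module-unitality forces the action to descend through $\Loc_\cG^{\on{spec}}$. In \cite{Ga2} this is where the non-formal geometric content lives: $\QCoh(\LS_\cG)$ is realized as a relative tensor product of $\Rep(\cG)_\Ran$ over $\Dmod(\Ran)$ against $\Vect$ via the chiral/factorization homology functor, and the descent for unital modules rests on the contractibility of the (unital) Ran space---equivalently, on the triviality of the chiral homology of the unit factorization D-module. This is a theorem of Beilinson--Drinfeld/Gaitsgory, not ``coherent bookkeeping.'' Your closing paragraph therefore names the wrong obstacle: the $\infty$-categorical infrastructure is substantial but standard, whereas the Ran-contractibility input is the step that gives the theorem its content, and it is absent from the plan.
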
 

\sssec{} \label{sss:spectral action}

Thanks to \thmref{t:spectral decomp}, we obtain a canonically defined action of the monoidal category $\QCoh(\LS_\cG)$
on $\Dmod_{\frac{1}{2}}(\Bun_G)$. 

\medskip

We refer to it as the \emph{spectral action}. 

%
%
%
%
%

\ssec{The vacuum Poincar\'e sheaf}

The second ingredient for the construction of the Langlands functor $\BL_G$ is the object of 
$\Dmod_{\frac{1}{2}}(\Bun_G)$, which is the image of $\CO_{\LS_\cG}$ under the 
would-be left adjoint $\BL_G^L$ of $\BL_G$. 

\medskip

This object is the \emph{vacuum} Poincar\'e sheaf, introduced in this subsection. This particular
choice for the image of $\CO_{\LS_\cG}$ reflects the idea, familiar from the classical theory of
automorphic functions, that the Langlands correspondence is normalized by the Whittaker model. 

\sssec{} \label{sss:crit to BunN}

Let $\CP_T$ be any $T$-bundle. Consider the stack
$$\Bun_{N,\CP_T}\simeq \Bun_B\underset{\Bun_T}\times \on{pt},$$
where the map $\on{pt}\to \Bun_T$ corresponds the point $\CP_T$. Denote by $\fp$ the map
$$\Bun_{N,\CP_T}\to \Bun_G.$$
Note that the pullback of $\det_{\Bun_G}$ along this map is canonically constant. 
Denote the resulting line by
$$\fl_{G,N_{\CP_T}}.$$

\sssec{}

The Langlands functor depends on the choice of the square root $\omega_X^{\frac{1}{2}}$
of the canonical line bundle on $X$, which we fix from now on. With this choice we claim:

\begin{prop} \label{p:det to BunN}
The line $\fl_{G,N_{\CP_T}}$ admits a canonical 
square root. 
\end{prop}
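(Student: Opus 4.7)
The plan is to exploit the 3-step filtration $0\subset \fn\subset \fb\subset \fg$ by $B$-subrepresentations to decompose $\fl_{G,N_{\CP_T}}$ as a tensor product of contributions indexed by positive roots, for each of which a canonical square root is furnished by Deligne's pairing with $\omega_X^{1/2}$. Concretely, for $\CP_B\in \Bun_{N,\CP_T}$ with induced $G$-bundle $\CP_G$, the bundle $\fg_{\CP_G}$ inherits a 3-step filtration whose associated graded is $\fn_{\CP_T}\oplus(\ft\otimes \O_X)\oplus \fn^-_{\CP_T}$ (the middle piece being trivial because $T$ acts trivially on $\ft$). By multiplicativity of $\det R\Gamma$ along a filtration, and after cancelling the $\ft$-contribution from $\det R\Gamma(X,\fg_{\CP_G^0})$ in the denominator, one obtains a canonical isomorphism
\[
\fl_{G,N_{\CP_T}}\simeq \bigotimes_{\alpha>0}\bigl(\det R\Gamma(X,\alpha(\CP_T))\otimes \det R\Gamma(X,\alpha(\CP_T)^{-1})\bigr)\otimes \bigotimes_{\alpha>0}\det R\Gamma(X,\O_X)^{\otimes -2}.
\]
This also reconfirms the constancy of the pullback asserted just before the proposition.

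Next, for a single line bundle $\CL$ on $X$, a canonical square root of $\det R\Gamma(X,\CL)\otimes \det R\Gamma(X,\CL^{-1})$ is constructed as follows. Serre duality identifies $\det R\Gamma(X,\CL^{-1})\simeq \det R\Gamma(X,\CL\otimes \omega_X)$, and Deligne's pairing formula, combined with the identity $\det R\Gamma(X,\omega_X)\simeq \det R\Gamma(X,\O_X)$ (again by Serre duality), yields
\[
\det R\Gamma(X,\CL)\otimes \det R\Gamma(X,\CL^{-1})\simeq \det R\Gamma(X,\CL)^{\otimes 2}\otimes \langle \CL,\omega_X\rangle.
\]
Bilinearity of Deligne's pairing gives $\langle \CL,\omega_X\rangle = \langle \CL,\omega_X^{1/2}\rangle^{\otimes 2}$, so the canonical square root is $\det R\Gamma(X,\CL)\otimes \langle \CL,\omega_X^{1/2}\rangle$. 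Applying this to each $\CL=\alpha(\CP_T)$ for $\alpha>0$, and using $\det R\Gamma(X,\O_X)^{\otimes -1}$ as the obvious square root of $\det R\Gamma(X,\O_X)^{\otimes -2}$, assembles the desired canonical square root
\[
\fl_{G,N_{\CP_T}}^{1/2}\simeq \bigotimes_{\alpha>0}\bigl(\det R\Gamma(X,\alpha(\CP_T))\otimes \langle \alpha(\CP_T),\omega_X^{1/2}\rangle\otimes \det R\Gamma(X,\O_X)^{\otimes -1}\bigr).
\]

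The main obstacle is the bookkeeping: one must verify that the sign conventions in Serre duality and the normalizations of Deligne's pairing compose to give a genuinely canonical square root rather than one defined only up to a $\mu_2$-ambiguity. A conceptually cleaner alternative is to pull back the Beilinson--Drinfeld Pfaffian square root of $\det_{\Bun_G}$ (cited just before the proposition) to $\Bun_{N,\CP_T}$ and to use the filtration argument above to verify directly that this pullback is constant, thereby reducing the question to the already-constructed global square root.
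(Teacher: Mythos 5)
Your proof is correct and takes essentially the same route as the paper's: both reduce to the observation that, by the determinant-of-cohomology multiplicativity along the parabolic filtration, the line in question is the discrepancy between $\fn_P$ and its Killing-form dual $\fn^-_P$, and that twisting by $\omega_X^{1/2}$ together with Serre duality and the Deligne/Weil pairing produces the square root. The main difference is one of generality: the paper's proof works with $\fn_P$ as a single vector bundle (using the Killing pairing $\fn_P\simeq(\fn^-_P)^\vee$ directly), so the argument is $T$-independent and applies verbatim to any parabolic $P$ and any $M$-bundle $\CP_M$ — a generality the paper explicitly states and uses. Your version decomposes $\fn$ into $T$-weight lines and applies the line-bundle identity $\det R\Gamma(X,\CL)\otimes\det R\Gamma(X,\CL^{-1})\simeq(\det R\Gamma(X,\CL)\otimes\langle\CL,\omega_X^{1/2}\rangle)^{\otimes 2}$ root-by-root; this is exactly the paper's formula \eqref{e:det formula} specialized to $\on{rk}(\CE)=1$, and your per-root square root coincides with the paper's $\det R\Gamma(X,\alpha(\CP_T)\otimes\omega_X^{1/2})\otimes\det R\Gamma(X,\omega_X^{1/2})^{-1}$ after rearranging the $\det R\Gamma(X,\CO_X)$ factors, so the two constructions agree. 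The point you flag about sign normalizations and a potential $\mu_2$-ambiguity (e.g., from the ordering of positive roots in the root decomposition, or from the ordering of $\fn_P$, $\fm$, $\fn^-_P$ in the paper's version) is a genuine caveat that both proofs gloss over, but it does not affect the existence of a canonical square root as a line. Your suggested alternative — pulling back the Beilinson–Drinfeld Pfaffian and checking constancy on $\Bun_{N,\CP_T}$ — is clean but not what the paper does; note that you would still need an argument (e.g. $\Pic$ of $\Bun_{N,\CP_T}$ being trivial, or your filtration argument) to show the pullback of a square root of a constant line is itself constant, since a priori you only know its square is constant.
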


\begin{proof} 

We claim more generally that for any parabolic $P$ with Levi quotient $M$ and an $M$-bundle $\CP_M$, line 
$$\det_{\Bun_G}|_{\CP_M}\otimes \det^{\otimes -1}_{\Bun_M}|_{\CP_M}$$
admits a canonical square root.

\medskip

Let $P$ and $P^-$ be the positive and negative parabolics corresponding to $M$, and let
$N_P$ and $N^-_P$ be their respective uniptotent radicals. 

\medskip

By definition,
\begin{multline} \label{e:rel det}
\det_{\Bun_G}|_{\CP_M}\otimes \det^{\otimes -1}_{\Bun_M}|_{\CP_M}\simeq \\
\simeq \det(\Gamma(X,(\fn_P)_{\CP_M})\otimes \det(\Gamma(X,(\fn^-_P)_{\CP_M}))^{\otimes -1}
\otimes \det(\Gamma(X,\fn_P\otimes \CO_X))^{\otimes -1}\otimes \det(\Gamma(X,\fn^-_P\otimes \CO_X)).
\end{multline} 

We claim that the lines
\begin{equation} \label{e:rel det 1}
\det(\Gamma(X,(\fn_P)_{\CP_M}\otimes \omega_X^{\frac{1}{2}})\otimes 
\det(\Gamma(X,\fn_P\otimes \omega_X^{\frac{1}{2}}))^{\otimes -1}
\end{equation} 
and 
\begin{equation} \label{e:rel det 2}
\det(\Gamma(X,(\fn^-_P)_{\CP_M}\otimes \omega_X^{\frac{1}{2}})\otimes 
\det(\Gamma(X,\fn^-_P\otimes \omega_X^{\frac{1}{2}}))^{\otimes -1}
\end{equation} 
are canonically isomorphic and their tensor product is canonically isomorphic to \eqref{e:rel det}.
This would produce the sought-for square root of \eqref{e:rel det}.

\medskip

To construct an isomorphism between \eqref{e:rel det 1} and \eqref{e:rel det 2}, we identify $\fn_P$ with
the dual of $\fn^-_P$ using the Killing form. Hence, we have
\begin{equation} \label{e:vector bundle duality}
((\fn_P)_{\CP_M})^\vee \simeq (\fn^-_P)_{\CP_M} \text{ and } (\fn_P)^*\simeq \fn^-_P
\end{equation}
as vector bundles (resp., vector spaces). 

\medskip

Now the desired identification follows from the fact that
for a vector bundle $\CE$ on $X$, we have
\begin{equation}
\det(\Gamma(X,\CE))\simeq \det(\Gamma(X,\CE^\vee\otimes \omega_X)).
\end{equation} 

\medskip

To construct an isomorphism between the tensor product of \eqref{e:rel det 1} and \eqref{e:rel det 2} and the
right-hand side in \eqref{e:rel det},
we recall the formula 
\begin{equation} \label{e:det formula}
\det(\Gamma(X,\CE\otimes \CL))\otimes \det(\Gamma(X,\CO_X))^{\otimes \on{rk}(\CE)}
\simeq \det(\Gamma(X,\CE))\otimes \det(\Gamma(X,\CL))^{\otimes \on{rk}(\CE)}\otimes \on{Weil}(\det(\CE),\CL),
\end{equation} 
where:

\begin{itemize}

\item $\CE$ is a vector bundle on $X$;

\item $\CL$ is a line bundle on $X$;

\item $\on{Weil}(-,-)$ is the Weil pairing.

\end{itemize}

\medskip

Hence, the ratio of the tensor product of \eqref{e:rel det 1} and \eqref{e:rel det 2} and the
right-hand side in \eqref{e:rel det} is 
\begin{multline} 
\on{Weil}(\det((\fn_P)_{\CP_M}),\omega_X^{\frac{1}{2}})\otimes 
\on{Weil}(\det(\fn_P)\otimes \CO_X,\omega_X^{\frac{1}{2}})^{\otimes -1}\otimes \\
\otimes \on{Weil}(\det((\fn^-_P)_{\CP_M}),\omega_X^{\frac{1}{2}})\otimes 
\on{Weil}(\det(\fn^-_P)\otimes \CO_X,\omega_X^{\frac{1}{2}})^{\otimes -1}.
\end{multline} 

However, the latter line is canonically trivialized thanks to \eqref{e:vector bundle duality}.

\end{proof} 

\begin{cor} \label{c:det to BunN rho}
The line $\fl_{G,N_{\rho(\omega_X)}}$ admits a canonical 
square root. 
\end{cor}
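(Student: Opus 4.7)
The plan is to deduce \corref{c:det to BunN rho} as an immediate specialization of \propref{p:det to BunN} to the Borel $P = B$ with Levi quotient $M = T$, applied to the $T$-bundle $\CP_T = \rho(\omega_X)$. A brief preliminary point: since $\rho$ is in general only a cocharacter of $T_{ad}$ rather than of $T$ itself, the notation $\rho(\omega_X)$ should be interpreted as $(2\rho)(\omega_X^{\otimes \frac{1}{2}})$, using the square root $\omega_X^{\otimes \frac{1}{2}}$ fixed in \secref{sss:pfaff}; this yields a genuine $T$-bundle on $X$.

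With this convention in place, \propref{p:det to BunN} directly produces a canonical square root of the line
$$\det_{\Bun_G}|_{\CP_T} \otimes \det^{\otimes -1}_{\Bun_T}|_{\CP_T}.$$
The next step is to observe that the second factor is canonically trivial. Indeed, the adjoint action of $T$ on its own Lie algebra $\ft$ is trivial, so for any $T$-bundle $\CP_T$ we have a canonical identification $\ft_{\CP_T} \simeq \ft \otimes \CO_X$, and hence $\det_{\Bun_T}|_{\CP_T}$ is canonically trivialized via the identification
$$\det(\Gamma(X,\ft_{\CP_T}))\otimes \det(\Gamma(X,\ft\otimes\CO_X))^{\otimes -1}\simeq \det(\Gamma(X,\ft\otimes\CO_X))\otimes \det(\Gamma(X,\ft\otimes\CO_X))^{\otimes -1}.$$

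Finally I would identify the resulting canonical square root of $\det_{\Bun_G}|_{\CP_T}$ with a square root of $\fl_{G,N_{\CP_T}}$. By the definition in \secref{sss:crit to BunN}, $\fl_{G,N_{\CP_T}}$ is the constant fiber of $\fp^*(\det_{\Bun_G})$ on $\Bun_{N,\CP_T}$, and evaluation at the distinguished point corresponding to the $B$-bundle induced from $\CP_T$ identifies this constant fiber with $\det_{\Bun_G}|_{\CP_T}$. There is no real obstacle in this argument: the only ``work'' is bookkeeping to confirm that the square root produced by \propref{p:det to BunN} is compatible with this identification, which is automatic from the construction in the proof of that proposition.
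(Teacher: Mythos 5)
Your proposal is correct and takes essentially the same route as the paper: the corollary has no separate proof there because it is exactly Proposition~\ref{p:det to BunN} applied with $\CP_T = \rho(\omega_X)$. Your preliminary remark that $\rho(\omega_X)$ should be read as $(2\rho)(\omega_X^{\otimes \frac{1}{2}})$ when $\rho$ is not an integral cocharacter of $T$ is a sensible clarification (and is the reason the choice of $\omega_X^{\otimes \frac{1}{2}}$ is needed at all here), and your unpacking of the implicit bridge from the proof of \propref{p:det to BunN} to its statement --- the canonical triviality of $\det_{\Bun_T}$ and the identification of $\fl_{G,N_{\CP_T}}$ with $\det_{\Bun_G}|_{\CP_T}$ via the distinguished point of $\Bun_{N,\CP_T}$ --- is correct, though it re-derives what the proposition already asserts rather than being logically required for the corollary.
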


\sssec{}

Thanks to \corref{c:det to BunN rho}, we have a well-defined functor
$$\fp^!:\Dmod_{\frac{1}{2}}(\Bun_G)\to \Dmod(\Bun_{N,\rho(\omega_X)}),$$
with a partially defined left adjoint, denoted $\fp_!$. 

\sssec{}

Note now that there is a canonical map
$$\chi:\Bun_{N,\rho(\omega_X)}\to \BG_a.$$

Indeed, for every vertex $i$ of the Dynkin diagram of $G$, we have a map
$$\Bun_{N,\rho(\omega_X)}\to \Bun_{N_i,\rho_i(\omega_X)},$$
where $(N_i,\chi_i)$ is the corresponding pair for the subminimal Levi subgroup attached to $i$. 

\medskip

For every $i$, we have a canonical map 
$$\chi_i:\Bun_{N_i,\rho_i(\omega_X)}\to \BG_a$$
that records the class of the extension.

\medskip

We let $\chi$ be the map
$$\Bun_{N,\rho(\omega_X)} \to \underset{i}\Pi\, \Bun_{N_i,\rho_i(\omega_X)} \overset{\underset{i}\Pi\, \chi_i}\longrightarrow
\underset{i}\Pi\, \BG_a \overset{\on{sum}}\to \BG_a.$$

\sssec{}

We let 
$$\on{Poinc}^{\on{Vac,glob}}_{G,!}\in  \Dmod_{\frac{1}{2}}(\Bun_G)$$
be the object equal to 
$$\fp_!\circ \chi^*(\on{exp}),$$
where:

\begin{itemize}

\item $\on{exp}\in \Dmod(\BG_a)$ is the exponential D-module, normalized so that
$$\on{add}^*(\on{exp})\simeq \on{exp}\boxtimes \on{exp},$$
(i.e., $\on{exp}\in \Dmod(\BG_a)^\heartsuit[-1]$);

\medskip

\item The functor $\fp_!$ is well-defined on $\chi^*(\on{exp})$, since the latter is holonomic.

\end{itemize} 

\sssec{} \label{sss:coeff}

Let 
$$\on{coeff}^{\on{Vac,glob}}:\Dmod_{\frac{1}{2}}(\Bun_G)\to \Vect$$
denote the functor co-represented by $\on{Poinc}^{\on{Vac,glob}}_{G,!}$.

\medskip

Explicitly,
$$\on{coeff}^{\on{Vac,glob}}\simeq \on{C}^\cdot(\Bun_{N,\rho(\omega_X)},\fp^!(-)\overset{*}\otimes \on{exp}_\chi),$$
where\footnote{Strictly speaking, in the formula below one should have replaces $\chi$ by $-\chi$. However, it does not matter
because the situation is equivariant with respect to the action of $T$.}
$$\on{exp}_\chi:=\chi^*(\on{exp})\in \Dmod(\Bun_{N,\rho(\omega_X)}).$$

\ssec{The coarse version of the functor} \label{ss:coarse}

In this subsection we will construct a \emph{coarse} version of the Langlands functor $\BL_G$,
denoted $\BL_{G,\on{coarse}}$. 

\medskip

The difference between the two versions is that $\BL_G$ is supposed to
take values in the category $\IndCoh_\Nilp(\LS_\cG)$, while $\BL_{G,\on{coarse}}$ takes values
in the usual category $\QCoh(\LS_\cG)$. 

\sssec{}

Note that by \secref{sss:spectral action}, a choice of an object in $\Dmod_{\frac{1}{2}}(\Bun_G)$
defines a $\QCoh(\LS_\cG)$-linear functor $\QCoh(\LS_\cG)\to \Dmod_{\frac{1}{2}}(\Bun_G)$. 

\sssec{}

We define the functor
\begin{equation} \label{e:LL temp}
\BL^L_{G,\on{temp}}:\QCoh(\LS_\cG)\to \Dmod_{\frac{1}{2}}(\Bun_G).
\end{equation}
to correspond to the object 
\begin{equation} \label{e:Poinc Vac glob}
\on{Poinc}^{\on{Vac,glob}}_{G,!}\in  \Dmod_{\frac{1}{2}}(\Bun_G).
\end{equation}

\medskip 

Since the object $\on{Poinc}^{\on{Vac,glob}}_{G,!}$ is compact and the monoidal category $\QCoh(\LS_\cG)$
is compactly generated and rigid, the functor $\BL^L_{G,\on{temp}}$ preserves compactness.

\begin{rem}
The notation $\BL^L_{G,\on{temp}}$ has to do with the fact that the essential image 
of this functor lands in the subcategory 
$$\Dmod_{\frac{1}{2}}(\Bun_G)_{\on{temp}}\subset \Dmod_{\frac{1}{2}}(\Bun_G),$$
see \secref{sss:LL temp}.
\end{rem} 

\begin{rem}

Once GLC (\conjref{c:GLC dR}) is proved, it would follow that the functor $\BL^L_{G,\on{temp}}$
is actually an equivalence
$$\QCoh(\LS_\cG)\overset{\sim}\to \Dmod_{\frac{1}{2}}(\Bun_G)_{\on{temp}},$$
see Remark \ref{r:L temp}.

\end{rem} 

\sssec{}

We let 
$$\BL_{G,\on{coarse}}:\Dmod_{\frac{1}{2}}(\Bun_G)\to \QCoh(\LS_\cG)$$
be the right adjoint of $\BL^L_{G,\on{temp}}$.

\medskip 

Since the category $\QCoh(\LS_\cG)$ is compactly generated and the functor $\BL^L_{G,\on{temp}}$ preserves compactness, 
the functor $\BL_{G,\on{temp}}$ is continuous. 
By rigidity, $\BL_{G,\on{temp}}$ is automatically $\QCoh(\LS_\cG)$-linear. 

\sssec{}

One can equivalently describe $\BL_{G,\on{coarse}}$ as follows. It is uniquely characterized by the following
two pieces of structure: 

\begin{itemize}

\item $\BL_G$ is $\QCoh(\LS_\cG)$-linear;

\smallskip

\item $\Gamma(\LS_\cG,\BL_{G,\on{coarse}}(-))\simeq \on{coeff}^{\on{Vac,glob}}$.

\end{itemize} 

\ssec{The case \texorpdfstring{$G=T$}{G=T}} \label{ss:torus case}

\sssec{}

Let $G=T$ be a torus. Consider the Fourier-Mukai equivalence
$$\on{FM}:\QCoh(\Bun_T)\to \QCoh(\Bun_\cT),$$
given by the Poincar\'e line bundle 
$$\CL_{\on{Poinc}}\in \QCoh(\Bun_T\times \Bun_\cT),$$
as a kernel, where $\CL_{\on{Poinc}}$, viewed as a map $\Bun_T\times \Bun_\cT\to B\BG_m$, is
given by the Weil pairing.

\sssec{}

It is known that $\on{FM}$ can be enhanced to an equivalence
$$\on{FM}^{\on{enh}}:\Dmod(\Bun_T)\to \QCoh(\LS_\cT),$$
that makes the following diagram commute:
$$
\CD
\Dmod(\Bun_T) @>{\on{FM}^{\on{enh}}}>> \QCoh(\LS_\cT) \\
@VVV @VVV \\
\QCoh(\Bun_T) @>{\on{FM}}>> \QCoh(\Bun_\cT),
\endCD
$$
where: 

\begin{itemize}

\item The functor $\Dmod(\Bun_T)\to \Dmod(\Bun_T)$ is $\oblv^r$, the forgetful functor for ``right" D-modules;

\medskip

\item The functor $\QCoh(\LS_\cT)\to \QCoh(\Bun_\cT)$ is direct image along the projection
$$\LS_\cT\to \Bun_\cT.$$

\end{itemize}

\sssec{}

Unwinding the definitions, we obtain that the functor $\BL_{T,\on{coarse}}$ identifies with
$$\on{FM}^{\on{enh}}\circ \tau_T,$$
where $\tau_T$ is the Cartan involution, i.e., the inversion automorphism, of $T$.

\sssec{}

Note that in the case of $G=T$, the subset $\Nilp\subset \on{Sing}(\LS_\cT)$ is the $0$-section. Hence
$$\IndCoh_\Nilp(\LS_\cT)=\QCoh(\LS_\cT).$$

We set
$$\BL_T:=\BL_{T,\on{coarse}}.$$

So in this case there is no difference between the coarse Langlands functor and the sought-for
Langlands functor $\BL_T$. 

\ssec{Statement of the main result}

In this subsection we will formulate \thmref{t:compact to bdd below dr}, which would allow us to upgrade $\BL_{G,\on{coarse}}$
to the actual Langlands functor $\BL_G$. 

\sssec{}

The main technical result of this paper pertaining to the Langlands functor in the de Rham context reads:

\begin{thm} \label{t:compact to bdd below dr}
The functor
$$\BL_{G,\on{coarse}}:\Dmod_{\frac{1}{2}}(\Bun_G)\to \QCoh(\LS_\cG)$$
sends compact objects in $\Dmod_{\frac{1}{2}}(\Bun_G)$ to bounded below (i.e., eventually coconnective)
objects in $\QCoh(\LS_\cG)$.
\end{thm}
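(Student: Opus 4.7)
The plan is to follow the two-step reduction outlined in the introduction: first reduce the statement that $\BL_{G,\on{coarse}}$ sends compacts to eventually coconnective objects to the statement that $\BL_{G,\on{coarse}}$ has cohomological amplitude bounded on the left; then reduce the latter, via a fiberwise criterion for boundedness, to the analogous property of the restricted Langlands functor, which is already available in \cite{FR}.

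For the first reduction, I would observe that any compact object $\CF \in \Dmod_{\frac{1}{2}}(\Bun_G)$ is itself eventually coconnective. This is a general fact about D-modules on a truncatable algebraic stack with affine diagonal: one writes $\Bun_G$ as an increasing union of quasi-compact open substacks $U \subset \Bun_G$ with affine diagonal, notes that compact generators on $\Bun_G$ come from pushforwards of compact objects from such $U$, and uses that on a quasi-compact stack with affine diagonal every compact D-module is cohomologically bounded. Granting this, the task becomes showing that $\BL_{G,\on{coarse}}$ shifts cohomological degrees at most by a finite amount on the left, uniformly — this is \textbf{Theorem \ref{t:L coarse left exact}} in the paper's organization. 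Since the functor is $\QCoh(\LS_\cG)$-linear and the spectral action is continuous, once we have a uniform left-bound on amplitude the image of a bounded-below compact is automatically eventually coconnective.

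The second reduction rests on a local criterion: for $\CY$ an eventually coconnective (reasonable) stack, an object $\CE \in \QCoh(\CY)$ lies in $\QCoh(\CY)^{\geq -n}$ iff its $!$-fibers at all field-valued points lie in degrees $\geq -n - d$ for a fixed $d$ depending on $\CY$. I would apply this to $\CY = \LS_\cG$, so that left-boundedness of $\BL_{G,\on{coarse}}(\CF)$ is controlled by the $!$-fibers at closed points $\sigma: \Spec(k') \to \LS_\cG$. The key point is that these $!$-fibers only see the set-theoretic behavior of $\BL_{G,\on{coarse}}$ over the reduced/restricted locus: $!$-pullback to a field-valued point of $\LS_\cG$ factors through the projection $\QCoh(\LS_\cG) \to \QCoh(\LS_\cG)_{\on{restr}}$ that defines the restricted Langlands functor $\BL^{\on{restr}}_{G,\on{coarse}}$. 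Hence a left-bound on the amplitude of $\BL^{\on{restr}}_{G,\on{coarse}}$ (on compact objects coming from $\Dmod_{\frac{1}{2},\Nilp}(\Bun_G)$) yields the needed bound after passing back up through the $!$-fiber criterion.

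The final step is to invoke Theorem \ref{t:L coarse left exact Nilp} of \cite{FR}, which asserts precisely that $\BL^{\on{restr}}_{G,\on{coarse}}$ has cohomological amplitude bounded on the left. Applied to compact generators of $\Dmod_{\frac{1}{2},\Nilp}(\Bun_G)$ — and hence, via the fiberwise criterion and the compatibility above, to arbitrary compact generators of $\Dmod_{\frac{1}{2}}(\Bun_G)$ — this closes the argument. The hardest step conceptually is the fiberwise criterion and the reduction to the restricted side: one needs to verify that the $!$-fiber of $\BL_{G,\on{coarse}}(\CF)$ at an arbitrary $k'$-point of $\LS_\cG$ really is computed by the restricted functor, which uses that any such point, regarded through its infinitesimal neighborhood inside $\LS_\cG$, is already captured by the setwise-supported subcategory $\QCoh(\LS_\cG)_{\on{restr}}$. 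The rest is bookkeeping: assembling the amplitude bounds, using $\QCoh(\LS_\cG)$-linearity to propagate them, and combining with the left-boundedness of compacts of $\Dmod_{\frac{1}{2}}(\Bun_G)$ to obtain the desired eventual coconnectivity.
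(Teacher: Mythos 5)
Your proposal follows the paper's strategy essentially verbatim: (i) show compacts are eventually coconnective (the paper's \propref{p:compact bdd below}, proved for truncatable stacks with affine diagonal via a Cousin argument); (ii) reduce to a left-amplitude bound on $\BL_{G,\on{coarse}}$ (\thmref{t:L coarse left exact}); (iii) detect left-boundedness via $!$-fibers at field-valued points (\propref{p:when >0}); (iv) observe that such $!$-fibers factor through $\QCoh(\LS_\cG)_{\on{restr}}$; and (v) invoke \thmref{t:L coarse left exact Nilp} from \cite{FR}. So the high-level structure is the same as the paper's proof.

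However, your account of the final reduction step (from the full functor to the restricted one) is imprecise in a way that obscures the mechanism. Once you've factored $i_\sigma^!$ through $(\iota^{\on{spec}})^R$, the crucial identity is
$$(\iota^{\on{spec}})^R\circ \BL_{G,\on{coarse}} \;\simeq\; \BL^{\on{restr}}_{G,\on{coarse}}\circ \iota^R,$$
where $\iota^R$ is the (continuous) \emph{right} adjoint to the embedding $\iota:\Dmod_{\frac{1}{2},\Nilp}(\Bun_G)\hookrightarrow \Dmod_{\frac{1}{2}}(\Bun_G)$. This means that what actually gets fed to $\BL^{\on{restr}}_{G,\on{coarse}}$ is $\iota^R(\CF)$, which for $\CF$ compact need \emph{not} be compact (nor be a compact generator of the nilpotent subcategory). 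So the statement ``a left-bound on the amplitude of $\BL^{\on{restr}}_{G,\on{coarse}}$ on compact objects coming from $\Dmod_{\frac{1}{2},\Nilp}(\Bun_G)$'' is not what is used; one needs the amplitude bound for \emph{arbitrary} objects of $\Dmod_{\frac{1}{2},\Nilp}(\Bun_G)$ (which is indeed what \thmref{t:L coarse left exact Nilp} supplies), together with the left t-exactness of $\iota^R$, itself a consequence of $\iota$ being t-exact. Your proposal never names $\iota^R$ nor records this key t-exactness fact, and the closing sentence about ``compact generators of $\Dmod_{\frac{1}{2},\Nilp}(\Bun_G)$'' reflects that confusion. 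A related, smaller slip: the appeal to $\QCoh(\LS_\cG)$-linearity to propagate the amplitude bound is unnecessary — once the functor has bounded left amplitude, applying it to an eventually coconnective object immediately yields an eventually coconnective output, without any linearity input.
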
 

The proof will be given in \secref{s:proof of dr}. We now proceed to the consequences of \thmref{t:compact to bdd below dr}
that pertain to the geometric Langlands functor. 

\sssec{}

Let $\IndCoh_\Nilp(\LS_\cG)$ be the category of \cite[Sect. 11.1.5]{AG}. It is equipped with a forgetful functor  
\begin{equation} \label{e:coarsening}
\Psi_{\Nilp,\{0\}}:\IndCoh_\Nilp(\LS_\cG)\to  \QCoh(\LS_\cG)
\end{equation} 
is t-exact and induces an equivalence 
$$\IndCoh_\Nilp(\LS_\cG)^{>-\infty}\to \QCoh(\LS_\cG)^{>-\infty},$$
see \cite[Proposition 4.4.5]{AG}. 

\sssec{}

Combining this with \thmref{t:compact to bdd below dr} and the fact that $\Dmod_{\frac{1}{2}}(\Bun_G)$
is compactly generated (see \cite[Theorem 0.1.2]{DG1}), we obtain:

\begin{cor} \label{c:main dr}
There exists a (colimit-preserving) functor
$$\BL_G:\Dmod_{\frac{1}{2}}(\Bun_G)\to \IndCoh_\Nilp(\LS_\cG),$$
uniquely characterized by the following properties:

\begin{itemize}

\item The functor $\BL_G$  sends compact objects in 
$\Dmod_{\frac{1}{2}}(\Bun_G)$ to eventually coconnective objects
in $\IndCoh_\Nilp(\LS_\cG)$, i.e., to $\IndCoh_\Nilp(\LS_\cG)^{>-\infty}$. 

\smallskip

\item $\Psi_{\Nilp,\{0\}}\circ \BL_G\simeq \BL_{G,\on{coarse}}$;

\end{itemize}

\end{cor}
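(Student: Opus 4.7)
The plan is to combine the three ingredients already in hand—compact generation of $\Dmod_{\frac{1}{2}}(\Bun_G)$ (by \cite{DG1}), \thmref{t:compact to bdd below dr}, and the fact that $\Psi_{\Nilp,\{0\}}$ restricts to an equivalence
\[
\Psi_{\Nilp,\{0\}}^{>-\infty}:\IndCoh_\Nilp(\LS_\cG)^{>-\infty}\iso \QCoh(\LS_\cG)^{>-\infty}
\]
(cited from \cite[Proposition 4.4.5]{AG})—to define $\BL_G$ first on compact objects and then extend by ind-completion.

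First, I would restrict $\BL_{G,\on{coarse}}$ to the full subcategory $\Dmod_{\frac{1}{2}}(\Bun_G)^c$ of compact objects. By \thmref{t:compact to bdd below dr}, this restriction factors through the eventually coconnective subcategory $\QCoh(\LS_\cG)^{>-\infty}$. I would then define
\[
\BL_G\big|_{\Dmod_{\frac{1}{2}}(\Bun_G)^c} \;:=\; (\Psi_{\Nilp,\{0\}}^{>-\infty})^{-1}\circ \BL_{G,\on{coarse}}\big|_{\Dmod_{\frac{1}{2}}(\Bun_G)^c},
\]
which is a well-defined functor landing in $\IndCoh_\Nilp(\LS_\cG)^{>-\infty}$. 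Since $\Dmod_{\frac{1}{2}}(\Bun_G)\simeq \on{Ind}(\Dmod_{\frac{1}{2}}(\Bun_G)^c)$, ind-extension along the embedding into $\IndCoh_\Nilp(\LS_\cG)$ produces a unique colimit-preserving functor
\[
\BL_G:\Dmod_{\frac{1}{2}}(\Bun_G)\to \IndCoh_\Nilp(\LS_\cG)
\]
whose restriction to compacts is as above. By construction, $\BL_G$ sends compacts into $\IndCoh_\Nilp(\LS_\cG)^{>-\infty}$, verifying the first property.

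For the second property, note that $\Psi_{\Nilp,\{0\}}$ is continuous (it is the right adjoint in an adjunction between compactly generated categories, but also known directly to preserve colimits, see \cite{AG}). Hence the composition $\Psi_{\Nilp,\{0\}}\circ \BL_G$ is colimit-preserving, and on compacts it agrees with $\BL_{G,\on{coarse}}$ by construction. Since $\BL_{G,\on{coarse}}$ is also colimit-preserving, two continuous functors out of the compactly generated category $\Dmod_{\frac{1}{2}}(\Bun_G)$ that agree on compact generators must coincide, giving the required isomorphism $\Psi_{\Nilp,\{0\}}\circ \BL_G \simeq \BL_{G,\on{coarse}}$.

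Uniqueness follows by reversing the argument: any colimit-preserving functor $\BL_G'$ satisfying both properties has its restriction to compacts determined, via the bijection induced by $\Psi_{\Nilp,\{0\}}^{>-\infty}$, by $\BL_{G,\on{coarse}}|_{\Dmod_{\frac{1}{2}}(\Bun_G)^c}$; hence $\BL_G'$ agrees with $\BL_G$ on compacts, and therefore everywhere by colimit-preservation. There is no serious obstacle here—the statement is a formal corollary, with all substantive content absorbed into \thmref{t:compact to bdd below dr} and the equivalence of eventually coconnective subcategories.
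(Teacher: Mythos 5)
Your proposal is correct and unpacks exactly the formal argument the paper intends; the paper simply states the corollary as a direct combination of compact generation, \thmref{t:compact to bdd below dr}, and the equivalence of eventually coconnective subcategories under $\Psi_{\Nilp,\{0\}}$, and your proof fills in those steps in the expected way.
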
 

\sssec{}

The functor $\BL_G$ is \emph{the} geometric Langlands functor in the de
Rham context. 

\medskip

We can now state the geometric Langlands conjecture (GLC) in the de Rham context:

\begin{conj} \label{c:GLC dR}
The functor $\BL_G$ is an equivalence.
\end{conj}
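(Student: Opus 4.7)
The plan is to prove \conjref{c:GLC dR} in two main stages, following the logical structure laid out in \secref{ss:comparison Intro}. First I would reduce the full conjecture to its \emph{tempered} variant via the implication ``Tempered GLC $\Rightarrow$ Full GLC'' established in \secref{ss:temp to all}. The tempered Langlands functor $\BL_{G,\on{temp}}$ and its left adjoint $\BL^L_{G,\on{temp}}$ are already in hand from the present section, and the problem becomes showing that both the unit and the counit of this adjunction are isomorphisms.

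Second, I would establish full faithfulness of $\BL^L_{G,\on{temp}}$ by exploiting the $\QCoh(\LS_\cG)$-linearity built into $\BL_G$. Since $\QCoh(\LS_\cG)$ is rigid and generated under the monoidal action by $\CO_{\LS_\cG}$, it is enough to show that the unit map
$$
\CO_{\LS_\cG} \longrightarrow \BL_{G,\on{coarse}}\bigl(\on{Poinc}^{\on{Vac,glob}}_{G,!}\bigr)
$$
is an isomorphism in $\QCoh(\LS_\cG)$. Using the characterization of $\BL_{G,\on{coarse}}$ via $\Gamma(\LS_\cG,-)\simeq \on{coeff}^{\on{Vac,glob}}$ (\secref{sss:coeff}), this reduces to computing the Hecke-equivariant Whittaker coefficients of the vacuum Poincar\'e sheaf --- a global geometric incarnation of the Casselman--Shalika formula. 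I would approach this using factorizable Whittaker sheaves on the Ran space, combined with Kac-Moody localization at the critical level and the FLE, to identify the Hecke-twisted Whittaker coefficient with the structure sheaf of $\LS_\cG$ together with the expected vanishing of its higher cohomology against any finite-dimensional $\cG$-representation.

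For essential surjectivity onto the tempered part, I would follow the Eisenstein gluing strategy of Beraldo--Chen--Lin alluded to in the introduction: glue $\Dmod_{\frac{1}{2}}(\Bun_G)_{\on{temp}}$ from the tempered Levi categories $\Dmod_{\frac{1}{2}}(\Bun_M)_{\on{temp}}$ via parabolic (Eisenstein) induction, glue $\QCoh(\LS_\cG)$ analogously via the spectral Eisenstein functors of \cite{Be2}, and verify that $\BL^L_{G,\on{temp}}$ intertwines the two presentations. Induction on the semisimple rank then reduces the essential surjectivity to the base case $G=T$, which is settled by the Fourier--Mukai identification of \secref{ss:torus case}. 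The coarse/full distinction encoded in \eqref{eq:psi-lg} and the cohomological estimates of \thmref{t:compact to bdd below dr} then guarantee that the tempered equivalence lifts to an equivalence $\BL_G$ with target $\IndCoh_\Nilp(\LS_\cG)$.

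The principal obstacle I expect is step two: the explicit identification $\BL_{G,\on{coarse}}(\on{Poinc}^{\on{Vac,glob}}_{G,!}) \simeq \CO_{\LS_\cG}$ is essentially the geometric Casselman--Shalika theorem at the global level, and it requires a package of deep local-to-global statements --- Kac-Moody localization at critical level, factorization of Whittaker categories, and fine control of the interaction between Whittaker, spherical, and Iwahori-equivariant categories --- that presumably constitutes the technical core of the remaining papers in the series. A secondary difficulty is to show that the Eisenstein gluing on the automorphic side is compatible with the spectral Eisenstein gluing in a manner respected by $\BL^L_{G,\on{temp}}$, which amounts to a coherent match of constant-term functors with their spectral counterparts along every parabolic.
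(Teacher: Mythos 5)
You have selected a statement that this paper does \emph{not} prove: \conjref{c:GLC dR} is labelled a conjecture precisely because the present paper only \emph{constructs} the functor $\BL_G$ (via $\BL_{G,\on{coarse}}$ and the cohomological lift of \thmref{t:compact to bdd below dr}) and establishes the logical equivalence of its various incarnations (de Rham $\Leftrightarrow$ Betti, full $\Leftrightarrow$ restricted $\Leftrightarrow$ tempered). The actual proof that $\BL_G$ is an equivalence is distributed over the remaining papers in the five-paper series. There is therefore no proof in this paper against which to check your proposal, and your sketch is, by your own admission, a roadmap that defers the technically heavy content (geometric Casselman--Shalika, Kac--Moody localization, FLE, factorization) to later papers rather than a proof.

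That said, the roadmap is partially aligned with what the series does and partially off. Your step one (reduce full GLC to tempered GLC) is exactly the content of \secref{ss:temp to all}, and your step two (reduce full faithfulness of $\BL^L_{G,\on{temp}}$ by $\QCoh(\LS_\cG)$-linearity to the single identification $\CO_{\LS_\cG} \iso \BL_{G,\on{coarse}}(\on{Poinc}^{\on{Vac,glob}}_{G,!})$, i.e., global Casselman--Shalika) correctly names the key lemma that drives the rest of the series. Your step three is where the plan goes wrong: essential surjectivity of $\BL^L_{G,\on{temp}}$ onto $\Dmod_{\frac{1}{2}}(\Bun_G)_{\on{temp}}$ cannot be obtained by gluing the tempered category from tempered Levi categories via $\Eis_!$ with induction on rank. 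Eisenstein functors do not land in the tempered subcategory; the gluing strategy of Beraldo--Chen--Lin (\cite{BeLi}), as the introduction explains, glues the \emph{full} category $\Dmod_{\frac{1}{2}}(\Bun_G)$ from the \emph{tempered} Levi categories $\Dmod_{\frac{1}{2}}(\Bun_M)_{\on{temp}}$, not the tempered category from itself. In the paper's own treatment, the role of Eisenstein series appears at a different point (in \propref{p:images}, to pass from tempered GLC back to full GLC), while generation of $\Dmod_{\frac{1}{2}}(\Bun_G)_{\on{temp}}$ by the $\Rep(\cG)_\Ran$-orbit of $\on{Poinc}^{\on{Vac,glob}}_!$ is a separate theorem proved in \cite{FR}.
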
 

\begin{rem}

By the same logic as in \corref{c:main dr}, using \thmref{t:compact to bdd below dr}, we can lift $\BL_{G,\on{coarse}}$ to a functor 
$$\BL'_G:\Dmod_{\frac{1}{2}}(\Bun_G)\to \IndCoh(\LS_\cG),$$
so that
$$\Psi_{\on{all},\Nilp}\circ \BL'_G\simeq \BL_G,$$
where $\Psi_{\on{all},\Nilp}$ is the right adjoint of the inclusion
$$\Xi_{\Nilp,\on{all}}:\IndCoh_\Nilp(\LS_\cG)\to \IndCoh(\LS_\cG).$$

However, if we accept GLC, it follows from \lemref{l:Eis} below that the functor $\BL'_G$
factors as 
$$\Xi_{\Nilp,\on{all}}\circ \BL_G.$$

So the refinement $\IndCoh_\Nilp(\LS_\cG)\rightsquigarrow \IndCoh(\LS_\cG)$ does not give
us anything new.

\end{rem}

\begin{rem}

That said, as was suggested by D.~Arinkin, 
one can consider a \emph{renormalized} version\footnote{The compact generators of the renormalization are !-extension of \emph{locally compact}
objects on q.c. open substacks of $\Bun_G$.} $\Dmod_{\frac{1}{2}}(\Bun_G)_{\on{ren}}$
of $\Dmod_{\frac{1}{2}}(\Bun_G)$,
equipped with a pair of adjoint functors
$$\on{ren}:\Dmod_{\frac{1}{2}}(\Bun_G)\rightleftarrows \Dmod_{\frac{1}{2}}(\Bun_G)_{\on{ren}}:\on{un-ren},$$
and one can refine $\BL_G$ to a functor
$$\BL_{G,\on{ren}}:\Dmod_{\frac{1}{2}}(\Bun_G)_{\on{ren}}\to \IndCoh(\LS_\cG),$$
so that the diagrams
$$
\CD
\Dmod_{\frac{1}{2}}(\Bun_G)_{\on{ren}} @>{\BL_{G,\on{ren}}}>> \IndCoh(\LS_\cG) \\
@V{\on{un-ren}}VV @VV{\Psi_{\on{all},\Nilp}}V \\
\Dmod_{\frac{1}{2}}(\Bun_G)@>{\BL_G}>> \IndCoh_\Nilp(\LS_\cG) 
\endCD
$$
and
$$
\CD
\Dmod_{\frac{1}{2}}(\Bun_G)_{\on{ren}} @>{\BL_{G,\on{ren}}}>> \IndCoh(\LS_\cG) \\
@A{\on{ren}}AA @AA{\Xi_{\on{all},\Nilp}}A \\
\Dmod_{\frac{1}{2}}(\Bun_G)@>{\BL_G}>> \IndCoh_\Nilp(\LS_\cG) 
\endCD
$$
commute. 

\medskip

One can then formulate a renormalized version of GLC, which says that $\BL_{G,\on{ren}}$ is an equivalence. 
This will be addressed in a separate paper.

\end{rem}

\ssec{Compatibility with the spectral action}

In this subsection we will assume the validity of \thmref{t:compact to bdd below dr}, and hence of
\corref{c:main dr}. 

\medskip

We will show that the functor $\BL_G$ naturally upgrades to a functor between
$\QCoh(\LS_\cG)$-module categories. 

\sssec{}

Note that the category $\IndCoh_\Nilp(\LS_\cG)$ is naturally a module over $\QCoh(\LS_\cG)$,
and the functor $\Psi_{\Nilp,\{0\}}$ is naturally $\QCoh(\LS_\cG)$-linear.

\medskip

Consider $\Dmod_{\frac{1}{2}}(\Bun_G)$ also as a module over $\QCoh(\LS_\cG)$, see \secref{ss:spectral}.
We claim:

\begin{prop} \label{p:L and Hecke}
The functor $\BL_G$ carries a uniquely defined $\QCoh(\LS_\cG)$-linear structure, 
so that the induced $\QCoh(\LS_\cG)$-linear structure on
$$\Psi_{\Nilp,\{0\}}\circ \BL_G\simeq \BL_{G,\on{coarse}}$$
is the natural $\QCoh(\LS_\cG)$-linear structure on $\BL_{G,\on{coarse}}$.
\end{prop}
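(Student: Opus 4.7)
The plan is to exploit the universal property of $\BL_G$ from \corref{c:main dr} in combination with the fact that $\Psi_{\Nilp,\{0\}}$ is a $\QCoh(\LS_\cG)$-linear \emph{equivalence} on eventually coconnective subcategories.

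\medskip

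First, I would observe that $\BL_{G,\on{coarse}}$ carries a canonical $\QCoh(\LS_\cG)$-linear structure by its very construction: it is the right adjoint of the manifestly $\QCoh(\LS_\cG)$-linear functor $\BL^L_{G,\on{temp}}$, and the right adjoint of a continuous $\QCoh(\LS_\cG)$-linear functor between $\QCoh(\LS_\cG)$-module categories is automatically $\QCoh(\LS_\cG)$-linear whenever $\QCoh(\LS_\cG)$ is rigid symmetric monoidal, as is the case here. The task is thus to lift this linear structure uniquely through $\Psi_{\Nilp,\{0\}}$.

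\medskip

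The key input is that $\Psi_{\Nilp,\{0\}}$ is $\QCoh(\LS_\cG)$-linear and restricts to an equivalence
$$\IndCoh_\Nilp(\LS_\cG)^{>-\infty} \iso \QCoh(\LS_\cG)^{>-\infty}.$$
Moreover, the $\QCoh(\LS_\cG)$-action by any perfect (equivalently, compact) object preserves eventually coconnective subcategories on both sides, since perfect objects have bounded Tor-amplitude. Using the compact generation and rigidity of $\QCoh(\LS_\cG)$, I would then reduce constructing a $\QCoh(\LS_\cG)$-linear enhancement of $\BL_G$ to specifying coherent action maps of $\QCoh(\LS_\cG)^c$ on the restriction $\BL_G|_{\Dmod_{\frac{1}{2}}(\Bun_G)^c}$. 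For $\M \in \Dmod_{\frac{1}{2}}(\Bun_G)^c$ and $\F \in \QCoh(\LS_\cG)^c$, the spectral convolution $\F \star \M$ remains compact (as $\F$ is dualizable), so both $\BL_G(\F \star \M)$ and $\F \otimes \BL_G(\M)$ lie in $\IndCoh_\Nilp(\LS_\cG)^{>-\infty}$. Applying the inverse of the restricted equivalence above to the canonical coherence isomorphism $\F \otimes \BL_{G,\on{coarse}}(\M) \iso \BL_{G,\on{coarse}}(\F \star \M)$ then yields the desired isomorphism for $\BL_G$.

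\medskip

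The main obstacle is coherence: one must verify that these action maps assemble into a full $\QCoh(\LS_\cG)$-module structure, respecting associativity, units, and all higher operadic data. The argument I propose is formal: every coherence diagram for the $\QCoh(\LS_\cG)^c$-action on $\BL_G|_{\Dmod_{\frac{1}{2}}(\Bun_G)^c}$ lives entirely inside $\IndCoh_\Nilp(\LS_\cG)^{>-\infty}$, where $\Psi_{\Nilp,\{0\}}$ is a $\QCoh(\LS_\cG)$-linear equivalence, so the entire $\QCoh(\LS_\cG)$-module enhancement of $\BL_G|_{\Dmod_{\frac{1}{2}}(\Bun_G)^c}$ lifts uniquely from that of $\BL_{G,\on{coarse}}|_{\Dmod_{\frac{1}{2}}(\Bun_G)^c}$. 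Ind-extending produces the desired $\QCoh(\LS_\cG)$-linear structure on $\BL_G$; the space of such lifts is contractible, which gives both existence and uniqueness.
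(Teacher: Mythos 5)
Your proposal is correct and follows essentially the same route as the paper: reduce to the compact subcategory $\QCoh(\LS_\cG)^c$ using rigidity and compact generation, observe that the restriction of $\BL_G$ to compacts lands in the eventually coconnective subcategory $\IndCoh_\Nilp(\LS_\cG)^{>-\infty}$, and then transfer the $\QCoh(\LS_\cG)^c$-linear structure through the equivalence $\Psi_{\Nilp,\{0\}}:\IndCoh_\Nilp(\LS_\cG)^{>-\infty}\iso \QCoh(\LS_\cG)^{>-\infty}$. Your extra discussion of constructing individual action maps and then worrying about coherence is a slight detour — the paper's formulation, that the whole linear structure on $\BL_G|_{\Dmod_{\frac{1}{2}}(\Bun_G)^c}$ is the unique one pulled back through the equivalence $\Psi_{\Nilp,\{0\}}|_{>-\infty}$ from that of $\BL_{G,\on{coarse}}|_{\Dmod_{\frac{1}{2}}(\Bun_G)^c}$, subsumes the coherence issue in one stroke — but this is a presentational difference, not a mathematical one.
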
 

\begin{proof} 

Since $\QCoh(\LS_\cG)$ is compactly generated and rigid, in the statement of the proposition
we can replace the monoidal category $\QCoh(\LS_\cG)$ by its full subcategory $\QCoh(\LS_\cG)^c$. 

\medskip

By rigidity, the action of
$\QCoh(\LS_\cG)^c$ on $\Dmod_{\frac{1}{2}}(\Bun_G)$ preserves compactness.
Hence, it suffices to show that the functor
$$\BL_G|_{\Dmod_{\frac{1}{2}}(\Bun_G)^c}:\Dmod_{\frac{1}{2}}(\Bun_G)^c\to \IndCoh_\Nilp(\LS_\cG)$$
carries a uniquely defined $\QCoh(\LS_\cG)^c$-linear structure, so that its composition
with $\Psi_{\Nilp,\{0\}}$ reproduces the natural $\QCoh(\LS_\cG)^c$-linear structure on 
$$\BL_{G,\on{coarse}}|_{\Dmod_{\frac{1}{2}}(\Bun_G)^c}:\Dmod_{\frac{1}{2}}(\Bun_G)^c\to \QCoh(\LS_\cG).$$

\medskip

By the construction of $\BL_G$, the restriction $\BL_G|_{\Dmod_{\frac{1}{2}}(\Bun_G)^c}$ factors via 
a full subcategory 
$$\IndCoh_\Nilp(\LS_\cG)^{>-\infty}\subset \IndCoh_\Nilp(\LS_\cG).$$

Hence, it suffices to show that
$$\BL_G|_{\Dmod_{\frac{1}{2}}(\Bun_G)^c}:\Dmod_{\frac{1}{2}}(\Bun_G)^c\to \IndCoh_\Nilp(\LS_\cG)^{>-\infty}$$
carries a uniquely defined $\QCoh(\LS_\cG)^c$-linear structure, so that its composition
with $\Psi_{\Nilp,\{0\}}$ reproduces the natural $\QCoh(\LS_\cG)^c$-linear structure on 
$$\BL_{G,\on{coarse}}|_{\Dmod_{\frac{1}{2}}(\Bun_G)^c}:\Dmod_{\frac{1}{2}}(\Bun_G)^c\to \QCoh(\LS_\cG)^{>-\infty}.$$

However, the latter is automatic since $\Psi_{\Nilp,\{0\}}$ is an equivalence on the eventually coconnective
subcategories.

\end{proof}

\ssec{The bound on the right}

The contents of this subsection are tangential to rest of this paper (and will not be needed either for
the construction of $\BL_G$, or for the proof that it is an equivalence). 

\sssec{}

In \secref{ss:bdd right} we will prove:

\begin{prop} \label{p:bdd right} 
The functor $\BL_{G,\on{coarse}}$ has a bounded cohomological amplitude on the right, i.e., there exists an integer $d$ so that 
$\BL_{G,\on{coarse}}[d]$ is right t-exact. 
\end{prop}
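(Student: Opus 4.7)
The plan is to use the adjunction $\BL^L_{G,\on{temp}}\dashv \BL_{G,\on{coarse}}$ to translate the sought-for right-boundedness of $\BL_{G,\on{coarse}}$ into a left-boundedness statement for $\BL^L_{G,\on{temp}}$, and then exploit the description of the latter in terms of the spectral action on the vacuum Poincar\'e object $\on{Poinc}^{\on{Vac,glob}}_{G,!}$.

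First, by the general behavior of adjunctions with respect to t-structures, the assertion ``$\BL_{G,\on{coarse}}[d]$ is right t-exact'' is equivalent to the assertion ``$\BL^L_{G,\on{temp}}$ sends $\QCoh(\LS_\cG)^{\geq 0}$ into $\Dmod_{\frac{1}{2}}(\Bun_G)^{\geq -d}$.'' Since $\BL^L_{G,\on{temp}}$ preserves all colimits and $\QCoh(\LS_\cG)^{\geq 0}$ is generated under filtered colimits by the compact objects lying in it, it suffices to establish this estimate for compact $V\in \QCoh(\LS_\cG)^{\geq 0}$, which are perfect complexes concentrated in a bounded nonnegative cohomological range.

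Next, for such $V$ one has $\BL^L_{G,\on{temp}}(V)\simeq V\cdot \on{Poinc}^{\on{Vac,glob}}_{G,!}$ via the spectral action of \secref{sss:spectral action}. By \thmref{t:spectral decomp}, this action factors through $\Loc^{\on{spec}}_\cG$ and the Hecke action of $\Rep(\cG)_\Ran$ on $\Dmod_{\frac{1}{2}}(\Bun_G)$. Since $\on{Poinc}^{\on{Vac,glob}}_{G,!}$ is compact in $\Dmod_{\frac{1}{2}}(\Bun_G)$, it lies in a bounded cohomological range, so the problem reduces to uniformly bounding below the cohomological degree of the Hecke action of a coconnective compact object of $\Rep(\cG)_\Ran$ on this fixed object.

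The main obstacle is establishing the \emph{uniform} lower bound across all such inputs. I expect this to rely on perversity estimates furnished by geometric Satake: the relevant Hecke kernels are convolutions with (shifts of) Satake perverse sheaves, and their action on a holonomic object such as $\on{Poinc}^{\on{Vac,glob}}_{G,!}$ introduces a uniformly bounded cohomological shift, yielding the desired $d$. As a consistency check, the bound is already visible in the abelian case $G=T$ of \secref{ss:torus case}, where $\BL_T$ is the enhanced Fourier--Mukai transform and is thus manifestly of bounded cohomological amplitude; the general case should propagate from this case via parabolic induction, paralleling the Eisenstein-series strategy of \cite{BeLi}.
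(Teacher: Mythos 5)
Your proposal has two genuine gaps and, where it does contact solid ground, it takes a different route from the paper.

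The first gap is the transition from $\QCoh(\LS_\cG)$ to $\Rep(\cG)_\Ran$. You are correct that the spectral action factors through $\Loc_\cG^{\on{spec}}$, but a compact coconnective object $V\in\QCoh(\LS_\cG)^{\geq 0}$ need not be the image under $\Loc_\cG^{\on{spec}}$ of a compact \emph{coconnective} object of $\Rep(\cG)_\Ran$: the localization functor is neither t-exact nor essentially surjective on the heart, so the reduction of the estimate to Hecke kernels is not valid as stated. (There is also a subtler issue, which you might be able to patch, in the claim that $\QCoh(\LS_\cG)^{\geq 0}$ is generated under filtered colimits by compact objects lying in $\geq 0$.)

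The second and more serious gap is that the step you yourself flag as ``the main obstacle'' --- the uniform lower bound for Hecke convolution against $\on{Poinc}^{\on{Vac,glob}}_{G,!}$ over the Ran space --- is precisely the hard content, and you have not proved it. Perversity of Satake kernels at a single point gives a bound that degrades with the coweight (the dimension of the convolution fibers grows), and the Ran-space direction introduces an unbounded number of points, so no uniform bound follows ``for free'' from geometric Satake. The appeal to parabolic induction/Eisenstein gluing is a plausible program (it is essentially the alternative construction of Beraldo--Chen--Lin mentioned in the introduction), but nothing in your write-up establishes it.

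For comparison, the paper's proof avoids these difficulties entirely. It proves a point-wise criterion for an object of $\QCoh$ of an eventually coconnective stack to lie in $\QCoh^{\leq d}$ in terms of $*$-fibers at all field-valued points (Proposition \ref{p:when <0}), then uses the factorization of $i_\sigma^*$ through $\QCoh(\LS_\cG)_{\on{restr}}$ to rewrite $(\iota^{\on{spec}})^R\circ\BL_{G,\on{coarse}}$ as $\BL^{\on{restr}}_{G,\on{coarse}}\circ\iota^R$. The right-boundedness then follows from two external inputs: that $\iota^R$ has bounded cohomological dimension on the right (\cite[Corollary 14.5.5 and Proposition 17.3.10]{AGKRRV}), and that $\BL^{\on{restr}}_{G,\on{coarse}}$ is t-exact up to shift (Theorem \ref{t:L coarse left exact Nilp}, from \cite{FR}). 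In other words, the heavy lifting is outsourced to the restricted setting, where the sharp exactness result is already available; the Cousin reduction is the bridge. If you want to pursue your adjunction-based strategy, you would effectively need to reprove the substance of \cite{FR} on the automorphic side, which is considerably more than what the proposal currently contains.
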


\sssec{}

Since the functor $\Psi_{\Nilp,\{0\}}$ is t-exact and induces an equivalence on eventually coconnective
subcategories, from \propref{p:bdd right} we obtain:

\begin{cor} \label{c:bdd right} 
The functor $\BL_G$ has a bounded cohomological amplitude on the right. 
\end{cor}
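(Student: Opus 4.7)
The plan is to deduce the cohomological bound on $\BL_G$ directly from the corresponding bound on $\BL_{G,\on{coarse}}$ given by \propref{p:bdd right}, by showing that the coarsening functor $\Psi_{\Nilp,\{0\}}$ not only preserves but also \emph{reflects} the property of being cohomologically $\leq d$.

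Fix an integer $d$ for which $\BL_{G,\on{coarse}}[d]$ is right t-exact; I claim $\BL_G[d]$ is right t-exact as well. Given $\CF\in \Dmod_{\frac{1}{2}}(\Bun_G)^{\leq 0}$, set $X:=\BL_G(\CF)\in \IndCoh_\Nilp(\LS_\cG)$. The compatibility $\Psi_{\Nilp,\{0\}}\circ \BL_G\simeq \BL_{G,\on{coarse}}$ from \corref{c:main dr} together with \propref{p:bdd right} yields $\Psi_{\Nilp,\{0\}}(X)\in \QCoh(\LS_\cG)^{\leq d}$. It remains to upgrade this to $X\in \IndCoh_\Nilp(\LS_\cG)^{\leq d}$.

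The key step is the following general observation about $\Psi_{\Nilp,\{0\}}$. Consider the truncation $\tau^{>d}(X)\in \IndCoh_\Nilp(\LS_\cG)^{>d}\subset \IndCoh_\Nilp(\LS_\cG)^{>-\infty}$, which is in particular eventually coconnective. By the t-exactness of $\Psi_{\Nilp,\{0\}}$ recorded just after \corref{c:main dr}, we have
\[
\Psi_{\Nilp,\{0\}}(\tau^{>d}(X)) \simeq \tau^{>d}\bigl(\Psi_{\Nilp,\{0\}}(X)\bigr) = 0.
\]
Since $\Psi_{\Nilp,\{0\}}$ restricts to an equivalence between the eventually coconnective subcategories (this is exactly \cite[Proposition 4.4.5]{AG}, as cited in the excerpt), vanishing after $\Psi_{\Nilp,\{0\}}$ on an object of $\IndCoh_\Nilp(\LS_\cG)^{>-\infty}$ forces the object itself to vanish. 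Hence $\tau^{>d}(X)=0$, i.e., $X\in \IndCoh_\Nilp(\LS_\cG)^{\leq d}$, which is the desired conclusion.

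There is no real obstacle here: the entire content is packaged in the abstract fact that a t-exact functor which is an equivalence on bounded-below subcategories reflects upper cohomological bounds. All the substance of the corollary is already encoded in \propref{p:bdd right}, whose proof is the actual work and is deferred to \secref{ss:bdd right}.
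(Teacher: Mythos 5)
Your argument is correct and is precisely the reasoning the paper compresses into its one-sentence proof: it uses exactly that $\Psi_{\Nilp,\{0\}}$ is t-exact and restricts to an equivalence on eventually coconnective subcategories, so it reflects the bound $\leq d$ on the right. You have merely unpacked the formal verification (that a t-exact functor which is an equivalence on bounded-below parts is conservative there and commutes with truncations), which is fine and complete.
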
 

\section{Proof of \thmref{t:compact to bdd below dr}} \label{s:proof of dr}

In this section we will prove \thmref{t:compact to bdd below dr}, which was used in order to bootstrap
the coarse Langlands functor $\BL_{G,\on{coarse}}$ to the actual Langlands functor $\BL_G$.

\medskip

The main input is a result of \cite{FR}, stated here as \thmref{t:L coarse left exact Nilp}, which says that the \emph{restricted} 
version of the coarse Langlands functor is t-exact (up to a cohomological shift). 

\ssec{Strategy of the proof}

The statement of \thmref{t:compact to bdd below dr} follows immediately from the combination of the
following two results:

\begin{prop} \label{p:compact bdd below}
Compact objects of $\Dmod_{\frac{1}{2}}(\Bun_G)$ are bounded below (i.e., are eventually coconnective).  
\end{prop}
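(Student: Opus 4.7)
\medskip

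The plan is to deduce the result from two well-established facts about $\Bun_G$: its \emph{truncatability} (in the sense of Drinfeld--Gaitsgory) and the boundedness of compact D-modules on \emph{QCA} stacks (i.e., quasi-compact stacks with affine diagonal). Using the identification \eqref{e:untwist}, we may ignore the twist and work with $\Dmod(\Bun_G)$, since the twist is \'etale-locally trivial and does not affect boundedness in the t-structure.

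\medskip

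First, I would recall that $\Bun_G$ admits a presentation as a filtered union of quasi-compact open substacks
\[
j_\alpha : U_\alpha \hookrightarrow \Bun_G,
\]
each of which is QCA (e.g., the Harder--Narasimhan truncations $\Bun_G^{\le \mu}$). The key consequence, established in \cite{DG1}, is that
\[
\Dmod(\Bun_G) \simeq \underset{\alpha}{\on{colim}}\, \Dmod(U_\alpha),
\]
with transition functors given by $!$-extensions $(j_{\alpha\beta})_!$. In particular, every compact object $\CF \in \Dmod(\Bun_G)^c$ is (a direct summand of) $(j_\alpha)_! \CF_\alpha$ for some $\alpha$ and some compact object $\CF_\alpha \in \Dmod(U_\alpha)^c$.

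\medskip

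Second, I would invoke the fact that for a QCA stack $U$, every compact object of $\Dmod(U)$ is cohomologically bounded (on both sides). This is proved in \cite{DG1} by reducing via smooth descent to the case of a quasi-compact scheme, where it is classical that compact objects in the derived category of D-modules are bounded (being finitely generated over a noetherian sheaf of rings of bounded cohomological dimension). In particular, the chosen $\CF_\alpha$ is bounded below in $\Dmod(U_\alpha)$.

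\medskip

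Finally, since $j_\alpha$ is an open embedding, the functor $(j_\alpha)_!$ is t-exact with respect to the standard t-structures on $\Dmod(U_\alpha)$ and $\Dmod(\Bun_G)$. Therefore $\CF = (j_\alpha)_! \CF_\alpha$ is bounded below, as claimed. The only step requiring genuine input (rather than formal manipulation) is the boundedness of compact D-modules on QCA stacks, which is the main substantive ingredient and the one potential obstacle; however, this is already contained in the cited results of Drinfeld--Gaitsgory and so the proposition follows essentially by citation and assembly.
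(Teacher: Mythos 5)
Your argument fails at the final step. You assert that $(j_\alpha)_!$ is \emph{t-exact} with respect to the standard t-structures, and this is not true. For an open embedding $j:U\hookrightarrow \CY$, the partially defined left adjoint $j_!$ of the t-exact functor $j^!=j^*$ is right t-exact but in general \emph{not} left t-exact. Already for schemes this can be seen via Verdier duality: $j_!$ is (on coherent objects) dual to $j_*$, and $j_*$ has left amplitude roughly the number of affines needed to cover $U$, so $j_!$ can produce nontrivial negative cohomology. The natural t-structure on D-modules corresponds under Riemann--Hilbert to the perverse t-structure, where $j_!$ for open embeddings is famously only right t-exact.

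This is not a minor omission — it is the entire content of the paper's proof. Because $\Bun_G$ is not quasi-compact, what must be controlled is the left amplitude of $j_!:\Dmod(U)\to\Dmod(U')$ as $U'$ ranges over larger and larger quasi-compact opens, and there is no a priori reason for this amplitude to stay bounded independently of $U'$. The paper's proof explicitly flags this as "what we are battling against" and resolves it by (1) exploiting that $\Bun_G$ has affine diagonal to arrange that the relevant morphisms become affine after smooth base change, (2) a Cousin filtration by intersections of an affine cover to bound the amplitude in terms of data depending only on $U$, and (3) invoking the generalized b-function lemma \cite[Theorem 3.4.1]{Ra} to get left t-exactness of the $!$-pushforward along the resulting affine morphisms. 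Your proposal supplies none of these ingredients; asserting that $j_!$ is t-exact simply assumes the nontrivial part of the conclusion. (Your first two steps — the colimit presentation $\Dmod(\Bun_G)\simeq\operatorname{colim}_\alpha\Dmod(U_\alpha)$ with compact objects of the form $(j_\alpha)_!\CF_\alpha$, and boundedness of compact objects on a QCA stack — are fine and coincide with the paper's setup.)
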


\begin{thm} \label{t:L coarse left exact}
The functor $\BL_{G,\on{coarse}}$ has a bounded cohomological amplitude on the left,  
i.e., there exists an integer $d$ so that $\BL_{G,\on{coarse}}[-d]$ is left t-exact. 
\end{thm}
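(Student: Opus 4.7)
\medskip

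The plan is to reduce the boundedness-below statement for $\BL_{G,\on{coarse}}$ to its restricted counterpart, where it becomes \thmref{t:L coarse left exact Nilp} of \cite{FR}. The bridge between the two statements is the check of eventual coconnectivity via !-fibers, formalized in the excerpt as \propref{p:when >0}: for any eventually coconnective stack $\CY$ (and $\LS_\cG$ has this property), there is an integer $\delta$ such that an object $\CE\in\QCoh(\CY)$ is in cohomological degrees $\ge -d$ whenever the !-fibers of $\CE$ at every field-valued point of $\CY$ lie in degrees $\ge -d+\delta$. Our task is therefore to bound the !-fibers of $\BL_{G,\on{coarse}}(\CF)$ uniformly in the point $\sigma\in\LS_\cG(k')$, as $\CF$ ranges over the heart of $\Dmod_{\frac{1}{2}}(\Bun_G)$.

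\medskip

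First I would observe that field-valued points of $\LS_\cG$ are the same as field-valued points of $\LS_\cG^{\on{restr}}$, since the two stacks have the same underlying classical reduced structure. Using the $\QCoh(\LS_\cG)$-linearity of $\BL_{G,\on{coarse}}$ (see \propref{p:L and Hecke}) and the definition of the restricted functor as
\[
\BL^{\on{restr}}_{G,\on{coarse}} \simeq (-)\underset{\QCoh(\LS_\cG)}\otimes \QCoh(\LS_\cG)_{\on{restr}} \circ \BL_{G,\on{coarse}},
\]
the !-fiber of $\BL_{G,\on{coarse}}(\CF)$ at such a $\sigma$ factors through the restricted version: it is canonically identified with the !-fiber of $\BL^{\on{restr}}_{G,\on{coarse}}(\CF')$ at the corresponding point of $\LS_\cG^{\on{restr}}$, where $\CF'$ is the image of $\CF$ in $\Dmod_{\frac{1}{2},\Nilp}(\Bun_G)$ under an appropriate localization (this is the analog, on the automorphic side, of tensoring with $\QCoh(\LS_\cG)_{\on{restr}}$). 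The latter localization is t-exact (or at least of bounded cohomological amplitude on the left), so it does not destroy the t-bound we need.

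\medskip

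The input from \cite{FR}, namely \thmref{t:L coarse left exact Nilp}, now says that $\BL^{\on{restr}}_{G,\on{coarse}}$ has bounded cohomological amplitude on the left. In particular, the !-fibers of $\BL^{\on{restr}}_{G,\on{coarse}}(\CF')$ at all field-valued points of $\LS_\cG^{\on{restr}}$ are uniformly bounded below (the amplitude bound passes to !-fibers after adding a constant that depends only on $\LS_\cG^{\on{restr}}$, not on $\sigma$). Combining this with the previous step, the !-fibers of $\BL_{G,\on{coarse}}(\CF)$ are uniformly bounded below. Invoking \propref{p:when >0} then gives the desired left amplitude bound for $\BL_{G,\on{coarse}}$ itself.

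\medskip

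The main obstacle is the middle step: identifying the !-fiber of $\BL_{G,\on{coarse}}(\CF)$ with that of its restricted avatar in a way that preserves amplitude estimates. This requires the $\QCoh(\LS_\cG)$-linearity of $\BL_{G,\on{coarse}}$ to be compatible with base change to field-valued points of $\LS_\cG^{\on{restr}}\hookrightarrow\LS_\cG$, and it requires the localization on the automorphic side (the ``$\Nilp$ truncation'') to have controlled t-behavior on compact objects. The remaining steps are, by comparison, formal: \propref{p:when >0} is a general fact about eventually coconnective stacks, and the hard content---the left t-boundedness of $\BL^{\on{restr}}_{G,\on{coarse}}$---is imported wholesale from \cite{FR}, which in turn rests on the microlocal and singular-support techniques of \cite{AGKRRV} and \cite{Lin}.
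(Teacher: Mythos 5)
Your proposal follows essentially the same route as the paper: reduce to !-fibers via \propref{p:when >0}, pass from $\LS_\cG$ to $\LS^{\on{restr}}_\cG$ using the fact that $k$-rational points of $\LS_\cG$ factor through the restricted substack (after a base-change to reduce to rational points), relate $\BL_{G,\on{coarse}}$ to $\BL^{\on{restr}}_{G,\on{coarse}}$ via the adjoints of the two inclusions, and import \thmref{t:L coarse left exact Nilp} from \cite{FR}. The ``main obstacle'' you flag is resolved exactly as you suspect: since $\iota$ is t-exact and fully faithful, its right adjoint $\iota^R$ is left t-exact (no shift needed, so your hedge can be dropped), and the commutation $\BL^{\on{restr}}_{G,\on{coarse}}\circ \iota^R\simeq (\iota^{\on{spec}})^R\circ \BL_{G,\on{coarse}}$ is formal from the tensor-product description \eqref{e:Dmod nilp as ten} together with $\QCoh(\LS_\cG)$-linearity of $\BL_{G,\on{coarse}}$ (which holds by construction, independently of \propref{p:L and Hecke}, which concerns $\BL_G$).
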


\begin{rem} \label{r:left ampl}
In the course of the proof we will see that the integer $d$ in \thmref{t:L coarse left exact} can be taken to be 
the dimension of the classical algebraic stack underlying $\LS_\cG$ plus $\dim(\Bun_{N,\rho(\omega_X)})$
minus $\dim(\Bun_G)$. 

\medskip

Note that when the genus $g$ of our curve is $>1$ and $G$ is semi-simple, the stack $\LS_\cG$ is a classical l.c.i., so its dimension 
equals the virtual dimension, i.e., $(2g-2)\cdot \dim(G)$. 

\medskip

When $G$ has a connected center (but $g$ is still $>1$) the dimension
of the classical prestack underlying $\LS_\cG$ is $(2g-2)\cdot \dim(G)+\dim(Z_G)$. 

\end{rem}

\sssec{}

It is likely that the integer $d$ in Remark \ref{r:left ampl} is not the sharp bound. 
For example, when $G=T$ is a torus, the functor $\BL_{G,\on{coarse}}$ is left t-exact as-is. We propose: 

\begin{quest}
What is the actual bound on the left amplitude of $\BL_{G,\on{coarse}}$?
\end{quest} 

\begin{rem} 
Note that \thmref{t:L coarse left exact} does \emph{not} imply that the functor $\BL_G$ has 
a bounded cohomological amplitude on the left. In fact, this amplitude \emph{is} unbounded,
unless $G$ is a torus. 

\medskip

Namely, we claim the ``constant sheaf"" $\ul{k}_{\Bun_G}\in \Dmod_{\frac{1}{2}}(\Bun_G)$ is sent by $\BL_G$ to an \emph{infinitely connective}
object, i.e., an object that lies in $\IndCoh_\Nilp(\LS_G)^{\leq -n}$ for any $n$. 

\medskip

Indeed, the statement is equivalent to
$\Psi_{\Nilp,\{0\}}\circ \BL_G(\ul{k}_{\Bun_G})=0$, which is a valid statement, since $\BL_{G,\on{coarse}}(\ul{k}_{\Bun_G})$ is indeed zero. 

\end{rem} 

\ssec{Proof of \propref{p:compact bdd below}}

Before we launch the proof, let us explain what we are battling against: the stack $\Bun_G$ is not quasi-compact, and we have
to estimate the cohomological amplitude of the !-direct image functors for open embeddings $U\hookrightarrow U'$,
for a fixed quasi-compact open substack $U$ and variable $U'$. 

\medskip

The idea of the proof is that (since $\Bun_G$ has an affine diagonal), 
these amplitudes only depend on $U$ and not on $U'$. 

\sssec{}

The proof given below applies for the category $\Dmod(\CY)$, where $\CY$ is any \emph{truncatable}
algebraic stack (see \cite[Sect. 0.2.3]{DG1} for what this means), such that each of its quasi-compact open substacks 
is a global quotient\footnote{It is likely that the latter assumption is not needed, and it suffices to assume that $\CY$ 
has an affine diagonal.} (i.e., is of the form $Z/H$, where $Z$ is a (separated) scheme and $H$ as affine algebraic group). 

\medskip

The stack $\Bun_G$ is truncatable by \cite[Theorem 0.2.5]{DG1}. And it is a standard fact that each of its
quasi-compact open substacks is a global quotient. 

\sssec{}

First, we remark that the assertion is clear when $\CY$ is quasi-compact. Indeed, in this case
the category $\Dmod(\CY)$ is compactly generated by objects of the form
$$\ind_r(\CM), \quad \CM\in \Coh(\CY),$$
and such objects are bounded on both sides.

\medskip

In the above formula, 
$$\ind_r: \IndCoh(\CY)\to \Dmod(\CY)$$
is the left adjoint of 
$$\oblv_r:\Dmod(\CY)\to \IndCoh(\CY),$$
see \cite[Sect. 5.1.5]{DG2}.

\sssec{}

Let now $\CY$ be an arbitrary truncatable algebraic stack. Write $\CY$ as a union of \emph{co-truncative}
open substacks 
$$U\overset{j}\hookrightarrow \CY,$$
so that the functor 
$$j_!:\Dmod(U)\to \Dmod(\CY),$$
left adjoint to the restriction functor $j^!=j^*$ is defined. 

\medskip

Then compact generators of $\Dmod(\CY)$ are of the form
$j_!(\CF_U)$, where $\CF_U$ are compact generators of $\Dmod(U)$.

\sssec{}

Write $U=Z/H$, and let $f$ denote the map $Z\to U$. Consider the \emph{partially defined} 
left adjoint $f_!$ of the functor
$$f^!:\Dmod(U)\to \Dmod(Z).$$

As in \cite[Sect. F.3.5]{AGKRRV}, we can take as compact generators of $\Dmod(U)$ objects of the form
$$f_!(\CF_Z), \quad \CF_Z\in \Dmod(Z)^c,$$
where $\CF_Z$ is such that the partially defined functor $f_!$ \emph{is} defined on 
$\CF_Z$.

\medskip

It suffices to show that each of the objects 
$$j_!(f_!(\CF_Z)),$$
for $\CF_Z$ as above, is bounded below. 

\medskip

Suppose that $Z$ can be covered by $n$ affines. We will show that if $\CF_Z\in \Dmod(Z)^{\geq n}$, then $j_!(f_!(\CF_Z))\in \Dmod(\CY)^{\geq 0}$. 
The assertion is equivalent to the fact that for any quasi-compact open $U'\supset U$, we have
$$j_!(f_!(\CF_Z))|_{U'}\in  \Dmod(U')^{\geq 0}.$$

Since the specified bound does not depend on $U'$, we can assume that $\CY$ itself is quasi-compact. 

\sssec{}

Denote $\wt{f}:=j\circ f$. Note that $j_!(f_!(\CF_Z))$ is isomorphic to the value on $\CF_Z$ of the partially defined
left adjoint $\wt{f}_!$ of $\wt{f}^!$. 

\medskip

Let $g:Y'\to \CY$ be a smooth cover, where $Y'$ is an affine scheme. Set
$$Z':=Z\underset{\CY}\times Y'.$$

\medskip

Denote by $g_Z$ the map $Z'\to Z$. Let $\wt{f}'$ denote the map $Z'\to Y'$. 
Since $g$ is smooth, the functors
$$g_Z^*:\Dmod(Z)\to \Dmod(Z') \text{ and } g^*:\Dmod(\CY)\to \Dmod(Y')$$
are well defined. 

\medskip

We obtain that the partially defined left adjoint $\wt{f}'_!$
of $\wt{f}'{}^!$ is well-defined on $g_Z^*(\CF_Z)$, and we have
$$g^*(\wt{f}_!(\CF_Z) \simeq \wt{f}'_!(g_Z^*(\CF_Z)).$$

\medskip

Hence, it suffices to show that for any 
$$\CF_{Z'}\in \Dmod(Z')^c\cap \Dmod(Z')^{\geq n},$$
on which the partially defined functor $\wt{f}'_!$ is defined, we have
$$\wt{f}'_!(\CF_{Z'})\in \Dmod(Y')^{\geq 0}.$$

\sssec{}

Note that the functor $\wt{f}'_!$ is always defined as a functor
$$\Dmod(Z')^c\to \on{Pro}(\Dmod(Y')^c),$$
and it is enough to show that this functor, shifted by $[-n]$, is left t-exact. 

\medskip

Let $Z_i$ be the $n$ affine schemes that cover $Z$, and let $Z'_i$ denote their preimages
in $Z$. For a subset $\ul{i}$ of indices, let $j'_{\ul{i}}$ denote the open embedding
$$\underset{i\in \ul{i}}\cap\, Z'_i=:Z'_{\ul{i}}\hookrightarrow Z'.$$

\medskip

By Cousin, for $\CF\in \Dmod(Z')^c$, the object $\wt{f}'_!(\CF)\in \on{Pro}(\Dmod(Y')^c)$ 
admits a canonical filtration (of length $n$), whose $m$'s subquotient is
$$\underset{|\ul{i}|=m}\oplus\, (\wt{f}'\circ j'_{\ul{i}})_!(\CF|_{Z_{\ul{i}}})[m].$$

Hence, it is enough to show that each of the functors
$$(\wt{f}'\circ j'_{\ul{i}})_!:\Dmod(Z'_{\ul{i}})\to \on{Pro}(\Dmod(Y')^c)$$
is left t-exact.

\sssec{}

Note that $\CY$ has an affine diagonal. Hence, the morphism $Y'\to \CY$ is affine,
and hence so is the projection $Z'\to Z$. We obtain that each $Z'_{\ul{i}}$ is an affine schenme,
and hence so is the morphism $\wt{f}'\circ j'_{\ul{i}}$. 

\medskip

Now the desired left t-exactness assertion follows from \cite[Theorem 3.4.1]{Ra}. 

\qed[\propref{p:compact bdd below}]

\ssec{Proof of \thmref{t:L coarse left exact}}

The idea of the proof is to reduce \thmref{t:L coarse left exact} to its restricted version, namely, \thmref{t:L coarse left exact Nilp},
using a Cousin-type argument. The latter is encapsulated by \propref{p:when >0} below. 

\sssec{}

We have the following assertion, proved in \secref{ss:proof >0}: 

\begin{prop} \label{p:when >0}
Let $\CY$ be an eventually coconnective algebraic stack. Let 
$\CM$ be an object of $\QCoh(\CY)$ such that for every field extension $k\subset k'$ and a $k'$-point $y$ of $\CY$, we have:
$$i_y^!(\CM')\in \Vect_{k'}^{\geq 0},$$
where:

\begin{itemize}

\item $\CM'$ denotes the pullback of $\CM$ along $\CY\underset{\Spec(k)}\times \Spec(k')=:\CY'\to \CY$;

\item $i_y$ denotes the map $\Spec(k')\to \CY'$ corresponding to $y$;

\smallskip

\item $i_y^!$ denotes the (not necessarily continuous) right adjoint of $(i_y)_*$,

\end{itemize}

\smallskip

\noindent then $\CM\in \QCoh(\CY)^{\geq -d}$,
where $d$ is the dimension of the classical algebraic stack underlying $\CY$. 

\end{prop}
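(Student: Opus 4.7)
The plan is to argue by induction on $d = \dim \CY$, reducing to the case of a smooth stack via recollement and then applying Nakayama-type reasoning at generic points of cohomology supports. Since the conclusion $\CM \in \QCoh(\CY)^{\geq -d}$ is Zariski-local on $\CY$, I may first assume $\CY$ is quasi-compact and Noetherian.

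For the inductive step, I choose a smooth dense open substack $j : \CV \hookrightarrow \CY$ of pure dimension $d$ with complementary closed substack $i : \CZ \hookrightarrow \CY$ of dimension at most $d-1$, and use the recollement triangle
\[
i_*\, i^!(\CM) \to \CM \to j_*(\CM|_\CV)
\]
to split the problem into two pieces. Since $!$-fibers compose---for $z$ a point of $\CZ$, $i_z^!(\CM) \simeq i_{z,\CZ}^!\bigl(i^!\CM\bigr)$---the object $i^!\CM$ inherits the hypothesis on $\CZ$, and by induction lies in $\QCoh(\CZ)^{\geq -(d-1)}$; $t$-exactness of $i_*$ then places the left term in $\QCoh(\CY)^{\geq -d}$. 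Combined with the fact that $j_*$ has bounded cohomological amplitude for a quasi-compact open embedding of algebraic stacks, the problem reduces to proving the proposition on the smooth stack $\CV$.

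For the smooth case, the regular-embedding formula supplies the key identification $i_y^!(\CM) \simeq i_y^*(\CM)[-c]$ up to a canonical line-bundle twist, for $y$ a point of codimension $c$ (after shrinking so that $\overline{\{y\}}$ is smooth and regularly embedded), so the hypothesis translates to $i_y^*(\CM) \in \Vect^{\geq -c}$. Suppose for contradiction that $H^{i_0}(\CM) \neq 0$ for some minimal $i_0 < -d$, and take $y_0$ a generic point of an irreducible component of $\supp H^{i_0}(\CM)$ of codimension $c \leq d$. The stalk $H^{i_0}(\CM)_{y_0}$ is a nonzero $R_{y_0}$-module supported at the closed point of $\Spec R_{y_0}$; over the regular local ring $R_{y_0}$ of dimension $c$, the top $\on{Tor}$ group $\on{Tor}^c\bigl(H^{i_0}(\CM)_{y_0}, k(y_0)\bigr)$ is nonzero, since any simple submodule killed by the maximal ideal contributes and $\on{Tor}^{c+1}$ vanishes. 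The hypertor spectral sequence, whose entry $E_2^{-c,i_0}$ receives no differentials in or out at total degree $i_0 - c$ (by minimality of $i_0$ and vanishing of $\on{Tor}$ beyond $c$), then yields $H^{i_0-c}\bigl(i_{y_0}^*\CM\bigr) \neq 0$; this contradicts $i_{y_0}^*\CM \in \Vect^{\geq -c}$ since $i_0 < -d \leq 0$ forces $i_0 - c < -c$.

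The principal obstacle is establishing that $\CM$ is bounded below in cohomology so that a minimal $i_0$ exists. The hypothesis at a generic point $\eta$ of $\CY$ (where $i_\eta^! = i_\eta^*$) forces $\CM_\eta \in \Vect^{\geq 0}$, providing a starting point, and the rest of the boundedness should propagate through the induction; organizing this rigorously requires care. A secondary technicality is justifying the formula $i_y^!\simeq i_y^*[-c]$ for arbitrary $k'$-points on a smooth stack, handled by a further stratification of $\CV$ into smooth locally closed substacks (on stacks with nontrivial automorphism groups, the ``codimension'' can be interpreted via the shift in the $!$-pullback, with the hypothesis becoming immediately useful when that shift is non-negative). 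A cleaner but heavier alternative would dualize against the dualizing complex $\omega_\CY$ of the eventually coconnective stack, concentrated in degrees $\geq -d$, to convert the $!$-fiber condition into a $*$-fiber condition on a Serre-dual object.
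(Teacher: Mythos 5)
Your proof takes a genuinely different route from the paper's, and it has a real gap at its decisive step. The paper reduces to an affine scheme $Y$ and applies the coniveau (Cousin) filtration: since this filtration has finite length $d+1$, it suffices to show that each graded piece $j_Z^*\circ(\wh{i}_Z)^!(\CM)$ lies in degrees $\geq -d$, and the paper computes these pieces using the base change along $k\hookrightarrow k(\eta_Z)$, which turns the generic point of $Z$ into a smooth $k(\eta_Z)$-rational point of $Z\times_k k(\eta_Z)$ where the hypothesis can be applied after a shift by $[\dim Z]$. This delivers the degree bound and boundedness-below simultaneously; no a priori boundedness of $\CM$ is needed. Your argument instead runs a Tor-spectral-sequence contradiction at a minimal $i_0$ with $H^{i_0}(\CM)\neq 0$. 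That Auslander--Buchsbaum/hyper-Tor computation is correct, and the recollement induction and the inheritance of the hypothesis to $i^!\CM$ on the closed stratum are sound. But the contradiction argument presupposes that a minimal $i_0$ exists, i.e.\ that $\CM$ is already bounded below --- which is, up to the explicit constant, exactly what you are trying to prove. You flag this as the ``principal obstacle'' and propose starting from the generic fibre, but the generic fibre controls only the stalk at the generic point, and nothing in your set-up rules out an unbounded-below $\CM$ satisfying the hypothesis. The paper's finite Cousin filtration is precisely what circumvents this circularity, and your proof does not resolve it.

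A secondary but real slip: after base-changing along $k\hookrightarrow k(y)$ to translate the hypothesis from $k'$-rational points of $\CY'$ to scheme-theoretic points of $Y$ (a step you elide but which is needed), one obtains $i_y^*(\CM)\in\Vect^{\geq -\dim Y}$ at \emph{every} point, not the sharper bound $\Vect^{\geq -c}$ for a point of codimension $c$ that you assert. Your own contradiction would then overshoot to $\CM\in\QCoh(Y)^{\geq 0}$, which is already false for $\CM=\CO_Y[\dim Y]$. The weaker bound $\geq -\dim Y$ still produces the contradiction for $i_0 < -d$, so this slip does not destroy the strategy, but it underlines that the passage from $k'$-rational points to scheme-theoretic points requires the explicit bookkeeping the paper supplies.
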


\begin{rem}
In practice, the fields $k'$ that will appear are fields of rational functions
on irreducible subschemes of an affine scheme $Y$ that smoothly covers $\CY$.
\end{rem}

\sssec{} \label{sss:red to rat}

Let us prove \thmref{t:L coarse left exact}.

\medskip

Let $d$ be the integer from \propref{p:when >0} for the stack $\LS_\cG$.  We will show that the functor
$\BL_{G,\on{coarse}}$ has an amplitude bounded on the left by $d-\dim(\Bun_G)+\dim(\Bun_{N,\rho(\omega_X)})$.

\medskip

Let $\CF$ be an object of $\Dmod_{\frac{1}{2}}(\Bun_G)$, which is cohomologically 
$\geq \dim(\Bun_{N,\rho(\omega_X)})-\dim(\Bun_G)$.
Applying \propref{p:when >0}, we need to show that for every algebraically
closed field extension $k\subset k'$ and a $k'$-point $\sigma$ of 
$$\LS'_\cG:=\LS_\cG\underset{\Spec(k)}\times \Spec(k'),$$
the object
$$i_\sigma^!(\BL_{G,\on{coarse}}(\CF)')\in \Vect_{k'}$$
belongs to $\Vect_{k'}^{\geq 0}$, where $\BL_{G,\on{coarse}}(\CF)'$ denotes the pullback of 
$\BL_{G,\on{coarse}}(\CF)$ to $\LS'_\cG$. 

\medskip

Base changing everything along $\Spec(k')\to \Spec(k)$, we can assume that $k'=k$,
so that $\sigma$ is a rational point of $\LS_\cG$. 

\sssec{}  \label{sss:restr}

Consider the sub prestack
$$\LS^{\on{restr}}_\cG \subset \LS_\cG,$$
see \cite[Sect. 4.1]{AGKRRV}.

\medskip

Let $$\QCoh(\LS_\cG)_{\on{restr}}\overset{\iota^{\on{spec}}} \hookrightarrow \QCoh(\LS_\cG)$$
be the (fully faithful) embedding of the subcategory consisting of objects in with set-theoretic support on $\LS^{\on{restr}}_\cG$. 
Let $(\iota^{\on{spec}})^R$ denote the right adjoint functor. 

\sssec{} \label{sss:red to restr}

Note that for a $k$-rational point $\sigma$ of $\LS_\cG$, the morphism $i_\sigma$ factors as 
$$\Spec(k) \to \LS^{\on{restr}}_\cG \to \LS_\cG,$$
where the second arrow is the map of \cite[Equation (4.2)]{AGKRRV}.

\medskip

Hence, the functor $(i_\sigma)_*$ factors via the subcategory $\QCoh(\LS_\cG)_{\on{restr}}$. Hence, the functor $\iota_\sigma^!$
factors as
$$(i_\sigma^!|_{\QCoh(\LS_\cG)_{\on{restr}}}) \circ (\iota^{\on{spec}})^R.$$

Since the functor $i_\sigma^!|_{\QCoh(\LS_\cG)_{\on{restr}}}$ is right t-exact, it suffices to show that
$$(\iota^{\on{spec}})^R(\BL_{G,\on{coarse}}(\CF))\in \Vect^{\geq 0}.$$

\sssec{} \label{sss:red to Nilp}

Let 
$$\Dmod_{\frac{1}{2},\Nilp}(\Bun_G) \overset{\iota}\hookrightarrow \Dmod_{\frac{1}{2}}(\Bun_G)$$
be the full subcategory consisting of D-modules with singular support in the global nilpotent cone.

\medskip

According to \cite[Proposition 14.5.3]{AGKRRV}, we have
\begin{equation} \label{e:Dmod nilp as ten}
\Dmod_{\frac{1}{2},\Nilp}(\Bun_G) \simeq \Dmod_{\frac{1}{2}}(\Bun_G)\underset{\QCoh(\LS^{\on{restr}}_\cG)}\otimes 
\QCoh(\LS_\cG)_{\on{restr}}.
\end{equation} 

Hence, the restriction of $\BL_{G,\on{coarse}}$ to $\Dmod_{\frac{1}{2},\Nilp}(\Bun_G)$ is a functor
$$\BL^{\on{restr}}_{G,\on{coarse}}: \Dmod_{\frac{1}{2},\Nilp}(\Bun_G) \to \QCoh(\LS_\cG)_{\on{restr}}.$$

Furthermore, we have,
$$\BL^{\on{restr}}_{G,\on{coarse}}\circ \iota^R\simeq (\iota^{\on{spec}})^R\circ \BL_{G,\on{coarse}}.$$

Since the functor $\iota$ is t-exact, the functor $\iota^R$ is left t-exact. Hence, it suffices to show that 
show that the functor $\BL^{\on{restr}}_{G,\on{coarse}}[\dim(\Bun_G)-\dim(\Bun_{N,\rho(\omega_X)})]$ is left t-exact. 

\medskip

However, this is the ``left t-exactness half" of the following result of \cite{FR} (see Sect. 1.6.2 in {\it loc. cit.}): 

\begin{thm} \label{t:L coarse left exact Nilp}
The functor
$$\BL^{\on{restr}}_{G,\on{coarse}}[\dim(\Bun_G)-\dim(\Bun_{N,\rho(\omega_X)})]:\Dmod_{\frac{1}{2},\Nilp}(\Bun_G) \to
\QCoh(\LS^{\on{restr}}_\cG).$$
is t-exact. 
\end{thm}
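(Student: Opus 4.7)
The plan is to establish t-exactness by separating the right and left halves and relating both to cohomological amplitude of the Whittaker coefficient functor $\on{coeff}^{\on{Vac,glob}}$ on the nilpotent-singular-support category. By the defining characterization of $\BL_{G,\on{coarse}}$ recalled in \secref{ss:coarse}, the composition $\Gamma(\LS^{\on{restr}}_\cG,-)\circ\BL^{\on{restr}}_{G,\on{coarse}}$ recovers $\on{coeff}^{\on{Vac,glob}}|_{\Dmod_{\frac{1}{2},\Nilp}(\Bun_G)}$. Using the $\QCoh(\LS^{\on{restr}}_\cG)$-linearity from \propref{p:L and Hecke} together with the fact that $\QCoh(\LS^{\on{restr}}_\cG)$ admits a conservative family of !-fiber functors at points, one reduces t-exactness of the shifted functor to a collection of local amplitude assertions about the Whittaker coefficient computation.

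For right t-exactness after the shift $\dim(\Bun_G)-\dim(\Bun_{N,\rho(\omega_X)})$, I would unpack
$$\on{coeff}^{\on{Vac,glob}}(\CF)\simeq \on{C}^\cdot\bigl(\Bun_{N,\rho(\omega_X)},\fp^!(\CF)\overset{*}\otimes \on{exp}_\chi\bigr)$$
and observe that $\fp^!$ is a !-pullback along a smooth morphism (shifted by the relative dimension $\dim(\Bun_{N,\rho(\omega_X)})-\dim(\Bun_G)$, which accounts precisely for the specified shift), that $\overset{*}\otimes \on{exp}_\chi$ is t-exact up to a line-bundle twist, and that $*$-pushforward to a point has bounded right amplitude once the support of $\CF$ is controlled using truncatability of $\Bun_G$, in the spirit of \propref{p:compact bdd below}.

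The crux, and what I expect to be the main obstacle, is left t-exactness; here the nilpotent singular-support hypothesis is indispensable, since without it constant-sheaf type objects would force cohomology in arbitrarily negative degrees. The strategy, building on \cite{AGKRRV} and \cite{Lin}, is to exploit non-degeneracy of the Whittaker character $\chi$: for $\CF\in \Dmod_{\frac{1}{2},\Nilp}(\Bun_G)^{\geq 0}$, the singular support of $\fp^!(\CF)$ is controlled by the pullback of $\Nilp$ under the cotangent map induced by $\fp$, and the twist by $\on{exp}_\chi$ shifts this support by the covector associated with $\chi$. The key point is that $\Nilp$ shifted by the Whittaker covector meets the zero section only over a distinguished locus of $\Bun_{N,\rho(\omega_X)}$; a Cousin/stratification argument along this stack, combined with the local-to-global Whittaker comparison of \cite{Lin}, then gives the desired vanishing in negative cohomological degrees. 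Equivalently, one may pass to the !-fiber at each point $\sigma\in\LS^{\on{restr}}_\cG$ and invoke perversity of the Whittaker coefficient of the Hecke eigencomponent of $\CF$ at $\sigma$, whose control is furnished by the spectral action compatibility.

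Putting the two halves together yields t-exactness of $\BL^{\on{restr}}_{G,\on{coarse}}[\dim(\Bun_G)-\dim(\Bun_{N,\rho(\omega_X)})]$. The principal technical difficulty is justifying the left-exactness step sharply; this requires the full interplay between the singular support formalism on $\Bun_G$, the exponential character sheaf $\on{exp}_\chi$, and the non-quasi-compact geometry of $\Bun_{N,\rho(\omega_X)}$, which is exactly what the restricted/nilpotent framework of \cite{AGKRRV} and \cite{Lin} makes tractable.
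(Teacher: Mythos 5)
The paper does not prove this theorem: it is quoted verbatim from \cite{FR} (Faergeman--Raskin, Sect.~1.6.2), and the text immediately preceding the statement says as much. There is therefore no internal argument in the paper to compare your proposal against; the paper's ``proof'' is a citation.

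That said, your sketch is in the right neighborhood in spirit: the actual proof in \cite{FR} does revolve around the Whittaker coefficient functor, its interaction with singular support, and methods from \cite{AGKRRV} and \cite{Lin}, which you correctly flag. But as written your proposal has real gaps and would not stand on its own. First, reducing t-exactness to $!$-fiber statements at points via ``a conservative family of $!$-fiber functors'' is not enough: on a stack such as $\LS^{\on{restr}}_\cG$, fiber statements control amplitude only up to a dimension shift (this is precisely what Propositions~\ref{p:when >0} and \ref{p:when <0} say), so one cannot deduce \emph{exact} t-exactness from fiberwise data without further input. Second, your right t-exactness step pushes forward from $\Bun_{N,\rho(\omega_X)}$, which is not quasi-compact; asserting bounded right amplitude of $*$-pushforward to a point by appealing to truncatability of $\Bun_G$ glosses over the genuinely non-quasi-compact geometry of $\Bun_{N,\rho(\omega_X)}$, where such bounds are delicate and in fact rely on the non-degeneracy of $\chi$ via cleanness/contraction arguments. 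Third, and decisively, the left t-exactness step — which you correctly identify as the heart of the matter — is only sketched (``a Cousin/stratification argument ... then gives the desired vanishing''), with no actual argument. Since this is exactly the nontrivial content of the theorem, what you have is a plausible roadmap towards \cite{FR}, not a proof.
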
 

\qed[\thmref{t:L coarse left exact}]

\begin{rem} \label{r:right ampl}
We emphasize that, unlike $\BL^{\on{restr}}_{G,\on{coarse}}$, the functor $\BL_{G,\on{coarse}}$ (shifted cohomologically by 
$[\dim(\Bun_G)-\dim(\Bun_{N,\rho(\omega_X)})]$), is \emph{not} right t-exact. One can see
this already when $G=\BG_m$, in which case $\BL_{G,\on{coarse}}$ is the Fourier-Laumon transform.

\medskip

Note, however, that \propref{p:bdd right} says that its cohomological amplitude on the right 
is bounded. 

\end{rem}

\ssec{Proof of \propref{p:when >0}} \label{ss:proof >0}

\sssec{}

Let $f:Y\to \CY$ be a smooth cover of $\CY$ by an affine scheme. Since
$$\CM\in \QCoh(\CY)^{\geq 0}\, \Leftrightarrow \, f^!(\CM)[-\dim(Y/\CY)]\in \QCoh(Y)^{\geq 0},$$
we can assume that $\CY=Y$ is an affine scheme.

\sssec{}

Let $Z\overset{i_Z}\hookrightarrow Y$ be an 
irreducible subvariety and let 
$$\eta_Z \overset{j_Z}\hookrightarrow Z$$
be the embedding of its generic point. 

\medskip

Let 
$$\QCoh(Y)_Z\overset{(\wh{i}_Z)_!}\hookrightarrow \QCoh(Y)$$
be the embedding of the full subcategory consisting of objects set-theoretically
supported on $Z$. Let $(\wh{i}_Z)^!$ denote the right adjoint of $(\wh{i}_Z)_!$.

\medskip

Let 
$$j_Z^*:\QCoh(Y)_Z\rightleftarrows \QCoh(Y)_{\eta_Z}:(j_Z)_*$$
be the localization of $\QCoh(Y)_Z$ at the generic point.

\medskip

Applying Cousin decomposition, we obtain that $\CM\in \QCoh(Y)^{\geq -d}$ if and
only if for every $Z$ as above,
\begin{equation} \label{e:estim Z hat}
j_Z^*\circ (\wh{i}_Z)^!(\CM)\in \QCoh(Y)^{\geq -d}_{\eta_Z}.
\end{equation} 

\sssec{}

We can represent the formal completion of $Z$ in $Y$ as
$$\underset{n}{\on{colim}}\, Z_n,$$
where $Z_n\overset{i_{Z_n}}\hookrightarrow Y$ are regularly embedded closed 
subschemes of $Y$ (see. e.g., \cite[Proposition 6.7.4]{GaRo1}).

\medskip

We have
$$(\wh{i}_Z)^!(\CM) \simeq \underset{n}{\on{colim}}\, (i_{Z_n})_*\circ (i_{Z_n})^*(\CM).$$

Hence, in order to establish \eqref{e:estim Z hat}, it suffices to show that for every $n$ above, we have
\begin{equation} \label{e:estim Zn}
 j_Z^*\circ (i_{Z_n})^!(\CM)\in \QCoh(\eta_{Z_n})^{\geq -d},
\end{equation} 
where $\eta_{Z_n}$ is the localization of $Z_n$ at $\eta_Z$. 

\sssec{}

Let $K$ be the field of rational functions on $Z$, and let $\eta_{Z_n}=\Spec(A_{Z_n})$. Note, however, that since $Y$ is eventually,
coconnective, $A_{Z_{n}}$ is an extension of finitely many copies of $K$ with non-positive shifts. 

\medskip

Hence, in order to establish \eqref{e:estim Zn}, it suffices to show that 
\begin{equation} \label{e:estim Z}
 j_Z^*\circ i_Z^!(\CM)\in \Vect_{K}^{\geq -d}.
\end{equation} 

\sssec{}

Denote
$$Y':=Y\underset{\Spec(k)}\times \Spec(K_Z),\,\, Z':=Z\underset{\Spec(k)}\times \Spec(K_Z);$$
let $i'_Z$ denote the embedding $Z'\to Y'$, obtained by base-changing $i_Z$. 
Let $\CM'$ denote the pullback of $\CM$ to $Y'$. 

\medskip

Note that $Z'$ has a canonical $K$-rational point $z_{\on{can}}$. Denote by
$i_{\on{can}}$ the corresponding morphism $\Spec(K) \to Z'$. 

\medskip

The morphism 
$j_Z$ factors as
$$\Spec(K) \overset{i_{\on{can}}}\longrightarrow Z' \to Z.$$

Hence, 
$$j_Z^*\circ i_Z^!(\CM)\simeq (i_{\on{can}})^*\circ (i'_Z)^!(\CM').$$

\sssec{}

Note, however, that $z_{\on{can}}$ is a smooth point of $Z'$. Hence,
$$(i_{\on{can}})^* \simeq (i_{\on{can}})^![\dim(Z)]$$
(up to a determinant line, which we ignore).

\medskip

Hence, we can further rewrite
$$j_Z^*\circ i_Z^!(\CM)\simeq (i_{\on{can}})^!\circ (i'_Z)^!(\CM')[\dim(Z)]=
(i'_Z\circ i_{\on{can}})^!(\CM)[\dim(Z)].$$

\sssec{}

Now, the 
condition of the proposition implies that
$$(i'_Z\circ i_{\on{can}})^!(\CM)\in \Vect_K^{\geq 0}.$$

Hence,
$$(i'_Z\circ i_{\on{can}})^!(\CM)[\dim(Z)]\in \Vect_K^{\geq -d},$$
as desired. 

\qed[\propref{p:when >0}]

\ssec{Proof of \propref{p:bdd right}} \label{ss:bdd right}

\sssec{}

We have the following counterpart of \propref{p:when >0} for right t-exactness:

\begin{prop} \label{p:when <0}
Let $\CY$ be an eventually coconnective algebraic stack. There exists an integer $d$ such that the
following holds: 

\medskip

If $\CM$ be an object of $\QCoh(\CY)$ such that for every field extension $k\subset k'$ and a $k'$-point $y$ of $\CY$, we have:
$$i_y^*(\CM')\in \Vect_{k'}^{\leq 0}.$$
Then $\CM\in \QCoh(\CY)^{\leq d}$. 
\end{prop}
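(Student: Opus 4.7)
My plan is to mirror the proof of \propref{p:when >0}, dualizing at each step from $!$-fibers and lower bounds to $*$-fibers and upper bounds.

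First, I would reduce to the case $\CY = Y$ affine by choosing a smooth affine cover $f\colon Y \to \CY$. The functor $f^*$ is t-exact and faithful, so $\CM \in \QCoh(\CY)^{\leq d}$ if and only if $f^*(\CM) \in \QCoh(Y)^{\leq d}$. The hypothesis on $*$-fibers transfers: any $k'$-point of $\CY$ lifts (after enlarging $k'$ by a residue field of a fiber) to a $k'$-point of $Y$, and $*$-fibers commute with smooth pullback, so the control on $*$-fibers passes to $f^*(\CM)$. Any bound obtained on $Y$ transfers back to $\CY$.

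Next, for $Y$ affine eventually coconnective with classical part of dimension $d_0$, I would argue by detecting the top cohomology sheaf of $\CM$ via a spectral sequence at a point of its support. Suppose $i$ is the maximal cohomological degree with $\CH^i(\CM) \neq 0$, and pick a field-valued point $y \in Y$ in the support of $\CH^i(\CM)$. Consider the convergent spectral sequence
\[
E_2^{p,q} = \on{Tor}_{-p}^{\CO_Y}(\CH^q(\CM), \kappa(y)) \Rightarrow \CH^{p+q}(i_y^* \CM).
\]
The corner term $E_2^{0,i} = \CH^i(\CM) \otimes_{\CO_Y} \kappa(y)$ is nonzero by the choice of $y$. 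Outgoing differentials from $(0,i)$ have target in $p > 0$ and so vanish; incoming differentials arrive from $(-r, i+r-1)$ with $q = i+r-1 > i$, which vanish by maximality of $i$. Hence $E_\infty^{0,i} \neq 0$, forcing $\CH^i(i_y^* \CM) \neq 0$. The hypothesis $i_y^*\CM \in \Vect_{k'}^{\leq 0}$ then gives $i \leq 0$, so in fact $\CM \in \QCoh(\CY)^{\leq 0}$ in the classical case.

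In the eventually coconnective but non-classical setting, the analog requires one to account for the non-trivial amplitude of $\CO_\CY$ in the Tor terms. The integer $d$ in the statement should be taken to depend on this amplitude together with $d_0$, paralleling the role of $\dim(\CY^{cl})$ in \propref{p:when >0}. The main subtlety I would expect is the convergence of the spectral sequence when $\CM$ has potentially unbounded positive cohomology; however, the corner-term analysis above only involves finitely many differentials around $(0,i)$, and this local finiteness makes the argument robust. A cleaner alternative, in parallel with the Cousin reduction used for \propref{p:when >0}, would be to write the formal completion $\widehat{Z}$ of each irreducible subvariety as a colimit of regular embeddings $Z_n \hookrightarrow Y$ and compare $(i_{Z_n})^*$ with $(i_{Z_n})^!$ at smooth points via the appropriate determinant shift; this bookkeeping (rather than the geometric input) is where I expect the real work to lie.
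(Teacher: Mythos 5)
Your primary route (the Tor spectral sequence at a judiciously chosen point) has two genuine gaps, and they are not the ones you flag.

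First, by choosing ``the maximal cohomological degree $i$ with $\CH^i(\CM)\ne 0$'' you are assuming that $\CM$ is cohomologically bounded above, which is essentially the conclusion of the proposition. The statement has to apply to a priori unbounded $\CM$ (in the application to \propref{p:bdd right}, $\CM=\BL_{G,\on{coarse}}(\CF)$ for $\CF$ compact, and no upper bound is known yet --- that is precisely what is being proved). The remark that ``the corner-term analysis only involves finitely many differentials'' does not address this: if $\CM$ is unbounded above there is no corner to analyze, and for any fixed $(0,i)$ the incoming terms $E_2^{-r,i+r-1}$ involve $\CH^{i+r-1}(\CM)$ and need not vanish.

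Second, even granting boundedness, the step ``pick $y$ in the support of $\CH^i(\CM)$, so the corner term $\CH^i(\CM)\otimes_{\CO_Y}\kappa(y)$ is nonzero'' is false for modules that are not finitely generated, and $\QCoh(Y)^\heartsuit$ contains such. Take $Y=\BA^1$ and $\CM=(k(t)/k[t])[-1]$: one checks that $i_y^*(\CM)\in\Vect_{k'}^{\leq 0}$ for \emph{every} field-valued point (the $*$-fiber is $k'$ in degree $0$ at an algebraic point and $0$ at a transcendental one, because $k(t)/k[t]$ is divisible so $\on{Tor}_0$ dies while $\on{Tor}_1$ survives), yet $\CM\in\QCoh(Y)^{\leq 1}\setminus\QCoh(Y)^{\leq 0}$. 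This shows your predicted bound $d=0$ in the classical case is wrong, and also that the $\dim$-shift is not, as your last paragraph suggests, an artifact of derived structure on $\CO_\CY$: it is already forced for classical smooth $Y$ by the presence of $\on{Tor}$ terms in higher degree.

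Your ``cleaner alternative'' is in fact the paper's proof. The paper runs the Cousin reduction of \propref{p:when >0} verbatim, taking $d$ to be the dimension of the classical scheme underlying a smooth affine cover $Y\to\CY$, and records the single change as the formula $j_Z^*\circ(i_{Z_n})^!\simeq j_Z^*\circ(i_{Z_n})^*[-\on{codim}(Z)]$ --- precisely the determinant-shift comparison you anticipated, and with essentially no further bookkeeping. So the route you flagged as an afterthought is the correct one; the route you led with does not close.
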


We take $d$ to be the dimension of the classical scheme
underlying an affine smooth cover $Y\to \CY$. 

\medskip 

The proof is parallel to that of \propref{p:when >0}. The only difference is that we use the fact that 
$$j_Z^*\circ (i_{Z_n})^!\simeq j_Z^*\circ (i_{Z_n})^*[-\on{codim}(Z)].$$

\sssec{}

We proceed to the proof of \propref{p:bdd right}. As a main ingredient we will use 
the fact that the functor 
$$\iota^R:  \Dmod_{\frac{1}{2}}(\Bun_G) \to \Dmod_{\frac{1}{2},\Nilp}(\Bun_G)$$
has a bounded cohomological dimension on the right, see \cite[Corollary 14.5.5 and Proposition 17.3.10]{AGKRRV}. 
Let $d'$ denote this bound. 

\medskip

We will show that the functor $\BL_{G,\on{coarse}}$ has a cohomological amplitude bounded on the right by
$d+d'+\dim(\Bun_G)-\dim(\Bun_{N,\rho(\omega_X)})$, where $d$ is the integer from \propref{p:when <0} for the stack $\LS_\cG$. 

\sssec{}

Applying \propref{p:when <0} and arguing as in \secref{sss:red to rat}, it suffices to show that for a $k$-rational
point $\sigma$ of $\LS_\cG$, the functor
$$i_\sigma^*\circ \BL_{G,\on{coarse}}[d'+\dim(\Bun_G)-\dim(\Bun_{N,\rho(\omega_X)})]$$
is right t-exact.

\medskip

Note that the functor $i_\sigma^*$ also factors as
$$(i_\sigma^*|_{\QCoh(\LS_\cG)_{\on{restr}}})\circ (\iota^{\on{spec}})^R,$$
while the functor $i_\sigma^*|_{\QCoh(\LS_\cG)_{\on{restr}}}$ is right t-exact. 

\medskip

Hence, it is enough to show that the functor
$$(\iota^{\on{spec}})^R \circ \BL_{G,\on{coarse}}[d'+\dim(\Bun_G)-\dim(\Bun_{N,\rho(\omega_X)})]$$
is right t-exact. 

\sssec{}

We rewrite $(\iota^{\on{spec}})^R \circ \BL_{G,\on{coarse}}$ as 
$$\BL_{G,\on{course}}^{\on{restr}}\circ \iota^R.$$

Now, the desired assertion follows from the fact that $\iota^R[d']$ is right t-exact and 
the fact that $\BL^{\on{restr}}_{G,\on{coarse}}[\dim(\Bun_G)-\dim(\Bun_{N,\rho(\omega_X)})]$ is right t-exact
(this is the ``right t-exactness half" of \thmref{t:L coarse left exact Nilp}). 

\qed[\propref{p:bdd right}]

\section{Geometric Langlands functor in the Betti context} \label{s:Betti}

In this section we take our ground field $k$ to be $\BC$. Throughout this section we will  work with the ``big" category of Betti 
sheaves of $\sfe$-vector spaces (see \cite[Appendix G]{AGKRRV}), where $\sfe$ is an arbitrary field of coefficients 
(assumed of characteristic zero). 

\medskip

We will construct the Langlands functor in the Betti setting. 

\medskip

Once the functor is constructed, we will formulate a theorem to the effect that this functor is an equvalence
if and only if its de Rham counterpart is. 

\ssec{The category with nilpotent singular support}

In this subsection we will introduce (following \cite{BZN}) the automorphic category in the Betti context. 

\sssec{}

Note that since $\det^{\frac{1}{2}}_{\Bun_G}$ is a gerbe with respect to $\mu_2\simeq \BZ/2\BZ$, it gives rise to an \'etale $\sfe^\times$-gerbe
for any field $\sfe$ of characteristic different from 2.

\medskip

Hence, it makes sense to consider the category $\Shv^{\on{Betti}}_{\frac{1}{2}}(\Bun_G)$ of sheaves of $\sfe$-vector spaces in the classical
topology on $\Bun_G$, twisted by $\det^{\frac{1}{2}}_{\Bun_G}$.

\medskip

Note, however, that by \secref{sss:pfaff}, we can identify 
$$\Shv^{\on{Betti}}_{\frac{1}{2}}(\Bun_G)\simeq \Shv^{\on{Betti}}(\Bun_G).$$

\sssec{}

Let 
$$\Shv^{\on{Betti}}_{\frac{1}{2},\Nilp}(\Bun_G) \overset{\bi}\hookrightarrow \Shv^{\on{Betti}}_{\frac{1}{2}}(\Bun_G)$$
be the full subcategory consisting of sheaves with singular support contained in $\Nilp$.

\medskip

According to \cite[Sect. 18.2.6]{AGKRRV}, the functor $\bi$ admits a left adjoint, to be denoted $\bi^L$. 

\sssec{} \label{sss:betti comp gen}

The functor $\bi^L$ allows us to construct compact objects in $\Shv^{\on{Betti}}_{\frac{1}{2}}(\Bun_G)$.

\medskip

For a stack $\CY$ and a rational point $y$, let $\delta_y\in \Shv^{\on{Betti}}(\CY)$ be the corresponding
$\delta$-function object, i.e., $(i_y)_!(\sfe)$, where $i_y$ be the morphism
$$\on{pt}\to \CY$$
corresponding to $y$.

\medskip 

According to \cite[Proposition G.3.5]{AGKRRV}, objects of the form 
$$\bi^L(\delta_y), \quad y\in \Bun_G$$
form a set of compact generators of 
$\Shv^{\on{Betti}}_{\frac{1}{2},\Nilp}(\Bun_G)$.

\begin{rem}
Note, however, that objects $\delta_y\in \Shv^{\on{Betti}}_{\frac{1}{2}}(\Bun_G)$ are \emph{not}
compact, and that the category $\Shv^{\on{Betti}}_{\frac{1}{2}}(\Bun_G)$ itself is \emph{not} compactly generated.
\end{rem} 

\ssec{The Hecke action in the Betti context}

\sssec{}

Let $\Rep(\cG)^{\on{Betti}}_\Ran$ be the Betti version of the category $\Rep(\cG)_\Ran$, defined as in 
\cite[Remark 11.1.10]{AGKRRV}. 

\medskip

As in the de Rham setting, we have a localization functor
\begin{equation} \label{e:loc Betti}
\Loc_\cG^{\on{spec}}:\Rep(\cG)^{\on{Betti}}_\Ran\to \QCoh(\LS^{\on{Betti}}_\cG),
\end{equation} 
which admits a fully faithful continuous right adjoint.

\sssec{}

We have a canonically defined monoidal action of $\Rep(\cG)^{\on{Betti}}_\Ran$ on $\Shv^{\on{Betti}}_{\frac{1}{2}}(\Bun_G)$,
which preserves the subcategory
$\Shv^{\on{Betti}}_{\frac{1}{2},\Nilp}(\Bun_G)$.

\medskip

The following is a key observation from \cite{NY} (see also \cite[Theorem 18.1.4]{AGKRRV}):

\begin{prop}
The action of $\Rep(\cG)^{\on{Betti}}_\Ran$ on $\Shv^{\on{Betti}}_{\frac{1}{2}}(\Bun_G)$ 
factors via the localization \eqref{e:loc Betti}. 
\end{prop}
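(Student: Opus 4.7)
The plan is to adapt the argument of \cite[Corollary 4.5.5]{Ga2} (recorded in this paper as \thmref{t:spectral decomp}) to the Betti setting. Since $\Loc_\cG^{\on{spec}}$ is a symmetric monoidal localization (its continuous right adjoint being fully faithful), it presents $\QCoh(\LS^{\on{Betti}}_\cG)$ as the quotient of $\Rep(\cG)^{\on{Betti}}_\Ran$ by the kernel of $\Loc_\cG^{\on{spec}}$; the claim therefore reduces to showing that this kernel acts trivially on $\Shv^{\on{Betti}}_{\frac{1}{2}}(\Bun_G)$.

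First, I would unpack the factorization structure: $\Rep(\cG)^{\on{Betti}}_\Ran$ is the Ran-space assembly of $\Rep(\cG)$, and the kernel of $\Loc_\cG^{\on{spec}}$ is generated, as a localizing tensor ideal, by the Ran-space ``coincidence'' relations---namely, the identifications of an object placed at a point $x \in X$ with the same object placed at a nearby point $y$, via factorization isomorphisms. This description is the Betti analogue of the one in \cite[Sect. 4.3]{Ga2}.

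Next, I would invoke the Betti geometric Satake equivalence of Nadler and Yun \cite{NY} to obtain a local-to-global description of the Hecke action on $\Shv^{\on{Betti}}_{\frac{1}{2}}(\Bun_G)$. The key point is that the Hecke action is itself factorizable in the Ran variable, and the induced compatibilities are exactly what is needed to kill the coincidence relations generating the kernel of $\Loc_\cG^{\on{spec}}$. Hence the kernel acts by zero, as required.

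The main obstacle is making the factorization-compatibility of the Hecke action rigorous in the Betti setting---in particular, the interplay between the fusion of Hecke correspondences (as points collide on $X$) and the constructible/topological structure of $\Rep(\cG)^{\on{Betti}}_\Ran$. In the de Rham case one has access to powerful infinitesimal methods (e.g., D-modules on the Ran space and Kac--Moody localization); in the Betti setting one must instead rely on the combinatorial-topological content of Betti Satake, which is carried out in \cite{NY} and reformulated in \cite[Theorem 18.1.4]{AGKRRV}. Once these structures are in place, the desired factorization follows formally, exactly as in the de Rham argument.
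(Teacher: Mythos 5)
The paper does not prove this proposition; it is presented as a citation of \cite{NY} (see also \cite[Theorem 18.1.4]{AGKRRV}), so there is no in-paper argument against which to test your proposal line-by-line. That said, let me assess what you've written as a proof sketch.

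Your first paragraph is fine and is the correct formal reduction: since $\Loc_\cG^{\on{spec}}$ is a symmetric monoidal localization, $\QCoh(\LS^{\on{Betti}}_\cG)$ is the Verdier quotient of $\Rep(\cG)^{\on{Betti}}_\Ran$ by the kernel of $\Loc_\cG^{\on{spec}}$, and factoring the action through the quotient is equivalent to the kernel acting trivially. No objection there.

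The second paragraph is where the proposal becomes unreliable. Your claim that the kernel of $\Loc_\cG^{\on{spec}}$ is ``generated, as a localizing tensor ideal, by the Ran-space coincidence relations'' is an assertion that is neither found in \cite{Ga2} nor obviously true, and is not the way the kernel is analyzed in \cite{NY} or in \cite[Sect.~15]{AGKRRV}. The insertion/coincidence maps are part of the \emph{unital} factorization structure on $\Rep(\cG)^{\on{Betti}}_\Ran$; the localization $\Loc_\cG^{\on{spec}}$ is not obtained by ``imposing'' these as relations but by a colimit over the (contractible) Ran space. The actual technical engine---both in Drinfeld's de Rham argument and in the Betti setting of \cite{NY}---is the \emph{Beilinson projector} (see \cite[Sect.~15]{AGKRRV}): one constructs an idempotent monoidal endofunctor of the Ran Hecke category whose image is $\QCoh(\LS_\cG)$, and one shows, using unitality together with the contractibility of $\Ran(X)$, that this idempotent acts as identity on the automorphic category. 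This is a genuinely different shape of argument from ``identify generators of the kernel and show each acts trivially'': one never explicitly describes the kernel, and trying to do so as you propose is a known sticking point.

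Your third paragraph correctly anticipates that the substantive work lies in making the fusion of Hecke correspondences compatible with the Ran-space structure in the Betti setting, and that this is what \cite{NY} accomplishes. But as written, the proposal ultimately defers the entire mathematical content to \cite{NY}---which is exactly what the paper does, without the intermediate (and, as noted, shaky) characterization of the kernel. So the proposal is not incorrect so much as it is not a proof: strip away the imprecise middle step and what remains is the same citation the paper makes. If you want to sketch a proof rather than cite one, the Beilinson projector formalism of \cite[Sect.~15]{AGKRRV} (which is set up so as to work uniformly across sheaf-theoretic contexts) would be the right scaffold, and the contractibility of $\Ran(X)$---not any ``coincidence relations''---is the topological input that makes it go.
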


As a result, we obtain that the category $\Shv^{\on{Betti}}_{\frac{1}{2},\Nilp}(\Bun_G)$ carries
a canonically defined action of the monoidal category $\QCoh(\LS^{\on{Betti}}_\cG)$.

\ssec{The vacuum Poincar\'e sheaf in the Betti context} \label{ss:Poinc Betti}

Our current goal is to construct the object 
\begin{equation} \label{e:Poinc Betti}
\on{Poinc}^{\on{Vac,glob}}_{!,\Nilp}\in \Shv^{\on{Betti}}_{\frac{1}{2},\Nilp}(\Bun_G).
\end{equation}

The slight hiccup is that the exponential sheaf is not defined as an object of
$\Shv^{\on{Betti}}(\BG_a)$. In order to circumvent this, we will first apply the procedure of $\BG_m$-averaging.

\sssec{}

Define the object
$$\on{exp}/\BG_m\in \Shv^{\on{Betti}}(\BG_a/\BG_m)$$
to be the *-extension of 
$$\sfe[-1]\in \Vect=\Shv^{\on{Betti}}(\on{pt})$$ along the open embedding
$$\on{pt}\simeq (\BG_a-\{0\})/\BG_m\hookrightarrow \BG_a/\BG_m.$$

\begin{rem} \label{r:Kir}
Note that when $\sfe=\BC$, under the Riemann-Hilbert equivalence, the object $\on{exp}/\BG_m$
corresponds to the !-direct image of $\on{exp}$ along the projection
$$\BG_a\to \BG_a/\BG_m.$$
\end{rem}

\sssec{}

We will now consider a variant of this construction. Consider the stack 
$$\underset{i}\Pi\, \BG_a/T,$$
where $T$ acts on $\underset{i}\Pi\, \BG_a$ by means of
$$T\to T/Z_G\overset{\text{simple roots}}\simeq \underset{i}\Pi\, \BG_m.$$

Consider the open embedding
\begin{equation} \label{e:pt mod Z emb}
\on{pt}/Z_G \simeq \left(\underset{i}\Pi\, (\BG_a-0)\right)/T\hookrightarrow (\underset{i}\Pi\, \BG_a)/T.
\end{equation}

Let 
$$(\underset{i}\boxtimes\, \on{exp})/T\in \Shv^{\on{Betti}}\left((\underset{i}\Pi\, \BG_a)/T\right)$$
be the *-direct image along \eqref{e:pt mod Z emb} of
$$R_{Z_G}[-r]\in \Shv^{\on{Betti}}(\on{pt}/Z_G),$$ 
where:

\begin{itemize}

\item $R_{Z_G}$ is the !-direct image of $\sfe\in \Vect=\Shv^{\on{Betti}}(\on{pt})$
along $\on{pt}\to \on{pt}/Z_G$;

\item $r$ is the semi-simple rank of $G$.

\end{itemize} 

\begin{rem}  \label{r:exp r/T}

It follows from Remark \ref{r:Kir} and the projection formula that when $\sfe=\BC$, under the Riemann-Hilbert equivalence, the object  
$(\underset{i}\boxtimes\, \on{exp})/T$ is the !-direct image along the projection 
$$\underset{i}\Pi\, \BG_a\to (\underset{i}\Pi\, \BG_a)/T$$
of 
$$\on{add}^*(\on{exp})\simeq \underset{i}\boxtimes\, \on{exp}.$$

\end{rem} 

\sssec{} \label{sss:exp chi/T}

Note that the map 
$$\fp:\Bun_{N,\rho(\omega_X)}\to \Bun_G$$
factors as
$$\Bun_{N,\rho(\omega_X)}\to \Bun_{N,\rho(\omega_X)}/T \overset{\fp/T}\to \Bun_G.$$

By construction, the map $\Bun_{N,\rho(\omega_X)}\overset{\chi}\to \BG_a$ factors as
$$\Bun_{N,\rho(\omega_X)}\overset{\ul\chi}\to \underset{i}\Pi\, \BG_a\overset{\on{sum}}\to \BG_a.$$

\medskip

Consider the resulting map
$$\Bun_{N,\rho(\omega_X)}/T\overset{\ul\chi/T}\longrightarrow
(\underset{i}\Pi\, \BG_a)/T.$$

\medskip

Let 
$$\on{exp}_\chi/T\in \Shv^{\on{Betti}}(\Bun_{N,\rho(\omega_X)}/T)$$
be the pullback of the object $(\underset{i}\boxtimes\, \on{exp})/T$ along the map $\ul\chi/T$. 

\begin{rem} \label{r:exp chi/T}
It follows from Remark \ref{r:exp r/T} that when $\sfe=\BC$, under the Riemann-Hilbert equivalence, the object $\on{exp}_\chi/T$
corresponds to the !-direct image along the projection
$$\Bun_{N,\rho(\omega_X)}\to \Bun_{N,\rho(\omega_X)}/T$$
of the object
$$\on{exp}_\chi:=\chi^*(\on{exp})\in \Dmod(\Bun_{N,\rho(\omega_X)}).$$
\end{rem} 

\sssec{} 

Let 
$$\on{Poinc}^{\on{Vac,glob}}_!:=(\fp/T)_!(\on{exp}_\chi/T) \in \Shv^{\on{Betti}}_{\frac{1}{2}}(\Bun_G).$$

\medskip

Finally, set
$$\on{Poinc}^{\on{Vac,glob}}_{!,\Nilp}:=\bi^L(\on{Poinc}^{\on{Vac,glob}}_!).$$

\sssec{}

We claim:

\begin{prop} \label{p:Poinc Nilp comp}
The object $\on{Poinc}^{\on{Vac,glob}}_{!,\Nilp}\in \Shv^{\on{Betti}}_{\frac{1}{2},\Nilp}(\Bun_G)$
is compact.
\end{prop}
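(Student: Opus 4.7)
My plan is to reduce the compactness question to a Whittaker cleanness / finiteness input via adjunction.

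First, the singular support condition defining the inclusion $\bi$ is closed under filtered colimits in the ambient category, so $\bi$ is continuous. Combined with the adjunction $\bi^L \dashv \bi$, compactness of $\bi^L(\on{Poinc}^{\on{Vac,glob}}_!)$ in $\Shv^{\on{Betti}}_{\frac{1}{2},\Nilp}(\Bun_G)$ is equivalent to continuity of the functor
\[
Y \longmapsto \on{Hom}_{\Shv^{\on{Betti}}_{\frac{1}{2}}(\Bun_G)}\!\bigl(\on{Poinc}^{\on{Vac,glob}}_!,\, \bi(Y)\bigr)
\]
on $Y \in \Shv^{\on{Betti}}_{\frac{1}{2},\Nilp}(\Bun_G)$.

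Next, using the definition $\on{Poinc}^{\on{Vac,glob}}_! = (\fp/T)_!(\on{exp}_\chi/T)$ together with the $(\fp/T)_! \dashv (\fp/T)^!$ adjunction, the Hom above rewrites as
\[
\on{Hom}\bigl(\on{exp}_\chi/T,\, (\fp/T)^!(\bi(Y))\bigr),
\]
a pairing on the non-quasi-compact stack $\Bun_{N,\rho(\omega_X)}/T$. The core input I would invoke is a Betti version of Whittaker cleanness: for sheaves $Y$ with nilpotent singular support, this pairing is supported on (or at least controlled by) a quasi-compact substack of $\Bun_{N,\rho(\omega_X)}/T$. The essential ingredients are (i) the $*$-extension origin of $\on{exp}_\chi/T$ from the open locus $\on{pt}/Z_G \hookrightarrow (\underset{i}\Pi\, \BG_a)/T$ of \secref{sss:exp chi/T}, encoding non-degeneracy of the Whittaker character, and (ii) the nilpotent singular support of $Y$, which kills the contributions of the degenerate strata with respect to the Drinfeld-style defect stratification of $\Bun_{N,\rho(\omega_X)}/T$. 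With cleanness in hand, the Hom becomes a pairing of constructible sheaves on a quasi-compact constructible substack, and continuity in $Y$ is automatic.

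The main obstacle will be establishing cleanness directly in the Betti setting, where the additive character of the de Rham exponential D-module is replaced by the $T$-equivariant $*$-extension of \secref{sss:exp chi/T}; one must verify this substitute suffices to kill the non-generic contributions. A natural route is to invoke the de Rham cleanness (which is classical and underlies the compactness of the usual Poincar\'e sheaf) together with \remref{r:Kir} and \remref{r:exp chi/T} and Riemann--Hilbert to deduce the case $\sfe = \BC$, then descend to arbitrary characteristic-$0$ coefficient fields $\sfe$ using that the cleanness locus is of constructible nature and independent of coefficients.
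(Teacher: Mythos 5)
Your approach is genuinely different from the paper's and, as written, has a significant gap: it rests on a Betti Whittaker cleanness statement that you acknowledge you would still need to establish. Cleanness for the Whittaker pairing on $\Bun_{N,\rho(\omega_X)}/T$ is a substantially harder result than the compactness statement you are trying to prove (indeed, results of that flavor are what $\BL_{G,\on{coarse}}$ and its properties are built from, later and with much more effort), so this route, even if it could be completed, would be badly circular in spirit. Moreover, even granting cleanness, the final step (``continuity in $Y$ is automatic'') is not: $Y$ ranges over the \emph{big} category $\Shv^{\on{Betti}}_{\frac{1}{2},\Nilp}(\Bun_G)$, whose objects are not constructible, so reducing the support of the pairing to a quasi-compact substack does not by itself give commutation with filtered colimits; one would still need a compactness input on that substack. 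Your proposed patch via Riemann--Hilbert is also shakier than it looks, since RH identifies $\Dmod^{\on{RS}}$ with $\Shv^{\on{Betti,constr}}$, not the big categories, and the de Rham cleanness you would want to import involves the (irregular) exponential D-module, which lies outside the RS world.

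The paper's argument sidesteps all of this. The key fact you are missing is already recorded in \secref{sss:betti comp gen}: by \cite[Prop.\ G.3.5]{AGKRRV}, the objects $\bi^L(\delta_y)$ for $y\in\Bun_G$ are compact generators of $\Shv^{\on{Betti}}_{\frac{1}{2},\Nilp}(\Bun_G)$. Given that, it suffices to show that $\on{Poinc}^{\on{Vac,glob}}_!$ is a \emph{finite extension of $\delta$-functions} in $\Shv^{\on{Betti}}_{\frac{1}{2}}(\Bun_G)$, since $\bi^L$ is exact and sends each $\delta_y$ to a compact object. This in turn is immediate from the construction: $(\underset{i}\boxtimes\,\on{exp})/T$ is by definition a $*$-extension over a stack stratified by finitely many $B$-gerbes, hence is a finite extension of $\delta$-functions; the map $\ul\chi/T$ is (a $T$-quotient of) a unipotent gerbe, so pulling back preserves this property; and $(\fp/T)_!$ sends $\delta$-functions to $\delta$-functions. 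No cleanness, no adjunction gymnastics, no Riemann--Hilbert is needed. Your first reduction (compactness of $\bi^L(\on{Poinc}^{\on{Vac,glob}}_!)$ is continuity of the corepresented functor) is correct but unnecessary here, and the route you chose after it is where the proof goes off track.
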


\begin{proof}

Note that the object $(\underset{i}\boxtimes\, \on{exp})/T$ is a finite extension of 
$\delta$-functions. 

\medskip

The map 
$$\ul\chi:\Bun_{N,\rho(\omega_X)}\to \underset{i}\Pi\, \BG_a)$$
is a unipotent gerbe. Hence, the map
$$\ul\chi/T:\Bun_{N,\rho(\omega_X)}/T\to (\underset{i}\Pi\, \BG_a)/T$$
has a similar property. 

\medskip

It follows that the object $\on{exp}_\chi/T\in  \Shv^{\on{Betti}}(\Bun_{N,\rho(\omega_X)}/T)$
is a finite extension of $\delta$-functions. Hence, so is $\on{Poinc}^{\on{Vac,glob}}_!$.

\medskip 

The assertion of the proposition follows now from \secref{sss:betti comp gen}.

\end{proof} 

\ssec{Construction of the functor}

Having the object $\on{Poinc}^{\on{Vac,glob}}_{!,\Nilp}$ at our disposal, the construction of the Langlands functor
in the Betti setting mimics its de Rham counterpart. 

\sssec{}

As in the de Rham context, we define the functor 
$$\BL^{\on{Betti},L}_{G,\on{temp}}:\QCoh(\LS^{\on{Betti}}_\cG)\to \Shv^{\on{Betti}}_{\frac{1}{2},\Nilp}(\Bun_G)$$
to be given by acting on the object
$$\on{Poinc}^{\on{Vac,glob}}_{!,\Nilp}\in \Shv^{\on{Betti}}_{\frac{1}{2},\Nilp}(\Bun_G).$$

\medskip

Since the object $\on{Poinc}^{\on{Vac,glob}}_{!,\Nilp}$ is compact and $\QCoh(\LS^{\on{Betti}}_\cG)$ is compactly generated and
rigid, the functor $\BL^{\on{Betti},L}_{G,\on{temp}}$ preserves compactness.

\sssec{}

Let 
$$\BL^{\on{Betti}}_{G,\on{coarse}}:\Shv^{\on{Betti}}_{\frac{1}{2},\Nilp}(\Bun_G)\to \QCoh(\LS^{\on{Betti}}_\cG)$$
denote the right adjoint of $\BL^{\on{Betti},L}_{G,\on{temp}}$. 

\medskip 

Since $\QCoh(\LS^{\on{Betti}}_\cG)$ is compactly generated and 
$\BL^{\on{Betti},L}_{G,\on{temp}}$ preserves compactness, the functor $\BL^{\on{Betti}}_{G,\on{coarse}}$ is continuous.
By rigidity, it is automatically $\QCoh(\LS^{\on{Betti}}_\cG)$-linear.

\sssec{}

We have the following assertion, which is parallel to \thmref{t:compact to bdd below dr}:

\begin{thm} \label{t:compact to bdd below Betti}
The functor
$$\BL^{\on{Betti}}_{G,\on{coarse}}:\Shv^{\on{Betti}}_{\frac{1}{2},\Nilp}(\Bun_G)\to \QCoh(\LS^{\on{Betti}}_\cG)$$
sends compact objects in $\Shv^{\on{Betti}}_{\frac{1}{2},\Nilp}(\Bun_G)$ to bounded below 
objects in $\QCoh(\LS^{\on{Betti}}_\cG)$.
\end{thm}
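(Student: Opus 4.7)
My approach would follow the general shape of the argument for \thmref{t:compact to bdd below dr}, but since we lack an analog of \propref{p:compact bdd below} --- we do not \emph{a priori} know that compact objects of $\Shv^{\on{Betti}}_{\frac{1}{2},\Nilp}(\Bun_G)$ are bounded below --- I would work directly with the explicit compact generators. By \secref{sss:betti comp gen}, it suffices to show that $\BL^{\on{Betti}}_{G,\on{coarse}}(\bi^L(\delta_y))$ is eventually coconnective for each rational point $y$ of $\Bun_G$.

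The first step is to apply \propref{p:when >0} to reduce the question to the boundedness of the !-fibers at all geometric points $\sigma$ of $\LS^{\on{Betti}}_\cG$. As in \secref{sss:red to restr}, after base change we may assume $\sigma$ is rational, so that $i_\sigma$ factors through the restricted locus $\LS^{\on{Betti,restr}}_\cG$. Combined with the Betti analog of \eqref{e:Dmod nilp as ten}, this produces the intertwining
$$
i_\sigma^! \circ \BL^{\on{Betti}}_{G,\on{coarse}} \simeq i_\sigma^!|_{\QCoh(\LS^{\on{Betti}}_\cG)_{\on{restr}}} \circ \BL^{\on{Betti,restr}}_{G,\on{coarse}} \circ (\iota^{\on{Betti}})^R.
$$
Invoking the Betti counterpart of \thmref{t:L coarse left exact Nilp} (established by Riemann--Hilbert transport from the de Rham case when $\sfe = \BC$ and by a base-change argument for general coefficients) then reduces the problem to showing that $(\iota^{\on{Betti}})^R \circ \bi^L(\delta_y)$ is eventually coconnective in $\Shv^{\on{Betti,constr}}_{\frac{1}{2},\Nilp}(\Bun_G)$.

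The crucial technical step --- and, in my estimation, the principal obstacle --- is the functor identification
$$
(\iota^{\on{Betti}})^R \circ \bi^L \circ \oblv^{\on{constr}} \simeq (\bi^{\on{constr}})^R,
$$
where $(\bi^{\on{constr}})^R$ is the right adjoint to the inclusion of constructible sheaves with nilpotent singular support into all constructible sheaves. Geometrically, this asserts that the \emph{large} cut-off $\bi^L$ to nilpotent singular support, followed by the \emph{small} constructible approximation $(\iota^{\on{Betti}})^R$, already computes the constructible nilpotent cut-off on constructible inputs. Verifying this requires a careful unwinding of all three adjunctions together with the microlocal construction of $\bi^L$ from \cite[Sect.~18.2.6]{AGKRRV}, exploiting that $\delta_y$, though not compact in $\Shv^{\on{Betti}}_{\frac{1}{2}}(\Bun_G)$, is nevertheless constructible.

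Granting this identification we obtain $(\iota^{\on{Betti}})^R \circ \bi^L(\delta_y) \simeq (\bi^{\on{constr}})^R(\delta_y)$. Since $\delta_y = (i_y)_!(\sfe)$ is bounded (indeed concentrated in a single degree, up to the gerbe twist), the proof concludes by appealing to a Betti constructible analog of \cite[Corollary 14.5.5]{AGKRRV}, asserting that $(\bi^{\on{constr}})^R$ has bounded cohomological amplitude on the left --- a statement that should be amenable to the same type of argument used in \cite{AGKRRV} for the de Rham constructible category.
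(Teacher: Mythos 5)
Your proposal follows essentially the same architecture as the paper's argument: working with the explicit compact generators $\bi^L(\delta_y)$, reducing via !-fibers and \propref{p:when >0} to the restricted category, transporting \thmref{t:L coarse left exact Nilp} by Riemann--Hilbert, and isolating the functor identification $(\iota^{\on{Betti}})^R\circ\bi^L\circ\oblv^{\on{constr}}\simeq(\bi^{\on{constr}})^R$ as the crux. That identification is indeed what the paper proves (as \lemref{l:Beil}, together with the observation that $\bi^{\on{constr}}\circ(\bi^{\on{constr}})^R$ is the Beilinson projector), though the mechanism is not a ``microlocal unwinding'' of $\bi^L$ but rather that all of $\bi\circ\bi^L$, $\iota^{\on{Betti}}\circ(\iota^{\on{Betti}})^R$, and $\bi^{\on{constr}}\circ(\bi^{\on{constr}})^R$ are incarnations of the Beilinson projector (Hecke functors), and Hecke functors commute. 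Once you have that lemma, the paper only needs that $(\bi^{\on{constr}})^R$ is \emph{left t-exact}, which is automatic from the t-exactness of $\bi^{\on{constr}}$; you do not need the heavier amplitude bound you reach for via \cite[Cor.~14.5.5]{AGKRRV}.

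There is one genuine gap, however, in your final step. You assert that $\delta_y=(i_y)_!(\sfe)\in\Shv^{\on{Betti,constr}}_{\frac{1}{2}}(\Bun_G)$ is ``bounded (indeed concentrated in a single degree, up to the gerbe twist).'' This is not obvious, and it is where the real content hides: $\Bun_G$ is not quasi-compact, so $\delta_y$ is the $!$-extension $(j_U)_!(\delta_y|_U)$ of a skyscraper from some quasi-compact open $U$, and $!$-pushforward along open embeddings is \emph{not} left t-exact. Eventual coconnectivity of $\delta_y$ in the global ind-constructible category requires precisely the Cousin-type argument of \propref{p:compact bdd below} (which uses the affine diagonal of $\Bun_G$ and the b-function bound of \cite{Ra}), transported to the constructible setting --- the paper explicitly invokes this via Riemann--Hilbert at the end of the proof of \propref{p:delta funs}. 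Without this input your argument for boundedness of $\delta_y$ is incomplete.
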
 

The proof will be given in \secref{s:proof of Betti >0}. 

\sssec{}

Assuming \thmref{t:compact to bdd below Betti}, as in the de Rham context, we obtain that there exists a uniquely 
defined continuous functor
\begin{equation} \label{e:L Betti}
\BL^{\on{Betti}}_G:\Shv^{\on{Betti}}_{\frac{1}{2},\Nilp}(\Bun_G)\to \IndCoh_\Nilp(\LS^{\on{Betti}}_\cG),
\end{equation} 
characterized by the requirements that:

\begin{itemize}

\item The functor $\BL^{\on{Betti}}_G$ sends compact objects in $\Shv^{\on{Betti}}_{\frac{1}{2},\Nilp}(\Bun_G)$ to eventually coconnective
objects in $\IndCoh_\Nilp(\LS^{\on{Betti}}_\cG)$;

\item $\Psi_{\Nilp,\{0\}}\circ \BL_G\simeq \BL_{G,\on{coarse}}$.

\end{itemize} 

Moreover, the functor $\BL^{\on{Betti}}_G$ is automatically $\QCoh(\LS^{\on{Betti}}_\cG)$-linear.

\sssec{}

The geometric Langlands conjecture in the Betti context, originally formulated by D.~Ben-Zvi and D.~Nadler in \cite{BZN}, says:

\begin{conj} \label{c:GLC Betti}
The functor $\BL^{\on{Betti}}_G$ of \eqref{e:L Betti} is an equivalence.
\end{conj}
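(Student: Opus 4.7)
My plan is to reduce Conjecture \ref{c:GLC Betti} to the de Rham version, Conjecture \ref{c:GLC dR}, whose proof is the subject of the subsequent papers in this series. The bridge is the chain of logical equivalences sketched in Sect.~\ref{ss:comparison Intro}:
$$\text{Full dR GLC} \Leftrightarrow \text{Restricted dR GLC} \Leftrightarrow \text{Restricted Betti GLC} \Leftrightarrow \text{Full Betti GLC}.$$
So the work is to make each of these equivalences precise at the level of functors (not merely ``abstract'' equivalences on both sides), and then feed in the proof of $\BL_G$ being an equivalence from later in the series.

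The first step is to introduce the tempered quotient $\Shv^{\on{Betti}}_{\frac{1}{2},\Nilp}(\Bun_G)_{\on{temp}}$ and the tempered Langlands functor $\BL^{\on{Betti}}_{G,\on{temp}}$, and to observe that its left adjoint $\BL^{\on{Betti},L}_{G,\on{temp}}$ (already essentially constructed above via the vacuum Poincar\'e object) is by definition valued in the tempered subcategory. A bootstrapping argument in the spirit of Sect.~\ref{ss:temp to all} then shows that $\BL^{\on{Betti}}_G$ is an equivalence iff $\BL^{\on{Betti}}_{G,\on{temp}}$ is. Next, following \cite[Sect. 21.4]{AGKRRV}, one upgrades the passage to restricted variation: one shows that tempered Betti GLC is equivalent to tempered restricted Betti GLC by checking that the tempered automorphic category automatically has nilpotent singular support, parallel to the de Rham case. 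At this stage, Riemann--Hilbert enters: it identifies $\Dmod^{\on{RS}}_{\frac{1}{2},\Nilp}(\Bun_G)$ with $\Shv^{\on{Betti,constr}}_{\frac{1}{2},\Nilp}(\Bun_G)$ and $\LS^{\on{restr}}_\cG$ with $\LS^{\on{Betti,restr}}_\cG$, and one must check that under these identifications the restricted tempered de Rham and Betti Langlands functors coincide. Assuming this, the equivalence of restricted dR and restricted Betti GLC follows, and chaining everything together with the de Rham GLC proves the Betti GLC.

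The main obstacle I expect is compatibility of the normalization of the Langlands functor under Riemann--Hilbert, i.e., checking that RH carries $\on{Poinc}^{\on{Vac,glob}}_{G,!}$ in the de Rham setting to the Betti object $\on{Poinc}^{\on{Vac,glob}}_{!,\Nilp}$ of \eqref{e:Poinc Betti}. Here the $\BG_m$-averaging construction of the Betti Poincar\'e sheaf (Sect.~\ref{ss:Poinc Betti}) is exactly designed to match, via Remarks \ref{r:Kir}--\ref{r:exp chi/T}, the pushforward of $\chi^*(\on{exp})$ along $\Bun_{N,\rho(\omega_X)} \to \Bun_{N,\rho(\omega_X)}/T$; the delicate point is keeping track of the $T$-equivariant structures and the cohomological shift by $[-r]$ so that the two coarse Langlands functors are intertwined on the nose. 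Once that compatibility is in place, the upgrade from coarse to full $\BL^{\on{Betti}}_G$ matches the de Rham upgrade by the uniqueness characterization in Corollary \ref{c:main dr} and its Betti analogue, and the reduction is complete.
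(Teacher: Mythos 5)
Your proposal correctly reconstructs the reduction strategy the paper actually carries out: in this installment the statement remains a conjecture, and what the paper proves is \thmref{t:dR => Betti bis}, the logical equivalence of the Betti GLC with the de Rham GLC (itself deferred to later papers), via the chain
$$\text{Full} \Leftrightarrow \text{Full tempered} \Leftrightarrow \text{Restricted tempered} \Leftrightarrow \text{Restricted},$$
and then Riemann--Hilbert on the restricted level. Your identification of the three steps (the tempered-to-full bootstrap of \secref{ss:temp to all}, the tempered restricted vs.\ tempered passage via \cite[Sect.\ 21.4]{AGKRRV}, and the RH matching of $\Dmod^{\on{RS}}_{\frac{1}{2},\Nilp}(\Bun_G)$ with $\Shv^{\on{Betti,constr}}_{\frac{1}{2},\Nilp}(\Bun_G)$), and of the delicate normalization check for the vacuum Poincar\'e object under RH (which the paper handles through the $T$-averaged construction and Remarks \ref{r:Kir}--\ref{r:exp chi/T}), all match the paper's argument.

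One imprecise claim worth flagging: you justify the tempered~$\Leftrightarrow$~tempered restricted step by saying ``the tempered automorphic category automatically has nilpotent singular support, parallel to the de Rham case.'' This is not what the argument of \cite[Sect.\ 21.4]{AGKRRV} rests on, and in the de Rham setting it is not even true that tempered objects automatically have nilpotent singular support (what is true, and what the paper uses elsewhere via \cite{Be1}, is that \emph{cuspidal} objects are tempered, which is a different containment). In the Betti setting, the nilpotent singular support condition is already baked into the automorphic category $\Shv^{\on{Betti}}_{\frac{1}{2},\Nilp}(\Bun_G)$ from the start; the full-to-restricted passage is about ind-constructibility, i.e., tensoring with $\QCoh(\LS^{\on{Betti}}_\cG)_{\on{restr}}$ on the spectral side, and the content of \cite[Sect.\ 21.4]{AGKRRV} concerns controlling the non-restricted locus via formal completion arguments, not a singular-support condition on the automorphic side. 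This does not derail your overall reduction, but the stated reason for that particular link in the chain is not the correct one.
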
 

\ssec{Comparsion of de Rham and Betti versions of GLC}

Although the Betti and de Rham versions of the automorphic category (i.e., 
$\Shv^{\on{Betti}}_{\frac{1}{2},\Nilp}(\Bun_G)$ and $\Dmod_{\frac{1}{2}}(\Bun_G)$) look very different, it turns 
out that the respective versions of GLC in the two contexts are logically equivalent. 

\medskip

The passage is realized by showing that
either version is equivalent to its \emph{restricted} variant, and the latter can be compared via Riemann-Hilbert. 

\medskip

The material in this subsection relies heavily on that of \cite{AGKRRV}. 

\sssec{}

We are going to prove:

\begin{thm} \label{t:dR => Betti}
The de Rham version of GLC implies the Betti version.
\end{thm}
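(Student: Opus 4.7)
The plan is to follow the logical chain advertised in \secref{ss:comparison Intro}:
$$\text{Full dR GLC}\;\Rightarrow\;\text{Restricted dR GLC}\;\Rightarrow\;\text{Restricted Betti GLC}\;\Rightarrow\;\text{Full Betti GLC}.$$
The two outer implications belong to the material of \secref{s:restricted vs full}, which proceeds via the intermediate tempered GLC and is purely categorical (no RH is used). The middle implication is where Riemann-Hilbert enters, and it is really the only new content specific to \thmref{t:dR => Betti}.

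For the first implication, assume $\BL_G$ is an equivalence. By construction (and using the $\QCoh(\LS_\cG)$-linearity established in \propref{p:L and Hecke}) the restricted functor $\BL_G^{\on{restr}}$ is obtained from $\BL_G$ by the base change
$$-\underset{\QCoh(\LS_\cG)}{\otimes}\QCoh(\LS_\cG)_{\on{restr}}.$$
Base change of an equivalence along a symmetric monoidal functor is again an equivalence, which gives restricted dR GLC.

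For the middle implication, I would invoke Riemann-Hilbert on both sides. On the automorphic side, \cite[Corollary 16.5.6]{AGKRRV} shows that every object of $\Dmod_{\frac{1}{2},\Nilp}(\Bun_G)$ has regular singularities, so RH produces an equivalence
$$\Dmod_{\frac{1}{2},\Nilp}(\Bun_G)\;\simeq\;\Shv^{\on{Betti,constr}}_{\frac{1}{2},\Nilp}(\Bun_G).$$
On the spectral side, the restricted prestack $\LS^{\on{restr}}_\cG$ is topological in origin---depending only on $\pi_1(X)$, cf.\ \cite[Sect.~4]{AGKRRV}---so it is canonically identified with $\LS^{\on{Betti,restr}}_\cG$, inducing a matching equivalence of the $\IndCoh_\Nilp$ categories. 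It then suffices to show that $\BL_G^{\on{restr}}$ and $\BL_G^{\on{Betti,restr}}$ are intertwined by these identifications. Both are right adjoints of $\QCoh$-linear functors determined by the image of the structure sheaf, i.e.\ by the (restricted) vacuum Poincar\'e objects; by Remark \ref{r:exp chi/T}, the Betti Poincar\'e sheaf $\on{Poinc}^{\on{Vac,glob}}_!$ is precisely the !-pushforward along $\Bun_{N,\rho(\omega_X)}\to\Bun_{N,\rho(\omega_X)}/T$ of the de Rham object $\on{Poinc}^{\on{Vac,glob}}_{G,!}$, so the two Poincar\'e objects correspond under RH. The Hecke/spectral actions come from geometric Satake and are topological in nature, hence transport automatically. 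The third implication, from restricted to full Betti GLC, is the Betti mirror of \secref{s:restricted vs full}, proved along identical lines using the tempered Betti category.

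The main obstacle is the middle step: while RH and the identification of restricted local system stacks are available off the shelf, one must verify that every ingredient entering the construction of $\BL_G^{\on{restr}}$ (the $\det^{\frac{1}{2}}_{\Bun_G}$-twist, the Whittaker-Poincar\'e object, the spectral action from \thmref{t:spectral decomp}) transports correctly. The subtlest point is the Poincar\'e object, since the exponential D-module has no Betti analogue; the $T$-quotiented construction of \secref{ss:Poinc Betti} was designed precisely as an RH-compatible substitute, and essentially all downstream compatibilities reduce to Remark \ref{r:exp chi/T}. A secondary technical point is checking that the singular-support projection $\on{Poinc}^{\on{Vac,glob}}_!\mapsto\bi^L(\on{Poinc}^{\on{Vac,glob}}_!)$ in the Betti construction is matched on the de Rham side, so that equality of the two left adjoints on compact generators propagates to equality of the two coarse Langlands functors as continuous functors.
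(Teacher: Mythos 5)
Your proposed chain
$$\text{Full dR GLC}\;\Rightarrow\;\text{Restricted dR GLC}\;\Leftrightarrow\;\text{Restricted Betti GLC}\;\Rightarrow\;\text{Full Betti GLC}$$
is exactly the decomposition the paper uses (it is spelled out right after the statement of \thmref{t:dR => Betti bis}), and your identification of the role of Riemann--Hilbert and of \remref{r:exp chi/T} for the middle step, and of the tempered-GLC machinery in \secref{s:restricted vs full} for the last step, is correct.

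One step is underexplained in a way that matters. The RH/Poincar\'e-object argument directly shows only that the \emph{coarse} restricted functors $\BL_{G,\on{coarse}}^{\on{restr}}$ and $\BL_{G,\on{coarse}}^{\on{Betti,restr}}$ (valued in $\QCoh$) agree under the Riemann--Hilbert identifications---this is the commutativity of the diagram \eqref{e:RH diag coarse}. But what you need for the equivalence (i')$\Leftrightarrow$(ii') is commutativity of the $\IndCoh_\Nilp$-valued diagram \eqref{e:RH diag}, i.e.\ agreement of $\BL_G^{\on{restr}}$ and $\BL_G^{\on{Betti,restr}}$, and this does not follow formally from the coarse statement because $\Psi_{\Nilp,\{0\}}$ is not an equivalence. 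The paper bridges this gap by restricting to compact objects and invoking \thmref{t:compact to bdd below dr} and \thmref{t:compact to bdd below Betti}: both restricted Langlands functors send compacts into the eventually coconnective part $\IndCoh_\Nilp(\cdot)^{>-\infty}$, and there $\Psi_{\Nilp,\{0\}}$ \emph{is} an equivalence, so the coarse comparison upgrades. Your ``secondary technical point'' about the singular-support projection $\bi^L$ gestures in a different direction; the genuinely essential input you omit is the cohomological boundedness of the images of compacts. Also, a small phrasing slip: $\on{Poinc}^{\on{Vac,glob}}_{G,!}$ lives on $\Bun_G$, so it makes no sense to push it forward along $\Bun_{N,\rho(\omega_X)}\to\Bun_{N,\rho(\omega_X)}/T$; what \remref{r:exp chi/T} gives is that $\on{exp}_\chi/T$ corresponds under RH to the !-pushforward of $\on{exp}_\chi$ along that quotient map, and the Poincar\'e objects on $\Bun_G$ are then compared by showing the functors they co-represent agree.
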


In fact, we will prove a more precise version of \thmref{t:dR => Betti}, see
\thmref{t:dR => Betti bis} below. 

\sssec{} \label{sss:Betti constr Nilp as ten}

Let 
$$\Shv^{\on{Betti,constr}}_{\frac{1}{2},\Nilp}(\Bun_G)$$
be the ind-constructible category, as defined in \cite[Sect. E.5]{AGKRRV}.

\medskip

Recall that according to \cite[Proposition 18.3.2]{AGKRRV}, the forgetful functor
$$\Shv^{\on{Betti,constr}}_{\frac{1}{2},\Nilp}(\Bun_G)\to \Shv^{\on{Betti}}_{\frac{1}{2},\Nilp}(\Bun_G)$$
is fully faithful.

\medskip

Moreover, by \cite[Theorem 18.3.6]{AGKRRV}, the subcategory 
\begin{equation} \label{e:Betti constr Nilp as ten}
\Shv^{\on{Betti,constr}}_{\frac{1}{2},\Nilp}(\Bun_G)\subset \Shv^{\on{Betti}}_{\frac{1}{2},\Nilp}(\Bun_G)
\end{equation}
equals
$$\Shv^{\on{Betti}}_{\frac{1}{2},\Nilp}(\Bun_G)\underset{\QCoh(\LS_\cG)}\otimes \QCoh(\LS^{\on{Betti}}_\cG)_{\on{restr}}.$$

Note also that we have an equivalence
\begin{equation} \label{e:restr as ten prod Betti}
\IndCoh_\Nilp(\LS^{\on{Betti}}_\cG)\underset{\QCoh(\LS_\cG)}\otimes \QCoh(\LS^{\on{Betti}}_\cG)_{\on{restr}}\simeq
\IndCoh_\Nilp(\LS^{\on{Betti,restr}}_\cG).
\end{equation} 

Hence, the functor $\BL^{\on{Betti}}_G$ induces a functor, to be denoted
$$\BL_G^{\on{Betti,restr}}:\Shv^{\on{Betti,constr}}_{\frac{1}{2},\Nilp}(\Bun_G)\to \IndCoh_\Nilp(\LS^{\on{Betti,restr}}_\cG).$$

\sssec{}

Similarly, we have
\begin{equation} \label{e:restr as ten prod dR}
\IndCoh_\Nilp(\LS_\cG)\underset{\QCoh(\LS_\cG)}\otimes \QCoh(\LS_\cG)_{\on{restr}}\simeq
\IndCoh_\Nilp(\LS^{\on{restr}}_\cG).
\end{equation} 

Hence, by \eqref{e:Dmod nilp as ten}, the functor $\BL_G$ induces a functor
$$\BL_G^{\on{restr}}:\Dmod_{\frac{1}{2},\Nilp}(\Bun_G) \to \IndCoh_\Nilp(\LS^{\on{restr}}_\cG).$$

\sssec{}

We will prove: 

\begin{thm} \label{t:dR => Betti bis} 
The following statements are logically equivalent:

\smallskip

\noindent{\em(i)} The functor $\BL_G$ is an equivalence.

\smallskip

\noindent{\em(i')} The functor $\BL_G^{\on{restr}}$ is an equivalence;

\smallskip

\noindent{\em(ii)} The functor $\BL^{\on{Betti}}_G$ is an equivalence.

\smallskip

\noindent{\em(ii')} The functor $\BL_G^{\on{Betti,restr}}$ is an equivalence;

\end{thm}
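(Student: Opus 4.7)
The plan is to establish a cycle of implications connecting the four statements: (i)~$\Rightarrow$~(i'), (ii)~$\Rightarrow$~(ii'), (i')~$\Leftrightarrow$~(ii'), and finally (i')~$\Rightarrow$~(i), (ii')~$\Rightarrow$~(ii). The first three items are either essentially formal (the tensor-product maneuver) or a direct comparison (Riemann--Hilbert); the last pair is the technical heart of the theorem and will be deferred to \secref{s:restricted vs full}.

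For (i)~$\Rightarrow$~(i'), note that by construction $\BL_G^{\on{restr}}$ is obtained from $\BL_G$ by applying $(-)\underset{\QCoh(\LS_\cG)}\otimes \QCoh(\LS_\cG)_{\on{restr}}$, using the descriptions of its source and target given in \eqref{e:Dmod nilp as ten} and \eqref{e:restr as ten prod dR}. Since $\BL_G$ is $\QCoh(\LS_\cG)$-linear by \propref{p:L and Hecke}, the property of being an equivalence is preserved under this base change. The same argument, using \eqref{e:Betti constr Nilp as ten} and \eqref{e:restr as ten prod Betti}, gives (ii)~$\Rightarrow$~(ii').

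For the Riemann--Hilbert bridge (i')~$\Leftrightarrow$~(ii'), I would first invoke the (nontrivial) regular-singularities result \cite[Corollary 16.5.6]{AGKRRV} for objects of $\Dmod_{\frac{1}{2},\Nilp}(\Bun_G)$. The Riemann--Hilbert correspondence then identifies the de Rham restricted automorphic category with its Betti constructible counterpart, while on the spectral side $\LS^{\on{restr}}_\cG$ and $\LS^{\on{Betti,restr}}_\cG$ are canonically identified, hence so are their $\IndCoh_\Nilp$ categories. The main verification is that under these identifications the two restricted Langlands functors match, which reduces to matching their values on the unit of $\QCoh$, i.e., the vacuum Poincar\'e objects. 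Here \remref{r:Kir}, \remref{r:exp r/T} and especially \remref{r:exp chi/T} make the identification transparent: the Betti $\on{Poinc}^{\on{Vac,glob}}_{!,\Nilp}$ is constructed from the de Rham version by $T$-averaging and Riemann--Hilbert, an operation that becomes invisible once one restricts to nilpotent singular support (thanks to the Hecke/spectral linearity of both sides).

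The main obstacle is the two implications (i')~$\Rightarrow$~(i) and (ii')~$\Rightarrow$~(ii), which are proved in \secref{s:restricted vs full} by factoring through the tempered versions of GLC. Concretely, one establishes the chain
\[
\text{(i')} \Rightarrow \text{Tempered restricted GLC} \Leftrightarrow \text{Tempered GLC} \Rightarrow \text{(i)},
\]
and analogously in the Betti case. The first arrow is immediate from the definitions. The middle equivalence adapts the argument of \cite[Sect.~21.4]{AGKRRV}; the key simplification over \emph{loc.\ cit.} is that $\BL_{G,\on{temp}}$ admits a left adjoint $\BL^L_{G,\on{temp}}$ by construction, so the hypothesis on existence of an adjoint comes for free. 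The last arrow, carried out in \secref{ss:temp to all}, deduces full GLC from the tempered version, using that the non-tempered part of $\Dmod_{\frac{1}{2}}(\Bun_G)$ is assembled by Eisenstein series out of tempered categories for proper Levi subgroups; this inductive step (on the semisimple rank of $G$) is the deepest piece of the argument.
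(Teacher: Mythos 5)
Your proposal matches the paper's proof strategy point by point: the tensor-product maneuver for (i)~$\Rightarrow$~(i') and (ii)~$\Rightarrow$~(ii'), the Riemann--Hilbert bridge for (i')~$\Leftrightarrow$~(ii') (with the coarse compatibility checked on co-representing Poincar\'e objects and then upgraded to the genuine Langlands functors via the cohomological boundedness on compacts), and the chain through tempered variants for (i')~$\Rightarrow$~(i), reducing ultimately to \secref{ss:temp to all}. The one small imprecision is calling (i')~$\Rightarrow$~Tempered restricted GLC ``immediate from the definitions'': the paper establishes the Verdier-quotient property formally but then invokes the conservativity of $\BL^{\on{restr}}_{G,\on{temp}}$ --- the main result of \cite{FR} --- (or alternatively the Satake-action argument of \remref{r:L temp}), neither of which is a triviality.
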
 

Note that the implications (i) $\Rightarrow$ (i') and (ii) $\Rightarrow$ (ii') in \thmref{t:dR => Betti bis}
are immediate from the equivalences \eqref{e:restr as ten prod dR} and \eqref{e:restr as ten prod Betti}, respectively. 

\medskip

The equivalence of (i') and (ii') will be proved in \secref{sss:Betti vs dR restr}.

\medskip

The implications (i') $\Rightarrow$ (i) and (ii') $\Rightarrow$ (ii) will be proved in
\secref{s:restricted vs full}. 

\section{Proof of \thmref{t:compact to bdd below Betti}}  \label{s:proof of Betti >0}

The proof of \thmref{t:compact to bdd below Betti} follows ideas, similar to its
de Rham counterpart.

\medskip

The main difference is that in the Betti setting, the embedding of the category with nilpotent 
singular support has a \emph{left} adjoint, while in the de Rham setting it admitted a right adjoint. 

\ssec{Reduction steps}

The idea of the proof is to reduce the assertion of \thmref{t:compact to bdd below Betti} to statements
that concern the restricted version of the functor $\BL_G^{\on{Betti}}$. Those are formulated as
Propositions \ref{p:delta funs} and \ref{p:L coarse left exact Nilp Betti} at the end of this subsection. 

\sssec{}

Recall that the left adjoint $\bi^L$ to the tautological embedding 
$$\bi:\Shv^{\on{Betti}}_{\frac{1}{2},\Nilp}(\Bun_G)\hookrightarrow \Shv^{\on{Betti}}_{\frac{1}{2}}(\Bun_G)$$
is well-defined. 

\medskip

Recall also (see \secref{sss:betti comp gen}) that objects of the form 
$$\bi^L(\delta_y), \quad y\in \Bun_G$$
form a set of compact generators of $\Shv^{\on{Betti}}_{\frac{1}{2},\Nilp}(\Bun_G)$.

\medskip

Thus, in order to prove \thmref{t:compact to bdd below Betti}, it suffices to show that the objects
$$\BL^{\on{Betti}}_{G,\on{coarse}}\circ \bi^L(\delta_y)\in \QCoh(\LS^{\on{Betti}}_\cG)$$
are bounded below. 

\sssec{}

Note that for an extension of fields of coefficients $\sfe\subset \sfe'$, the base change
$$\LS^{\on{Betti}}_\cG\underset{\Spec(\sfe)}\times \Spec(\sfe')$$
identifies with the stack of Betti local systems with $\sfe'$-coefficients.

\medskip

Hence, as in \secref{sss:red to rat}, it suffices to show that the objects
$$i_\sigma^! \left(\BL^{\on{Betti}}_{G,\on{coarse}}\circ \bi^L(\delta_y)\right)\in \Vect_\sfe$$
belong to $\Vect_\sfe^{\geq n}$, where $n$ is an integer that only depends on $y$
(but not on the choice of $\sfe$ or $\sigma$). 

\sssec{}

Let
$$\iota^{\on{Betti,spec}}:\QCoh(\LS^{\on{Betti}}_\cG)_{\on{restr}}\rightleftarrows \QCoh(\LS^{\on{Betti}}_\cG):(\iota^{\on{Betti,spec}})^R$$
be the corresponding pair of adjoint functors. 

\medskip

As in \secref{sss:red to restr}, it suffices to show that 
$$(\iota^{\on{Betti,spec}})^R\circ \BL^{\on{Betti}}_{G,\on{coarse}}\circ \bi^L(\delta_y)\in \Vect_\sfe^{\geq n},$$
where $n$ only depends on $y$. 

\sssec{}

Consider the embedding 
$$\Shv^{\on{Betti,constr}}_{\frac{1}{2},\Nilp}(\Bun_G)\overset{\iota^{\on{Betti}}}\hookrightarrow 
\Shv^{\on{Betti}}_{\frac{1}{2},\Nilp}(\Bun_G),$$
see \secref{sss:Betti constr Nilp as ten}. 

\medskip

By \eqref{e:Betti constr Nilp as ten}, this embedding admits a continuous right adjoint, to be denoted 
$(\iota^{\on{Betti}})^R$. Moreover, the functor $\BL^{\on{Betti}}_{G,\on{coarse}}$ induces a functor, to be denoted
$$\BL_{G,\on{coarse}}^{\on{Betti,restr}}:\Shv^{\on{Betti,constr}}_{\frac{1}{2},\Nilp}(\Bun_G)\to \IndCoh_\Nilp(\LS^{\on{Betti,restr}}_\cG),$$
and we have
$$(\iota^{\on{Betti,spec}})^R\circ \BL^{\on{Betti}}_{G,\on{coarse}} \simeq \BL_{G,\on{coarse}}^{\on{Betti,restr}}\circ (\iota^{\on{Betti}})^R.$$
 
Hence, it suffices to show that
$$\BL_{G,\on{coarse}}^{\on{Betti,restr}}\circ (\iota^{\on{Betti}})^R\circ \bi^L(\delta_y)\in \Vect_\sfe^{\geq n}.$$

\medskip

This is obtained by combining the following two assertions:

\begin{prop} \label{p:delta funs}
Objects of the form 
$$(\iota^{\on{Betti}})^R\circ \bi^L(\delta_y)\in \Shv^{\on{Betti,constr}}_{\frac{1}{2},\Nilp}(\Bun_G)$$
are bounded below. 
\end{prop}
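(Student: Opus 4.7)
The plan is to reduce the computation to the right adjoint of a t-exact inclusion by means of the identity
\[
(\iota^{\on{Betti}})^R \circ \bi^L \circ \oblv^{\on{constr}} \simeq (\bi^{\on{constr}})^R,
\]
regarded as an equivalence of functors $\Shv^{\on{Betti,constr}}_{\frac{1}{2}}(\Bun_G) \to \Shv^{\on{Betti,constr}}_{\frac{1}{2},\Nilp}(\Bun_G)$. This is the identity flagged in the introduction, and I take it as the technical crux of the argument.

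Once the identity is available, the proposition is short. The object $\delta_y = (i_y)_!(\sfe)$ is constructible, so it canonically arises as $\oblv^{\on{constr}}(\delta_y^{\on{constr}})$ for a compact object $\delta_y^{\on{constr}} \in \Shv^{\on{Betti,constr}}_{\frac{1}{2}}(\Bun_G)$. Substituting yields
\[
(\iota^{\on{Betti}})^R \circ \bi^L(\delta_y) \simeq (\bi^{\on{constr}})^R(\delta_y^{\on{constr}}).
\]
Now $\bi^{\on{constr}}$ is t-exact for the natural t-structures on both sides: the subcategory of constructible sheaves with singular support in $\Nilp$ is preserved by truncation functors, so the inclusion is t-exact. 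Its right adjoint $(\bi^{\on{constr}})^R$ is therefore left t-exact. Since $\delta_y^{\on{constr}}$ is concentrated in a bounded range of cohomological degrees (depending only on the gerbe normalization at $y$), we conclude that $(\bi^{\on{constr}})^R(\delta_y^{\on{constr}})$ is bounded below, as required.

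The main obstacle is verifying the displayed identity. I would prove it by a diagram chase of adjunctions: for $F \in \Shv^{\on{Betti,constr}}_{\frac{1}{2},\Nilp}(\Bun_G)$ and $G \in \Shv^{\on{Betti,constr}}_{\frac{1}{2}}(\Bun_G)$, one writes
\[
\Hom\bigl(F,\, (\iota^{\on{Betti}})^R \bi^L \oblv^{\on{constr}}(G)\bigr) \simeq \Hom\bigl(\bi \circ \iota^{\on{Betti}}(F),\, \oblv^{\on{constr}}(G)\bigr)
\]
using the two adjunctions $\iota^{\on{Betti}} \dashv (\iota^{\on{Betti}})^R$ and $\bi^L \dashv \bi$. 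One then invokes the tautological commutativity $\bi \circ \iota^{\on{Betti}} \simeq \oblv^{\on{constr}} \circ \bi^{\on{constr}}$ (both composites are the forgetful functor from constructible Nilp sheaves into the ambient big category) together with fully faithfulness of the constructible forgetful functors (for $\iota^{\on{Betti}}$ this is \cite[Proposition 18.3.2]{AGKRRV}) to identify the resulting mapping space with $\Hom(\bi^{\on{constr}}(F), G)$. The delicate point is that the full ambient $\oblv^{\on{constr}}$ need not a priori be fully faithful in the non-Nilp setting, so the argument must be carried out using that the source $\bi^{\on{constr}}(F)$ has singular support in $\Nilp$ and hence factors through the fully faithful $\iota^{\on{Betti}}$; this is precisely what allows one to reduce fully faithfulness to the case covered by \cite[Proposition 18.3.2]{AGKRRV}.
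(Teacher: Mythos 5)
Your high-level strategy is the same as the paper's: establish the identity
\[
(\iota^{\on{Betti}})^R \circ \bi^L \circ \oblv^{\on{constr}} \simeq (\bi^{\on{constr}})^R
\]
and deduce the result from left t-exactness of $(\bi^{\on{constr}})^R$ plus boundedness of $\delta_y$. The problem is your proposed proof of the displayed identity: the adjunction chase does not type-check. You correctly pass from
$\Hom\bigl(F,\ (\iota^{\on{Betti}})^R \bi^L \oblv^{\on{constr}}(G)\bigr)$
to
$\Hom\bigl(\iota^{\on{Betti}}(F),\ \bi^L \oblv^{\on{constr}}(G)\bigr)$
by the $\iota^{\on{Betti}} \dashv (\iota^{\on{Betti}})^R$ adjunction. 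But the next step, pulling $\bi^L$ through to produce $\Hom\bigl(\bi\,\iota^{\on{Betti}}(F),\ \oblv^{\on{constr}}(G)\bigr)$, would require the isomorphism $\Hom(A,\bi^L(B)) \simeq \Hom(\bi(A),B)$, which is the adjunction $\bi \dashv \bi^L$ --- i.e.\ it would require $\bi^L$ to be the \emph{right} adjoint of $\bi$. In fact $\bi^L$ is the \emph{left} adjoint of $\bi$ (\cite[Sect.~18.2.6]{AGKRRV}), which gives the isomorphism with the arguments in the opposite order, $\Hom(\bi^L(B),A)\simeq\Hom(B,\bi(A))$, and does not help here. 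Because $\oblv^{\on{constr}}(G)$ need not have nilpotent singular support, you also cannot invoke the unit of the $(\bi^L,\bi)$-adjunction to close the gap.

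The identity is genuinely non-formal, and the paper establishes it as Lemma~\ref{l:Beil} via the Beilinson-projector theory: both $\bi\circ\bi^L$ (via \cite[Cor.~18.2.9(a)]{AGKRRV}) and $\iota^{\on{Betti}}\circ(\iota^{\on{Betti}})^R$ are realized by Hecke functors, any two Hecke functors commute, and a short argument shows $\sP^{\on{enh}}_{\LS_\cG^{\on{Betti}}}\circ\sP_{\LS_\cG^{\on{Betti,restr}}}\simeq\sP_{\LS_\cG^{\on{Betti,restr}}}$; one then compares with the fact that the comonad $\bi^{\on{constr}}\circ(\bi^{\on{constr}})^R$ is also given by $\sP_{\LS_\cG^{\on{Betti,restr}}}$ (\cite[Thm.~16.4.10, Prop.~17.2.3]{AGKRRV}). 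This is the technical crux and cannot be replaced by an abstract adjunction argument; you need to supply this content, not just flag the identity. Your closing step (boundedness of $\delta_y$) also glosses over the fact that $\Bun_G$ is a non-quasi-compact stack, so this is not automatic; the paper deduces it from Proposition~\ref{p:compact bdd below} via Riemann-Hilbert.
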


\begin{prop} \label{p:L coarse left exact Nilp Betti}
The functor $\BL_{G,\on{coarse}}^{\on{Betti,restr}}[\dim(\Bun_G)-\dim(\Bun_{N,\rho(\omega_X)})]$ is t-exact.
\end{prop}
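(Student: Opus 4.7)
The plan is to deduce \propref{p:L coarse left exact Nilp Betti} from its de Rham counterpart \thmref{t:L coarse left exact Nilp} by identifying $\BL_{G,\on{coarse}}^{\on{Betti,restr}}$ with $\BL^{\on{restr}}_{G,\on{coarse}}$ under the Riemann--Hilbert correspondence, after first reducing to the case $\sfe = \BC$ by a base-change argument on the field of coefficients.

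For the reduction step, I would observe that the stack $\LS^{\on{Betti,restr}}_\cG$ and the category $\Shv^{\on{Betti,constr}}_{\frac{1}{2},\Nilp}(\Bun_G)$ are both compatible with extension of scalars $\sfe \subset \sfe'$, and that the functor $\BL_{G,\on{coarse}}^{\on{Betti,restr}}$ is characterized by its $\QCoh$-linear structure together with its value on the Poincar\'e object, both of which are preserved under such extensions. Since t-exactness of a continuous functor can be tested after a faithfully flat base change of $\sfe$, I may enlarge $\sfe$ at will and in particular assume $\sfe = \BC$.

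With $\sfe = \BC$, I would invoke Riemann--Hilbert. By \cite[Corollary 16.5.6]{AGKRRV}, objects of $\Dmod_{\frac{1}{2},\Nilp}(\Bun_G)$ automatically have regular singularities, so RH yields an equivalence (t-exact up to an overall shift by $\dim(\Bun_G)$) with $\Shv^{\on{Betti,constr}}_{\frac{1}{2},\Nilp}(\Bun_G)$; analytification identifies $\LS^{\on{restr}}_\cG \simeq \LS^{\on{Betti,restr}}_\cG$ and hence the corresponding $\QCoh$-categories. The two Langlands functors intertwine under these identifications: the factorization $\fp = (\fp/T) \circ q$ with $q: \Bun_{N,\rho(\omega_X)} \to \Bun_{N,\rho(\omega_X)}/T$ gives $\fp_! \simeq (\fp/T)_! \circ q_!$, while Remark \ref{r:exp chi/T} says $q_!(\on{exp}_\chi)$ corresponds under RH to $\on{exp}_\chi/T$; pushing forward by $\fp/T$ then matches the de Rham Poincar\'e object with its Betti counterpart. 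Compatibility of the identifications with the respective spectral actions is the content of \secref{sss:Betti vs dR restr}. Passing to right adjoints, the t-exactness statement for $\BL^{\on{restr}}_{G,\on{coarse}}[\dim(\Bun_G)-\dim(\Bun_{N,\rho(\omega_X)})]$ of \thmref{t:L coarse left exact Nilp} transports to the Betti side.

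The main technical obstacle is the careful accounting of cohomological shifts: Riemann--Hilbert is t-exact only up to the perversity shift $[\dim(\Bun_G)]$, so one has to verify that the combined shift $[\dim(\Bun_G)-\dim(\Bun_{N,\rho(\omega_X)})]$ on the Betti side matches the corresponding shift on the de Rham side modulo this discrepancy. A secondary issue is ensuring that extension of scalars of the coefficient field preserves the relevant t-structures on $\QCoh(\LS^{\on{Betti,restr}}_\cG)$ and on $\Shv^{\on{Betti,constr}}_{\frac{1}{2},\Nilp}(\Bun_G)$ with its perverse t-structure; both should follow from the formalism developed in \cite{AGKRRV} but require checking.
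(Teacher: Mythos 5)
Your proposal follows essentially the same route as the paper's proof: reduce to $\sfe = \BC$, apply Riemann--Hilbert to identify $\Dmod^{\on{RS}}_{\frac12,\Nilp}(\Bun_G)$ with $\Shv^{\on{Betti,constr}}_{\frac12,\Nilp}(\Bun_G)$ (using that by \cite[Cor.\ 16.5.6]{AGKRRV} all nilpotent-singular-support D-modules automatically have regular singularities) and $\LS^{\on{restr}}_\cG$ with $\LS^{\on{Betti,restr}}_\cG$, show the two coarse Langlands functors correspond under these identifications by matching the Poincar\'e objects, and then transport the t-exactness from \thmref{t:L coarse left exact Nilp}. The paper packages the functor-matching step as a commutative diagram \eqref{e:RH diag coarse} and verifies it exactly as you do, by observing that both Poincar\'e objects corepresent the functor $\CF\mapsto \CHom(\on{exp}_\chi/T,(\fp/T)^!(\CF))$ via Remark~\ref{r:exp chi/T}.

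Two small caveats. First, your worry about a residual perversity shift is a non-issue once one fixes the conventions: the paper takes the perverse t-structure on $\Shv^{\on{Betti,constr}}_{\frac12,\Nilp}(\Bun_G)$, so the Riemann--Hilbert equivalence is t-exact on the nose (the paper states this explicitly), and the shift $[\dim(\Bun_G)-\dim(\Bun_{N,\rho(\omega_X)})]$ appears identically on both sides with no further adjustment needed. Second, you cite \secref{sss:Betti vs dR restr} for the compatibility of RH with the spectral actions; this is circular, since that subsection (proving the equivalence of (i') and (ii') in \thmref{t:dR => Betti bis}) relies on \thmref{t:compact to bdd below Betti}, whose proof uses the present proposition. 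The compatibility one actually needs is that Riemann--Hilbert intertwines the Hecke actions of $\Rep(\cG)_\Ran$ (de Rham) and $\Rep(\cG)^{\on{Betti}}_\Ran$ (Betti) under the identification of restricted local system stacks, which is a direct consequence of the construction of geometric Satake in either context and does not appeal to anything downstream.
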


\begin{rem}
When it comes to t-exactness properties, there is a substantial difference between the de Rham and Betti
settings: one can show that the entire functor 
$$\BL_{G,\on{coarse}}^{\on{Betti}}[\dim(\Bun_G)-\dim(\Bun_{N,\rho(\omega_X)})]:
\Shv^{\on{Betti}}_{\frac{1}{2},\Nilp}(\Bun_G)\to \QCoh(\LS^{\on{Betti}}_\cG)$$
is t-exact.
\end{rem} 

\ssec{Applications of Riemann-Hilbert}

In this subsection we will exploit the fact that Riemann-Hilbert allows us to compare directly the categories
$\Dmod_{\frac{1}{2},\Nilp}(\Bun_G)$ and $\Shv^{\on{Betti,constr}}_{\frac{1}{2},\Nilp}(\Bun_G)$, 
and also the prestacks $\LS^{\on{restr}}_\cG$ and $\LS^{\on{Betti,restr}}_\cG$.

\sssec{Proof of \propref{p:L coarse left exact Nilp Betti}}

With no restriction of generality, we can assume that $\sfe=\BC$. 

\medskip

Consider the de Rham context with $k=\BC$. Recall that according to \cite[Corollary 16.5.6]{AGKRRV}, the embedding
$$\Dmod^{\on{RS}}_{\frac{1}{2},\Nilp}(\Bun_G) \subset \Dmod_{\frac{1}{2},\Nilp}(\Bun_G)$$ 
is an equality, where the superscript ``RS" denotes to the sheaf-theoretic context of holonomic D-modules
with regular singularities in the sense of \cite[Sect. 1.1.1]{AGKRRV}\footnote{Recall the 
formation of $\Dmod^{\on{RS}}(-)$ on a stack involves the operation of in-completion on affine schemes, and
the inverse limit over affine schemes mapping to the given stack.}. 

\medskip

Now, Riemann-Hilbert defines a t-exact equivalence
$$\Dmod^{\on{RS}}_{\frac{1}{2},\Nilp}(\Bun_G) \simeq \Shv^{\on{Betti,constr}}_{\frac{1}{2},\Nilp}(\Bun_G)$$
and an isomorphism of stacks
$$\LS^{\on{restr}}_\cG\simeq \LS^{\on{Betti,restr}}_\cG.$$

Indeed, the above stacks in either context are defined by Tannakian formalism, which takes as an input the 
corresponding symmetric monoidal category of local systems $\Lisse(X)$ (see \cite[Sect. 1.2]{AGKRRV}),
and these categories are identified by Riemann-Hilbert.

\medskip

We claim now that  \thmref{t:L coarse left exact Nilp} implies 
\propref{p:L coarse left exact Nilp Betti}. Indeed, follows from the next assertion: 

\begin{lem} 
The diagram 
\begin{equation} \label{e:RH diag coarse}
\CD
\Dmod^{\on{RS}}_{\frac{1}{2},\Nilp}(\Bun_G)  @>{\BL_{G,\on{coarse}}^{\on{restr}}}>>  \QCoh(\LS^{\on{restr}}_\cG) \\
@V{\sim}VV @VV{\sim}V \\
\Shv^{\on{Betti,constr}}_{\frac{1}{2},\Nilp}(\Bun_G) @>{\BL_{G,\on{coarse}}^{\on{Betti,restr}}}>> \QCoh(\LS^{\on{Betti,restr}}_\cG)
\endCD
\end{equation}
commutes. 
\end{lem}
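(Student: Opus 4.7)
The approach is to exploit the universal characterization of the restricted coarse Langlands functor. Both $\BL^{\on{restr}}_{G,\on{coarse}}$ and $\BL^{\on{Betti,restr}}_{G,\on{coarse}}$ are uniquely determined as continuous $\QCoh(\LS^{\on{restr}}_\cG)$-linear (respectively $\QCoh(\LS^{\on{Betti,restr}}_\cG)$-linear) functors whose composition with the global sections functor recovers the restriction of the Whittaker coefficient functor to the nilpotent subcategory. (This is obtained by tensoring the corresponding statement for the non-restricted coarse functors in \secref{ss:coarse} with $\QCoh(\LS_\cG)_{\on{restr}}$ and with $\QCoh(\LS^{\on{Betti}}_\cG)_{\on{restr}}$, respectively.) Hence it suffices to check that Riemann-Hilbert intertwines (a) the two spectral $\QCoh$-module structures and (b) the two Whittaker coefficient functors.

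For (a), the identification of prestacks $\LS^{\on{restr}}_\cG\simeq \LS^{\on{Betti,restr}}_\cG$ arises from Tannakian duality applied to the Riemann-Hilbert equivalence $\Lisse^{\on{RS}}(X)\simeq \Lisse^{\on{Betti,constr}}(X)$, as already invoked in the proof of \propref{p:L coarse left exact Nilp Betti}. The spectral actions on the two automorphic categories factor, by \thmref{t:spectral decomp} and its Betti analog, through the respective geometric Satake localizations. Since Riemann-Hilbert is a symmetric monoidal equivalence on the Ran version of the Satake category and intertwines the de Rham and Betti Hecke convolution actions on sheaves on $\Bun_G$, the induced module structures on the nilpotent subcategories are identified.

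For (b), after adjunction, the Betti Whittaker coefficient functor reads $\Hom(\on{exp}_\chi/T,\,(\fp/T)^!(\bi(-)))$. Factoring $\fp$ as the composition of the $T$-quotient map $\pi\colon \Bun_{N,\rho(\omega_X)}\to \Bun_{N,\rho(\omega_X)}/T$ and $\fp/T$, using the adjunction $\pi_!\dashv \pi^!$, and applying \remref{r:exp chi/T}, which under Riemann-Hilbert identifies $\on{exp}_\chi/T$ with $\pi_!(\chi^*(\on{exp}))$, one rewrites the Betti expression as $\Hom(\chi^*(\on{exp}),\,\fp^!(\bi(-)))$. This matches the de Rham Whittaker coefficient functor, as required. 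Equivalently, one may check directly that $\on{Poinc}^{\on{Vac,glob}}_{!,\Nilp}$ corresponds, under the Riemann-Hilbert equivalence on the nilpotent subcategories, to the image of $\on{Poinc}^{\on{Vac,glob}}_{G,!}$ under the right adjoint of the inclusion $\iota\colon \Dmod_{\frac{1}{2},\Nilp}(\Bun_G)\hookrightarrow \Dmod_{\frac{1}{2}}(\Bun_G)$.

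The main obstacle is step (b): the object $\chi^*(\on{exp})$ is not regular singular, so Riemann-Hilbert does not apply to it directly, and the compatibility has to be extracted from the $T$-averaged reformulation provided by \remref{r:exp chi/T}. Some care is needed to keep track of the cohomological shift by $[-r]$ appearing in the definition of the Betti averaged exponential and its match with the normalization of the de Rham $\on{exp}$. Once step (b) is established, the commutativity of the diagram follows immediately from the universal property of the restricted coarse Langlands functor.
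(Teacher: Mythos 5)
Your proposal follows essentially the same route as the paper's own proof: reduce the commutativity of the square to an identification, under Riemann--Hilbert, of the restricted Whittaker coefficient functors (equivalently, of the restricted vacuum Poincar\'e objects they corepresent), and circumvent the irregularity of $\chi^*(\on{exp})$ by passing to the $T$-averaged form $\on{exp}_\chi/T$ via \remref{r:exp chi/T}, so that both coefficient functors take the shape $\CHom(\on{exp}_\chi/T,(\fp/T)^!(-))$ with a regular-singular kernel, to which Riemann--Hilbert applies directly. Your step (a), making explicit that Riemann--Hilbert intertwines the two spectral $\QCoh$-module structures through the Ran geometric Satake localizations, is left implicit in the paper but is the correct underlying justification.
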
 

\begin{proof}

We only have to show that the Riemann-Hilbert equivalence sents
$$\iota(\on{Poinc}_!^{\on{Vac.glob}})\in \Dmod^{\on{RS}}_{\frac{1}{2},\Nilp}(\Bun_G) 
\text{ and } \on{Poinc}_{!,\Nilp}^{\on{Vac.glob}}\in \Shv^{\on{Betti,constr}}_{\frac{1}{2},\Nilp}(\Bun_G).$$

Let us show that the functors that there objects co-represent get identified. However, in both contexts,
the functor in question is 
$$\CF\mapsto \CHom(\on{exp}_\chi/T,(\fp/T)^!(\CF)),$$
see \secref{sss:exp chi/T} and Remark \ref{r:exp chi/T}
 
\end{proof} 

\qed[\propref{p:L coarse left exact Nilp Betti}]

\begin{rem}
There was no actual need to apply to Riemann-Hilbert in order to prove \propref{p:L coarse left exact Nilp Betti}:
one could repeat the argument in \cite{FR} verbatim in the Betti setting.
\end{rem} 

\sssec{} \label{sss:Betti vs dR restr}

In the rest of this subsection we will assume \thmref{t:compact to bdd below Betti}, so that the functor
$\BL_G^{\on{Betti}}$ is defined, and we will prove the equivalence of statements
(i') and (ii') in \thmref{t:dR => Betti bis}. 

\medskip 

By Lefschetz principle, we can 
assume that in (i') the ground field $k$ is $\BC$ and in (ii') the field $\sfe$ of coefficients is also
$\BC$.

\medskip

Applying Riemann-Hilbert, it suffices to show that the diagram 
\begin{equation} \label{e:RH diag}
\CD
\Dmod^{\on{RS}}_{\frac{1}{2},\Nilp}(\Bun_G)  @>{\BL_G^{\on{restr}}}>>  \IndCoh_\Nilp(\LS^{\on{restr}}_\cG) \\
@V{\sim}VV @VV{\sim}V \\
\Shv^{\on{Betti,constr}}_{\frac{1}{2},\Nilp}(\Bun_G) @>{\BL_G^{\on{Betti,restr}}}>> \IndCoh_\Nilp(\LS^{\on{Betti,restr}}_\cG)
\endCD
\end{equation}
commutes. 

\medskip

In order to prove the commutativity of \eqref{e:RH diag}, it suffices to show that the diagram
\begin{equation} \label{e:RH diag comp}
\CD
\Dmod^{\on{RS}}_{\frac{1}{2},\Nilp}(\Bun_G)^c  @>{\BL_G^{\on{restr}}}>>  \IndCoh_\Nilp(\LS^{\on{restr}}_\cG) \\
@V{\sim}VV @VV{\sim}V \\
\Shv^{\on{Betti,constr}}_{\frac{1}{2},\Nilp}(\Bun_G)^c @>{\BL_G^{\on{Betti,restr}}}>> \IndCoh_\Nilp(\LS^{\on{Betti,restr}}_\cG)
\endCD
\end{equation}
commutes. 

\sssec{}

Recall (see \cite[Theorem 16.1.1]{AGKRRV}) that the category 
$$\Dmod^{\on{RS}}_{\frac{1}{2},\Nilp}(\Bun_G) \simeq \Dmod_{\frac{1}{2},\Nilp}(\Bun_G)$$ 
is compactly generated generated, and since the right adjoint $\iota^R$ to the embedding 
$$\Dmod_{\frac{1}{2},\Nilp}(\Bun_G)\overset{\iota}\hookrightarrow \Dmod_{\frac{1}{2}}(\Bun_G)$$
is continuous, compact objects of $\Dmod^{\on{RS}}_{\frac{1}{2},\Nilp}(\Bun_G)$ are compact as 
objects of $\Dmod_{\frac{1}{2}}(\Bun_G)$.

\medskip

In particular, by
\propref{p:compact bdd below}, we obtain that the compact generators of  
$\Dmod^{\on{RS}}_{\frac{1}{2},\Nilp}(\Bun_G)$ are bounded below.  By \thmref{t:compact to bdd below dr}
we obtain that the top horizontal arrow in \eqref{e:RH diag comp} takes values in 
$$\IndCoh_\Nilp(\LS^{\on{restr}}_\cG)^{>-\infty} \subset \IndCoh_\Nilp(\LS^{\on{restr}}_\cG).$$

\medskip

Recall (see \cite[Sect. 18.3.2]{AGKRRV}) that the embedding $\iota^{\on{Betti}}$ preserves compactness.
By \thmref{t:compact to bdd below Betti} we obtain that the bottom horizontal arrow in \eqref{e:RH diag comp} takes values in 
$$\IndCoh_\Nilp(\LS^{\on{Betti,restr}}_\cG)^{>-\infty}\subset \IndCoh_\Nilp(\LS^{\on{Betti,restr}}_\cG).$$

\medskip

Hence, we can replace \eqref{e:RH diag comp} by 
$$
\CD
\Dmod^{\on{RS}}_{\frac{1}{2},\Nilp}(\Bun_G)^c  @>{\BL_G^{\on{restr}}}>>  \IndCoh_\Nilp(\LS^{\on{restr}}_\cG)^{>-\infty}  \\
@V{\sim}VV @VV{\sim}V \\
\Shv^{\on{Betti,constr}}_{\frac{1}{2},\Nilp}(\Bun_G)^c @>{\BL_G^{\on{Betti,restr}}}>> \IndCoh_\Nilp(\LS^{\on{Betti,restr}}_\cG)^{>-\infty}. 
\endCD
$$

\medskip

However, the commutativity of the latter diagram follows formally from the commutativity of \eqref{e:RH diag coarse}.

\qed[Equivalence of (i') and (ii')]

\ssec{Proof of \propref{p:delta funs}}

The idea of the proof is to express the functor $(\iota^{\on{Betti}})^R\circ \bi^L$ via a particular Hecke functor, 
known as the \emph{Beilinson projector}. 

\medskip

This will allow us to replace the left adjoint $\bi^L$ by a right adjoint
for some other functor, which has an evident left-exactness property, 

\sssec{}

Let $\sP_{\LS^{\on{Betti,restr}}_\cG}$ be the Beilinson projector from \cite[Sect. 15.4.5]{AGKRRV},
viewed as an endofunctor of $\Shv^{\on{Betti}}_{\frac{1}{2}}(\Bun_G)$. 

\medskip

We claim:

\begin{lem} \label{l:Beil}
The functor $(\iota^{\on{Betti}})^R\circ \bi^L$, followed by
$$\Shv^{\on{Betti,constr}}_{\frac{1}{2},\Nilp}(\Bun_G)\overset{\iota^{\on{Betti}}}\hookrightarrow 
\Shv^{\on{Betti}}_{\frac{1}{2},\Nilp}(\Bun_G)\overset{\bi}\hookrightarrow \Shv^{\on{Betti}}_{\frac{1}{2}}(\Bun_G)$$
identifies canonically with $\sP_{\LS^{\on{Betti,restr}}_\cG}$.
\end{lem}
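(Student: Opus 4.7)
The strategy is to unwind the definition of the Beilinson projector $\sP_{\LS^{\on{Betti,restr}}_\cG}$ and recognize the composition $\bi \circ \iota^{\on{Betti}} \circ (\iota^{\on{Betti}})^R \circ \bi^L$ as the resulting endofunctor. By construction in \cite[Sect.~15.4.5]{AGKRRV}, $\sP_{\LS^{\on{Betti,restr}}_\cG}$ is the $\Rep(\cG)^{\on{Betti}}_\Ran$-linear endofunctor of $\Shv^{\on{Betti}}_{\frac{1}{2}}(\Bun_G)$ which, via the spectral action, implements projection onto the ``restricted-variation'' subcategory; by \eqref{e:Betti constr Nilp as ten} and \cite[Theorem~18.3.6]{AGKRRV}, the latter is identified with $\bi \iota^{\on{Betti}}(\Shv^{\on{Betti,constr}}_{\frac{1}{2},\Nilp}(\Bun_G))$.

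The first step is to check that our candidate composition $\bi \circ \iota^{\on{Betti}} \circ (\iota^{\on{Betti}})^R \circ \bi^L$ is a $\Rep(\cG)^{\on{Betti}}_\Ran$-linear idempotent endofunctor with the same essential image. Idempotence follows from the identities $\bi^L \circ \bi \simeq \on{Id}$ and $(\iota^{\on{Betti}})^R \circ \iota^{\on{Betti}} \simeq \on{Id}$, which are consequences of the full faithfulness of $\bi$ and $\iota^{\on{Betti}}$. The $\Rep(\cG)^{\on{Betti}}_\Ran$-linearity of $\bi$ and $\iota^{\on{Betti}}$ follows from the stability of the two subcategories under the Hecke action, and the adjoints $\bi^L$ and $(\iota^{\on{Betti}})^R$ inherit linearity by the uniqueness of adjoints of linear functors.

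The main obstacle is the second step: upgrading from an equality of ``shape'' (essential image, idempotence, linearity) to a genuine equality of endofunctors. The argument is as follows. Since the $\Rep(\cG)^{\on{Betti}}_\Ran$-action on $\Shv^{\on{Betti}}_{\frac{1}{2}}(\Bun_G)$ factors through $\QCoh(\LS^{\on{Betti}}_\cG)$ precisely on the nilpotent-singular-support subcategory, any endofunctor of the big category that is produced by a spectral-action idempotent must factor as $\bi \circ (-) \circ \bi^L$ for some endofunctor of $\Shv^{\on{Betti}}_{\frac{1}{2},\Nilp}(\Bun_G)$. Inside this subcategory, the tensor-product formula \eqref{e:Betti constr Nilp as ten} realizes the spectral-action projection onto the restricted part as the right adjoint $(\iota^{\on{Betti}})^R$ to $\iota^{\on{Betti}}$: the key point is that the inclusion $\QCoh(\LS^{\on{Betti}}_\cG)_{\on{restr}} \hookrightarrow \QCoh(\LS^{\on{Betti}}_\cG)$ admits a right adjoint given by restriction to the restricted locus, which under tensoring transports precisely to $(\iota^{\on{Betti}})^R$. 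Composing these two reductions yields the claimed identification of natural transformations.
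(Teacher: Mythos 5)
Your proof has a genuine gap at the crucial step, and it hides the actual substance of the argument. The claim in your third paragraph—that ``any endofunctor of the big category that is produced by a spectral-action idempotent must factor as $\bi \circ (-) \circ \bi^L$''—is precisely what needs to be proven, not a consequence of the definitions. The Beilinson projector $\sP_{\LS^{\on{Betti,restr}}_\cG}$ of \cite[Sect.~15.4.5]{AGKRRV} is defined as a Hecke endofunctor of the \emph{full} category $\Shv^{\on{Betti}}_{\frac{1}{2}}(\Bun_G)$, where the $\Rep(\cG)^{\on{Betti}}_\Ran$-action does \emph{not} factor through $\QCoh(\LS^{\on{Betti}}_\cG)$. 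That its essential image lands in $\Shv^{\on{Betti}}_{\frac{1}{2},\Nilp}(\Bun_G)$ is a non-trivial fact; the paper's proof deduces it from the chain of inclusions $\Shv^{\on{Betti}}_{\frac{1}{2}}(\Bun_G)^{\on{Hecke\text{-}fin.mon.}} \subset \Shv^{\on{Betti}}_{\frac{1}{2}}(\Bun_G)^{\on{Hecke\text{-}loc.const.}} \subset \Shv^{\on{Betti}}_{\frac{1}{2},\Nilp}(\Bun_G)$, the last inclusion being \cite[Theorem~18.1.6]{AGKRRV}—a substantive theorem that your argument never invokes. Even the weaker factorization you assert (that $\sP$ only depends on the image under $\bi^L$) is not free; the paper obtains it by first identifying $\bi\circ\bi^L$ with the Hecke functor $\sP^{\on{enh}}_{\LS^{\on{Betti}}_\cG}$ via \cite[Corollary~18.2.9(a)]{AGKRRV} and then using commutativity of Hecke functors.

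There is also a logical gap in the second step: two $\Rep(\cG)^{\on{Betti}}_\Ran$-linear idempotent endofunctors with the same essential image need not coincide as natural transformations (for instance, on $\Vect\oplus\Vect$ the projection to the first factor and to the diagonal are both idempotent with equivalent image). What is actually needed is an identification of the two as \emph{specific} (co)localization data compatible with the spectral decomposition, and your proposal offers no mechanism for this. By contrast, the paper's proof proceeds entirely through explicit identifications: $\bi\circ\bi^L \simeq \sP^{\on{enh}}_{\LS^{\on{Betti}}_\cG}$ and $\iota^{\on{Betti}}\circ(\iota^{\on{Betti}})^R \simeq \sP_{\LS^{\on{Betti,restr}}_\cG}|_{\Nilp}$, the commutativity of Hecke functors, and the reduction to showing $\sP^{\on{enh}}_{\LS^{\on{Betti}}_\cG}\circ\sP_{\LS^{\on{Betti,restr}}_\cG}\simeq \sP_{\LS^{\on{Betti,restr}}_\cG}$, which is where the nilpotent-singular-support theorem enters. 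Your argument needs to be replaced by, or augmented with, these explicit identifications and the appeal to \cite[Theorem~18.1.6]{AGKRRV}.
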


Let us assume this lemma and prove \propref{p:delta funs}. 

\sssec{}

Denote by $\bi^{\on{constr}}$ the embedding
$$\Shv^{\on{Betti,constr}}_{\frac{1}{2},\Nilp}(\Bun_G)\hookrightarrow \Shv^{\on{Betti,constr}}_{\frac{1}{2}}(\Bun_G).$$

By \cite[Theorem 16.4.10 and Proposition 17.2.3]{AGKRRV}, the functor $\bi^{\on{constr}}$ admits a continuous right
adjoint. Moreover, the comonad $\bi^{\on{constr}}\circ (\bi^{\on{constr}})^R$ is given by the Beilinson projector 
$\sP_{\LS^{\on{Betti,restr}}_\cG}$, viewed as an endofunctor of $\Shv^{\on{Betti,constr}}_{\frac{1}{2}}(\Bun_G)$. 

\medskip

Note also that we have a commutative diagram
$$
\CD
\Shv^{\on{Betti,constr}}_{\frac{1}{2}}(\Bun_G) @>{\oblv^{\on{constr}}}>> \Shv^{\on{Betti}}_{\frac{1}{2}}(\Bun_G) \\
@V{\sP_{\LS^{\on{Betti,restr}}_\cG}}VV @VV{\sP_{\LS^{\on{Betti,restr}}_\cG}}V \\
\Shv^{\on{Betti,constr}}_{\frac{1}{2}}(\Bun_G) @>{\oblv^{\on{constr}}}>> \Shv^{\on{Betti}}_{\frac{1}{2}}(\Bun_G), 
\endCD
$$
in which the horizontal arrows are the tautological forgetful functor. 

\begin{rem}
Note that the functor 
\begin{equation} \label{e:forget constr}
\oblv^{\on{constr}}:\Shv^{\on{Betti,constr}}_{\frac{1}{2}}(\Bun_G) \to \Shv^{\on{Betti}}_{\frac{1}{2}}(\Bun_G)
\end{equation} 
is not fully faithful. However, the composition
$$\Shv^{\on{Betti,constr}}_{\frac{1}{2},\Nilp}(\Bun_G) \overset{\bi^{\on{constr}}}\hookrightarrow 
\Shv^{\on{Betti,constr}}_{\frac{1}{2}}(\Bun_G) \overset{\oblv^{\on{constr}}}\longrightarrow  \Shv^{\on{Betti}}_{\frac{1}{2}}(\Bun_G)$$
is fully faithful, since it can be rewritten as
$$\Shv^{\on{Betti,constr}}_{\frac{1}{2},\Nilp}(\Bun_G) \overset{\iota^{\on{Betti}}}\hookrightarrow 
\Shv^{\on{Betti}}_{\frac{1}{2},\Nilp}(\Bun_G)\overset{\bi}\hookrightarrow \Shv^{\on{Betti}}_{\frac{1}{2}}(\Bun_G).$$
\end{rem}

\sssec{}

Applying \lemref{l:Beil}, we obtain that we have a canonical
isomorphism 
$$(\iota^{\on{Betti}})^R\circ \bi^L\circ \oblv^{\on{constr}}\simeq (\bi^{\on{constr}})^R$$
as functors
$$\Shv^{\on{Betti,constr}}_{\frac{1}{2}}(\Bun_G)\to \Shv^{\on{Betti,constr}}_{\frac{1}{2},\Nilp}(\Bun_G).$$

\medskip

In particular, since the functor $(\bi^{\on{constr}})^R$ is left t-exact, we obtain that if $\CF\in \Shv^{\on{Betti,constr}}_{\frac{1}{2}}(\Bun_G)$ 
is eventually coconnective, then so is $(\iota^{\on{Betti}})^R\circ \bi^L\circ \oblv^{\on{constr}}(\CF)$.

\sssec{}

Hence, it remains to show that the objects
$$\delta_y\in \Shv^{\on{Betti,constr}}_{\frac{1}{2}}(\Bun_G)$$
are eventually coconnective.

\medskip

This follows by Riemann-Hilbert from \propref{p:compact bdd below} (or can be reproved directly within 
$\Shv^{\on{Betti,constr}}_{\frac{1}{2}}(\Bun_G)$ by the same argument).

\qed[\propref{p:delta funs}]

\ssec{Proof of \lemref{l:Beil}}

The idea of the proof is that all functors in sight are given by the various versions of the Beilinson projector. 

\sssec{}

Recall (see \cite[Corollary 18.2.9(a)]{AGKRRV}) that the functor $\bi\circ \bi^L$ is given by 
the action of a \emph{Hecke functor}, denoted $\sP^{\on{enh}}_{\LS_\cG^{\on{Betti}}}$.

\medskip

Similarly, the functor $\iota^{\on{Betti}}\circ (\iota^{\on{Betti}})^R$ is given by the restriction to 
$$\Shv^{\on{Betti}}_{\frac{1}{2},\Nilp}(\Bun_G)\subset \Shv^{\on{Betti}}_{\frac{1}{2}}(\Bun_G)$$
of the Hecke functor $\sP_{\LS_\cG^{\on{Betti,restr}}}$. 

\medskip

Now, any two Hecke functors commute, so we have
$$\bi\circ \iota^{\on{Betti}}\circ 
(\iota^{\on{Betti}})^R\circ \bi^L\simeq \sP_{\LS_\cG^{\on{Betti,restr}}}\circ \sP_{\LS_\cG^{\on{Betti}}} \simeq 
\sP^{\on{enh}}_{\LS_\cG^{\on{Betti}}}\circ \sP_{\LS_\cG^{\on{Betti,restr}}}.$$

\sssec{}

Hence, in order to prove \lemref{l:Beil}, it remains to show that 
$$\sP^{\on{enh}}_{\LS_\cG^{\on{Betti}}}\circ \sP_{\LS_\cG^{\on{Betti,restr}}}\simeq \sP_{\LS_\cG^{\on{Betti,restr}}}.$$

I.e., we need to show that the functor $\sP^{\on{enh}}_{\LS_\cG^{\on{Betti}}}$ acts as identity on objects lying
in the essential image of the functor $\sP_{\LS_\cG^{\on{Betti,restr}}}$.

\medskip

By \cite[Corollary 18.2.9(a)]{AGKRRV}, it suffices to show that the essential image of the endofunctor 
$\sP_{\LS_\cG^{\on{Betti,restr}}}$ is contained in $\Shv^{\on{Betti}}_{\frac{1}{2},\Nilp}(\Bun_G)$.

\sssec{}

Note that the essential image of the functor $\sP_{\LS_\cG^{\on{Betti,restr}}}$ lies in 
$$\Shv^{\on{Betti}}_{\frac{1}{2}}(\Bun_G)^{\on{Hecke-fin.mon.}}\subset \Shv^{\on{Betti}}_{\frac{1}{2}}(\Bun_G)$$
(see \cite[Sect. 18.3]{AGKRRV}). 

\medskip

In particular, this essential image lies in 
$$\Shv^{\on{Betti}}_{\frac{1}{2}}(\Bun_G)^{\on{Hecke-loc.const.}}\subset 
\Shv^{\on{Betti}}_{\frac{1}{2}}(\Bun_G).$$

Hence, by \cite[Theorem 18.1.6]{AGKRRV}, it belongs to $\Shv^{\on{Betti}}_{\frac{1}{2},\Nilp}(\Bun_G)$, as required. 

\qed[\lemref{l:Beil}]

\section{Restricted vs full GLC} \label{s:restricted vs full}

In this section we will be prove the implications (i') $\Rightarrow$ (i) and (ii') $\Rightarrow$ (ii) 
in \thmref{t:dR => Betti bis}. 

\medskip

While doing so, we will encounter yet another version of GLC
(in both de Rham and Betti settings), namely, the \emph{tempered} GLC. Ultimately, the logic of the proof 
(in either context) will be:
$$\text{Full GLC}\, \Leftrightarrow \text{Full tempered GLC}\, \Leftrightarrow \text{Restricted tempered GLC}\, 
\Leftrightarrow \text{Restricted GLC}.$$

\ssec{The de Rham context}

In this subsection we will introduce the tempered version of the Langlands functor. We will show that the 
full tempered GLC is equivalent to its restricted version. 

\begin{rem}

The implication (i') $\Rightarrow$ (i) was proved in \cite[Sect. 21.4]{AGKRRV}, under the assumption that the functor
$\BL_G$ admits a left adjoint.  

\medskip

We will prove that this is the case in one of the subsequent papers in this series\footnote{In fact, we will ultimately prove
that $\BL_G$ is an equivalence, so that (i) holds unconditionally.}. In this subsection we will give a different proof 
of this implication, which can then be adapted to the de Betti context.  

\end{rem}

%
%
%
%
%

\sssec{} \label{sss:temp}

Let 
$$\Dmod_{\frac{1}{2}}(\Bun_G)_{\on{temp}}\overset{\bu}\hookrightarrow \Dmod_{\frac{1}{2}}(\Bun_G)$$
denote the \emph{tempered} subcategory, defined as in \cite[Sect. 17.8.4]{AG}. 

\medskip

It follows easily from the definition that the above embedding admits a right adjoint, to be denoted $\bu^R$. 
Thus, we obtain an adjunction
$$\bu:\Dmod_{\frac{1}{2}}(\Bun_G)_{\on{temp}}\rightleftarrows \Dmod_{\frac{1}{2}}(\Bun_G):\bu^R,$$
and for most practical purposes, it is convenient to view $\Dmod_{\frac{1}{2}}(\Bun_G)_{\on{temp}}$
is a localization of $\Dmod_{\frac{1}{2}}(\Bun_G)$. 

\begin{rem}
The definition of $\Dmod_{\frac{1}{2}}(\Bun_G)_{\on{temp}}$ in {\it loc. cit.} depended on the choice of a point $x\in X$,
and independence of the point was conjectural. This conjecture was established by Beraldo (unpublished) and later
by Faegerman-Raskin by a different method (see \cite[Sect. 2.6.2]{FR}). 

\medskip

However, for the purposes of this paper, one can work with $\Dmod_{\frac{1}{2}}(\Bun_G)_{\on{temp}}$ without
knowing that it is point-independent. 

\end{rem} 

\sssec{} \label{sss:LL temp}

We claim that the object $\on{Poinc}_!^{\on{Vac,glob}}$ belongs to $\Dmod_{\frac{1}{2}}(\Bun_G)_{\on{temp}}$. This can be shown 
by re-interpreting $\on{Poinc}_!^{\on{Vac,glob}}$ via a local-to-global 
construction\footnote{Namely, $\on{Poinc}_!^{\on{Vac,glob}}$ is the image of the (vacuum) object in the \emph{local Whittaker category}
$\Whit(G)_x$ under the Poincar\'e functor $\on{Poinc}_{!.x}:\Whit(G)_x\to \Dmod_{\frac{1}{2}}(\Bun_G)$, while all of $\Whit(G)_x$
is tempered.}. This will be discussed in detail in the sequel to this paper.   

\medskip

Since the Hecke action preserves the tempered subcategory, we obtain that the essential image of the functor $\BL^L_{G,\on{temp}}$ is 
contained in $\Dmod_{\frac{1}{2}}(\Bun_G)_{\on{temp}}$.

\medskip

By adjunction, we obtain that the functor $\BL_{G,\on{coarse}}$
factors as
$$\Dmod_{\frac{1}{2}}(\Bun_G) \overset{\bu^R}\twoheadrightarrow \Dmod_{\frac{1}{2}}(\Bun_G)_{\on{temp}}
\overset{\BL_{G,\on{temp}}}\to \QCoh(\LS_G)$$
for a uniquely defined functor
$$\Dmod_{\frac{1}{2}}(\Bun_G)_{\on{temp}}
\overset{\BL_{G,\on{temp}}}\to \QCoh(\LS_G).$$

\medskip

Since
$$\Psi_{\Nilp,\{0\}}\circ \BL_G\simeq \BL_{G,\on{coarse}},$$
we have a commutative diagram
\begin{equation} \label{e:temp diag}
\CD
\Dmod_{\frac{1}{2}}(\Bun_G)  @>{\BL_G}>> \IndCoh_\Nilp(\LS_\cG) \\
@V{\bu^R}VV @VV{\Psi_{\Nilp,\{0\}}}V \\
\Dmod_{\frac{1}{2}}(\Bun_G)_{\on{temp}} @>{\BL_{G,\on{temp}}}>> \QCoh(\LS_\cG),
\endCD
\end{equation}
where the functor $\BL_{G,\on{temp}}$ is the right adjoint of the functor $\BL^L_{G,\on{temp}}$
of \eqref{e:LL temp}.

%

\sssec{}

Set
$$\Dmod_{\frac{1}{2},\Nilp}(\Bun_G)_{\on{temp}}:=\Dmod_{\frac{1}{2},\Nilp}(\Bun_G)\cap \Dmod_{\frac{1}{2}}(\Bun_G)_{\on{temp}}\subset
\Dmod_{\frac{1}{2}}(\Bun_G).$$

It follows from \eqref{e:Dmod nilp as ten} that
$$\Dmod_{\frac{1}{2},\Nilp}(\Bun_G)_{\on{temp}}=
\Dmod_{\frac{1}{2}}(\Bun_G)_{\on{temp}}\underset{\QCoh(\LS_\cG)}\otimes \QCoh(\LS_\cG)_{\on{restr}},$$
as subcategories of $\Dmod_{\frac{1}{2}}(\Bun_G)_{\on{temp}}$.

\medskip

It is easy to see that the functor $\bu^R$ sends
$$\Dmod_{\frac{1}{2},\Nilp}(\Bun_G)\to \Dmod_{\frac{1}{2},\Nilp}(\Bun_G)_{\on{temp}}.$$

\medskip

Applying the functor
$$-\underset{\QCoh(\LS_\cG)}\otimes \QCoh(\LS_\cG)_{\on{restr}},$$
from \eqref{e:temp diag}, we obtain a commutative diagram 
\begin{equation} \label{e:temp diag restr}
\CD
\Dmod_{\frac{1}{2},\Nilp}(\Bun_G)  @>{\BL^{\on{restr}}_G}>> \IndCoh_\Nilp(\LS^{\on{restr}}_\cG) \\
@V{\bu^R}VV @VV{\Psi_{\Nilp,\{0\}}}V \\
\Dmod_{\frac{1}{2},\Nilp}(\Bun_G)_{\on{temp}} @>{\BL^{\on{restr}}_{G,\on{temp}}}>> \QCoh(\LS^{\on{restr}}_\cG). 
\endCD
\end{equation}

\sssec{}

Suppose that the functor $\BL^{\on{restr}}_G$ is an equivalence. We claim that this implies that 
$\BL^{\on{restr}}_{G,\on{temp}}$ is also an equivalence. 

\medskip

Indeed, the fact that $\BL^{\on{restr}}_G$ is an equivalence implies that $\BL^{\on{restr}}_{G,\on{temp}}$ 
is a Verdier quotient. However, we also know that the functor $\BL^{\on{restr}}_{G,\on{temp}}$ is conservative: 
this is the main result of \cite{FR}. 

\begin{rem} \label{r:L temp}
One can avoid appealing to \cite{FR} for the proof of ``$\BL^{\on{restr}}_G$ is an equivalence" $\Rightarrow$ 
``$\BL^{\on{restr}}_{G,\on{temp}}$ is an equivalence".

\medskip

Indeed, it is not difficult to show that the functor $\BL_G$ respects the action of the full Satake category (at a given 
point $x\in X$) on the two sides\footnote{This will be elaborated on in the subsequent paper in this series.}. 
Now,
$$\Dmod_{\frac{1}{2}}(\Bun_G)_{\on{temp}} \simeq \Dmod_{\frac{1}{2}}(\Bun_G)\underset{\Sat_G}\otimes \Sat_{G,\on{temp}}$$
and
$$\QCoh(\LS_\cG) \simeq \IndCoh_\Nilp(\LS_\cG) \underset{\Sat_G}\otimes \Sat_{G,\on{temp}},$$
and also
$$\Dmod_{\frac{1}{2},\Nilp}(\Bun_G)_{\on{temp}} \simeq \Dmod_{\frac{1}{2},\Nilp}(\Bun_G)\underset{\Sat_G}\otimes \Sat_{G,\on{temp}}$$
and
$$\QCoh(\LS^{\on{restr}}_\cG) \simeq \IndCoh_\Nilp(\LS^{\on{restr}}_\cG) \underset{\Sat_G}\otimes \Sat_{G,\on{temp}}.$$

By a similar logic, if $\BL_G$ is an equivalence, then it formally follows that $\BL_{G,\on{temp}}$ is also an equivalence. 

\end{rem} 

\begin{rem}
In \secref{ss:temp to all} we will prove an inverse implication (which is far less trivial):
the fact that $\BL_{G,\on{temp}}$ is an equivalence implies that
$\BL_G$ is an equivalence.

\medskip

The same argument will show that if $\BL^{\on{restr}}_{G,\on{temp}}$ is an equivalence then
$\BL^{\on{restr}}_G$ is an equivalence.

\end{rem}

\sssec{}

We now claim that \emph{if} $\BL^{\on{restr}}_{G,\on{temp}}$ is an equivalence (for all extensions of the ground field
$k\subset k'$), \emph{then} $\BL_{G,\on{temp}}$ is an equivalence. 

\medskip

This follows by repeating verbatim the argument of \cite[Sect. 21.4]{AGKRRV}. 

\begin{rem}
Both deductions:
$$\text{Full tempered GLC}\, \Rightarrow \text{Restricted tempered GLC},$$
carried out above, and 
$$\text{Full GLC}\, \Rightarrow \text{Restricted GLC},$$
carried out in \cite[Sect. 21.4]{AGKRRV}, 
follow the same logic, and both need that the corresponding version of the Langlands functor admit an adjoint. 
Now, the latter is immediate from the construction for $\BL_{G,\on{temp}}$, but requires more work for the original
Langlands functor $\BL_G$. 
\end{rem}

\sssec{} \label{sss:red to temp}

Thus, we obtain that in order to prove the implication (i') $\Rightarrow$ (i) in \thmref{t:dR => Betti bis},
it suffices to show \emph{if} $\BL_{G,\on{temp}}$ is an equivalence \emph{then} $\BL_G$ is an equivalence.

\medskip

This will be done on the next section. 

\ssec{Full vs tempered Langlands} \label{ss:temp to all} 

In this subsection we will assume that $\BL_{G,\on{temp}}$ is an equivalence and deduce that
$\BL_G$ is an equivalence.
 
\medskip

The idea of the proof is that the behavior of $\BL_{G,\on{temp}}$ recovers the behavior of $\BL_G$ 
on \emph{compact objects}. 

\sssec{}

Recall that the category $\IndCoh_\Nilp(\LS^{\on{restr}}_\cG)$ is compactly generated (see \cite[Sect. 11.1.6]{AG});
its subcategory of compact objects is $\Coh_\Nilp(\LS^{\on{restr}}_\cG)$, the category of \emph{coherent} sheaves
with nilpotent singular support. 

\medskip 

Note that the functor 
$$\Psi:\IndCoh(\LS_\cG)\to \QCoh(\LS_\cG)$$
is fully faithful, when restricted to the subcategory of compact objects.
Indeed, this restriction is the natural embedding 
$$\Coh(\LS_\cG)\hookrightarrow \QCoh(\LS_\cG)$$.

\medskip

In particular, we obtain that the functor $\Psi_{\Nilp,\{0\}}$ is also fully faithful when restricted
to 
$$\Coh_\Nilp(\LS^{\on{restr}}_\cG)=\IndCoh_\Nilp(\LS^{\on{restr}}_\cG)^c.$$

\sssec{}

The proof of the desired implication is a formal consequence of the combination of the following two assertions:

\begin{prop} \label{p:uR ff on compacts}
The functor 
$$\bu^R|_{\Dmod_{\frac{1}{2}}(\Bun_G)^c}:\Dmod_{\frac{1}{2}}(\Bun_G)^c\to \Dmod_{\frac{1}{2}}(\Bun_G)_{\on{temp}}$$
is fully faithful.
\end{prop}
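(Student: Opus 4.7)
By the $\bu \dashv \bu^R$ adjunction, together with fully faithfulness of $\bu$ (so that $\bu^R \bu \simeq \id$), the canonical map
\[
\Hom_{\Dmod_{\frac{1}{2}}(\Bun_G)}(F_1, F_2) \to \Hom_{\Dmod_{\frac{1}{2}}(\Bun_G)_{\on{temp}}}(\bu^R F_1, \bu^R F_2)
\]
identifies with precomposition with the counit $\epsilon_{F_1} : \bu\bu^R F_1 \to F_1$. Writing $C_{F_1}$ for the cofiber of $\epsilon_{F_1}$, the Proposition is equivalent to the vanishing
\[
\Hom(C_{F_1}, F_2) = 0 \quad \text{for all } F_1, F_2 \in \Dmod_{\frac{1}{2}}(\Bun_G)^c.
\]
Applying $\bu^R$ to $\epsilon_{F_1}$ yields an equivalence, so $C_{F_1}$ lies in $\ker(\bu^R)$, i.e., it is ``anti-tempered''.

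The plan is to establish this vanishing by describing $C_{F_1}$ concretely via local Hecke theory. Both $\bu$ and $\bu^R$ are $\QCoh(\LS_\cG)$-linear, and more refinedly they are compatible with the action of the spherical Hecke category $\Sat_G$ at any chosen point $x \in X$. The anti-tempered subcategory is cut out by an explicit ``anti-tempered'' idempotent $\kappa \in \Sat_G$, complementary to the tempered idempotent that underlies the construction of $\on{Poinc}^{\on{Vac,glob}}_{G,!}$ as a tempered object in \secref{sss:LL temp}. Consequently, $C_{F_1}$ admits a presentation as the action of $\kappa$ on $F_1$. Combined with the fact that compact objects of $\Dmod_{\frac{1}{2}}(\Bun_G)$ are bounded below (\propref{p:compact bdd below}), the vanishing $\Hom(C_{F_1}, F_2) = 0$ should then reduce to a direct cohomological estimate on $\kappa$.

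The main obstacle is the cohomological analysis of $\kappa$: in a compactly generated stable category the left orthogonal of the compact objects is typically nonzero, so the desired vanishing cannot be extracted by purely formal adjunction yoga, and requires genuine structural input. Concretely, the difficulty is to realize $\kappa$ as a colimit whose successive terms have controlled cohomological amplitude, so that when applied to a bounded-below compact $F_1$ the resulting anti-tempered object $C_{F_1}$ admits no finite-degree maps into any bounded-below compact $F_2$. This local-at-$x$ structural input is closely related to the analysis underlying the t-exactness result \thmref{t:L coarse left exact Nilp} proved in \cite{FR}; in fact, one expects the same machinery there to deliver the description of $\kappa$ needed here.
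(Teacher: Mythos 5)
Your reduction in the first paragraph is exactly the paper's: by adjunction, full faithfulness of $\bu^R$ on compacts is equivalent to the vanishing of $\Hom(\CF_1,\CF_2)$ whenever $\CF_1$ is anti-tempered (in $\ker\bu^R$) and $\CF_2$ is compact. So far so good.

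But from that point on, your plan diverges from the paper and, more importantly, is not actually carried out. You propose to exhibit the anti-tempered part $C_{F_1}$ as the result of acting by an ``anti-tempered idempotent'' $\kappa$ in the spherical Hecke category and then deduce the Hom-vanishing from a cohomological estimate on $\kappa$, combined with \propref{p:compact bdd below}. You yourself flag that this cohomological control on $\kappa$ is the real content, and that it is not obvious; and indeed nothing in the paper supplies a bound of the form you would need. The naive version of the argument cannot work: $C_{F_1}$ need not be infinitely connective (nor cohomologically unbounded in a way that formally kills maps into a bounded-below target), and in general the vanishing of maps from one specified object into compacts is a genuine structural statement, not a consequence of amplitude estimates. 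The reference to the \cite{FR} machinery (which is about t-exactness of the restricted coarse functor on the nilpotent-singular-support subcategory) does not yield the required statement about the anti-tempered kernel on all of $\Dmod_{\frac12}(\Bun_G)$.

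The paper takes a different and complete route. After the same reduction, it writes any compact $\CF_2$ as $(j_U)_!(\CF_U)$ for a co-truncative quasi-compact open $U$, applies the \emph{miraculous equivalence} $\Mir_{\Bun_G}\colon\Dmod_{\frac12}(\Bun_G)_{\on{co}}\to\Dmod_{\frac12}(\Bun_G)$, and uses the compatibility $\Mir_{\Bun_G}^{-1}\circ (j_U)_!\simeq (j_U)_{*,\on{co}}\circ\Mir_U^{-1}$ together with $(j_U)^*_{\on{co}}\simeq (j_U)^*\circ\on{Id}^{\on{nv}}$ to reduce the vanishing to the claim that $\on{Id}^{\on{nv}}\circ\Mir_{\Bun_G}^{-1}$ kills anti-tempered objects; this in turn follows because $\on{Id}^{\on{nv}}$ lands in the tempered subcategory, which is \cite[Theorem~B]{Be1}. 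That last input is a real theorem (the geometric Ramanujan statement), not a cohomological bookkeeping argument, and it is the crucial structural ingredient your plan is missing. If you wish to keep your strategy, you would need to substitute for \cite[Theorem~B]{Be1} some equally strong statement about the anti-tempered idempotent; the cohomological bound you hope for is not currently available and is not obviously equivalent to what Beraldo proves.
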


\begin{prop} \label{p:images}
Assume that $\BL_{G,\on{temp}}$ is an equivalence (for the group $G$ and all its Levi subgroups). 
Then the essential image of $\Dmod_{\frac{1}{2}}(\Bun_G)^c$ in $\QCoh(\LS_\cG)$ of $\Dmod_{\frac{1}{2}}(\Bun_G)^c$ under 
$$\BL_{G,\on{temp}}\circ \bu^R\simeq \BL_{G,\on{coarse}}$$ 
equals $\Coh_\Nilp(\LS_\cG)$.
\end{prop}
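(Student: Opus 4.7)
The plan is to establish both inclusions, $\BL_{G,\on{coarse}}(\Dmod_{\frac{1}{2}}(\Bun_G)^c) \subseteq \Coh_\Nilp(\LS_\cG)$ and $\Coh_\Nilp(\LS_\cG) \subseteq \BL_{G,\on{coarse}}(\Dmod_{\frac{1}{2}}(\Bun_G)^c)$, by induction on the semisimple rank of $G$, exploiting Eisenstein series from proper Levi subgroups in the inductive step and the tempered equivalence for $G$ itself at the top level.

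First I would collect the easy structural ingredients. The factorization $\BL_{G,\on{coarse}} \simeq \BL_{G,\on{temp}} \circ \bu^R$, combined with \propref{p:uR ff on compacts} and the standing hypothesis that $\BL_{G,\on{temp}}$ is an equivalence, shows that $\BL_{G,\on{coarse}}|_{\Dmod^c}$ is fully faithful. Via $\Psi_{\Nilp,\{0\}}$, which is fully faithful on eventually coconnective objects, one may equivalently work with $\BL_G$ and identify its image on $\Dmod_{\frac{1}{2}}(\Bun_G)^c$ with the compact subcategory $\Coh_\Nilp(\LS_\cG) \subset \IndCoh_\Nilp(\LS_\cG)$. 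Next, by the $\QCoh(\LS_\cG)$-linearity of $\BL_G$ (\propref{p:L and Hecke}) together with the fact that $\on{Perf}(\LS_\cG)$ preserves compactness on the automorphic side, the essential image is automatically stable under the action of $\on{Perf}(\LS_\cG)$. Since $\BL_G^L(\CO_{\LS_\cG}) = \on{Poinc}^{\on{Vac,glob}}_{G,!}$ is compact and (using the tempered equivalence) $\BL_G(\on{Poinc}^{\on{Vac,glob}}_{G,!}) \simeq \CO_{\LS_\cG}$, this already exhibits all of $\on{Perf}(\LS_\cG)$ as a full subcategory of the image.

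The remaining, more substantial, task is to account for the part of $\Coh_\Nilp(\LS_\cG)$ beyond $\on{Perf}(\LS_\cG)$. For $\subseteq$, the inductive hypothesis (tempered equivalence for every proper Levi $M$) identifies the tempered compacts of $\Dmod_{\frac{1}{2}}(\Bun_M)$ with $\on{Perf}(\LS_\cM)$; I would then express every compact of $\Dmod_{\frac{1}{2}}(\Bun_G)$ in terms of automorphic Eisenstein series applied to tempered compacts on Levis (with $M = G$ contributing the tempered compact objects themselves), and use the intertwining of $\BL_G$ with the automorphic and spectral Eisenstein functors to conclude that the image consists of spectral Eisenstein of objects from $\on{Perf}(\LS_\cM)$, which lies in $\Coh_\Nilp(\LS_\cG)$. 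For $\supseteq$, I would conversely rely on the spectral-side generation statement that $\Coh_\Nilp(\LS_\cG)$ is generated, as a $\on{Perf}(\LS_\cG)$-stable full subcategory closed under finite colimits, by the spectral Eisenstein of $\on{Perf}(\LS_\cM)$ as $M$ ranges over Levi subgroups; each such generator then visibly lies in the image, by applying the same Eisenstein compatibility to a compact tempered preimage on the automorphic Levi.

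The hard part will be the Eisenstein compatibility of $\BL_G$ on both sides, together with the spectral generation of $\Coh_\Nilp(\LS_\cG)$ from Levi data via spectral Eisenstein. Both are structural statements of independent interest, tied to the gluing approach to $\BL_G$ developed in parallel by Beraldo, Chen, and Lin; in the present framework they must be either taken as inputs or proved separately by analyzing the cohomological behavior of Eisenstein and constant-term functors in interaction with the tempered localization. Once they are granted, the inductive step and the base case of the tempered equivalence fit together formally to yield both inclusions.
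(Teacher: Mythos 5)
Your proposal follows essentially the same route as the paper: decompose both sides via Eisenstein series, use the tempered equivalence to handle the non-Eisenstein part, and run an induction on semisimple rank through the Eisenstein compatibility of $\BL_G$. You also correctly identify the two structural inputs that need to be supplied --- the Eisenstein compatibility (\lemref{l:Eis}) and the spectral-side generation of $\Coh_\Nilp(\LS_\cG)$ by $\on{Perf}(\LS_\cG)$ and spectral Eisenstein of the Levis, which is \cite[Theorem 13.3.6]{AG}.

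There is, however, one omission that matters. You assert as something one would ``express'' that every compact of $\Dmod_{\frac{1}{2}}(\Bun_G)$ is assembled from tempered compacts and Eisenstein series of compacts on proper Levis. This is the paper's \lemref{l:temp and Eis gen}, and it is not a purely formal decomposition: its proof hinges on the fact (due to Beraldo, \cite[Theorem A]{Be1}) that the cuspidal subcategory $\Dmod_{\frac{1}{2}}(\Bun_G)_{\on{cusp}}$ is contained in the tempered subcategory. Concretely, one looks at the composite
$$\Dmod_{\frac{1}{2}}(\Bun_G)_{\on{temp}} \overset{\bu}\hookrightarrow \Dmod_{\frac{1}{2}}(\Bun_G) \overset{\be^L}\twoheadrightarrow \Dmod_{\frac{1}{2}}(\Bun_G)_{\on{cusp}},$$
and shows it is a localization precisely because cuspidals being tempered makes the right adjoint of this composite fully faithful. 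Without that input, it is not clear that tempered compacts together with the Eisenstein subcategory generate everything; they might, a priori, miss part of the cuspidal spectrum. This is the one ingredient you did not flag among the ``hard parts,'' and it would need to be supplied to make your $\subseteq$ direction go through. The rest of your argument --- stability of the image under $\on{Perf}(\LS_\cG)$, the use of the vacuum Poincar\'e object to see that $\on{Perf}(\LS_\cG)$ lands in the image, and the induction via Eisenstein compatibility --- matches the paper's proof in substance.
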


\ssec{Proof of \propref{p:uR ff on compacts}}

The proof will be obtained by combining the following two ingredients. One is the \emph{miraculous functor} on
$\Bun_G$. The other is a result of \cite{Be1}, which says that objects $\Dmod_{\frac{1}{2}}(\Bun_G)$ that are *-extended
from quasi-compact open substacks are tempered. 

\sssec{}

For an object $\CF\in \Dmod_{\frac{1}{2}}(\Bun_G)$, let
$$\CF_{\on{temp}}\to \CF\to \CF_{\on{anti-temp}}$$
be the fiber sequence associated with 
$$\Dmod_{\frac{1}{2}}(\Bun_G)_{\on{temp}}\hookrightarrow \Dmod_{\frac{1}{2}}(\Bun_G).$$

Explicitly,
$$\CF\simeq \bu\circ \bu^R(\CF).$$

\medskip

We have to show that for a pair of compact object $\CF_1,\CF_2\in \Dmod_{\frac{1}{2}}(\Bun_G)$, the map
\begin{equation} \label{e:comp temp}
\Hom_{\Dmod_{\frac{1}{2}}(\Bun_G)}(\CF_1,\CF_2)\to \Hom_{\Dmod_{\frac{1}{2}}(\Bun_G)}(\CF_{1,\on{temp}},\CF_{2,\on{temp}})
\end{equation} 
is an isomorphism. As we shall see, just the assumption that $\CF_2$ be compact will suffice. 

\medskip

The map \eqref{e:comp temp} is tautologically an isomorphism if $\CF_1$ is tempered (for any $\CF_2$). Hence, it suffices to show that 
if $\CF_2$ is compact and $\CF_1$ is \emph{anti-tempered}, i.e., if $\CF_{1,\on{temp}}=0$ or, equivalently, if 
$$\CF_1\to \CF_{1,\on{anti-temp}}$$
is an isomorphism, then
$$\Hom_{\Dmod_{\frac{1}{2}}(\Bun_G)}(\CF_1,\CF_2)=0.$$

\sssec{}

We will now use the \emph{miraculous functor} 
$$\Mir_{\Bun_G}:\Dmod_{\frac{1}{2}}(\Bun_G)_{\on{co}}\to \Dmod_{\frac{1}{2}}(\Bun_G),$$
see \cite[Sect. 3.1.1]{Ga1}. 

\medskip

According to \cite[Theorem 3.1.5]{Ga1}), the functor $\Mir_{\Bun_G}$ is an equivalence. Hence, it is enough to show
that for $\CF_2$ compact and $\CF_1$ anti-tempered, 
\begin{equation} \label{e:Hom into mir comp}
\Hom_{\Dmod_{\frac{1}{2}}(\Bun_G)_{\on{co}}}(\Mir^{-1}_{\Bun_G}(\CF_1),\Mir^{-1}_{\Bun_G}(\CF_2))=0.
\end{equation}

\sssec{}

Recall that compact objects in $\Dmod_{\frac{1}{2}}(\Bun_G)$ are of the form 
$$(j_U)_!(\CF_U),$$
where 
$$U\overset{j_U}\hookrightarrow \Bun_G$$
is the embedding of a quasi-compact open, and $\CF_U\in \Dmod(U)^c$. 

\medskip

With no restriction of generality, we can assume that $U$ is \emph{co-truncative} (see \cite[Sect. 3.1]{DG1} for what this means). 
In this case, the functor
$$\Mir_{\Bun_U}:\Dmod_{\frac{1}{2}}(U)\to \Dmod_{\frac{1}{2}}(U)$$
is an equivalence (see \cite[Lemma 4.5.7]{DG1}) and we have
$$\Mir^{-1}_{\Bun_G}\circ (j_U)_!\simeq (j_U)_{*,\on{co}}\circ \Mir^{-1}_U,$$
where 
$$(j_U)_{*,\on{co}}:\Dmod(U)\to \Dmod_{\frac{1}{2}}(\Bun_G)_{\on{co}}$$
is the tautological functor, see \cite[Lemma 4.4.12]{DG1}. 

\medskip

Hence, in order the prove \eqref{e:Hom into mir comp}, it suffices to show that if $\CF\in \Dmod_{\frac{1}{2}}(\Bun_G)$
is anti-tempered, then
$$(j_U)^*_{\on{co}}\circ \Mir^{-1}_{\Bun_G}(\CF)=0,$$
where $(j_U)^*_{\on{co}}$ is the left adjoint of $(j_U)_{*,\on{co}}$. 

\sssec{}

Let 
$$\on{Id}^{\on{nv}}:\Dmod_{\frac{1}{2}}(\Bun_G)_{\on{co}}\to \Dmod_{\frac{1}{2}}(\Bun_G)$$
be the naive functor (see \cite[Sect. 2.1]{Ga1}). Recall that
$$(j_U)^*_{\on{co}}\simeq (j_U)^* \circ \on{Id}^{\on{nv}}$$
(see \cite[Corollary 2.1.5]{Ga1}). 

\medskip

Hence, it suffices to show that the composite functor 
$$\on{Id}^{\on{nv}}\circ \Mir^{-1}_{\Bun_G}$$
annihilates the anti-tempered subcategory.

\medskip

It is easy to see that both functors $\on{Id}^{\on{nv}}$ and $\Mir_{\Bun_G}$ commute with the Hecke
action. Hence, it suffices to show that for any $\CF\in \Dmod_{\frac{1}{2}}(\Bun_G)$, we have
$$(\on{Id}^{\on{nv}}\circ \Mir^{-1}_{\Bun_G}(\CF))_{\on{anti-temp}}=0.$$

\medskip

However, this follows follows from the fact the essential image of the functor $\on{Id}^{\on{nv}}$ 
is contained in $\Dmod_{\frac{1}{2}}(\Bun_G)_{\on{temp}}$, see \cite[Theorem B]{Be1}.

\qed[\propref{p:uR ff on compacts}]

\ssec{Proof of \propref{p:images}}

The idea of the proof is that compact generators of $\Dmod_{\frac{1}{2}}(\Bun_G)$ can be assembled from
tempered compact objects, and \emph{Eisenstein series}. 

\medskip

The behavior of the former is controlled by the
assumption that $\BL_{G,\on{temp}}$ is an equivalence. 
The behavior of the latter is an expression of a basic compatibility between Eisenstein series and the
Langlands functor. 

\sssec{}

Let 
$$\Dmod_{\frac{1}{2}}(\Bun_G)_{\Eis}\subset \Dmod_{\frac{1}{2}}(\Bun_G)$$
be the full subcategory, generated by the Eisenstein functors 
$$\Eis_!:\Dmod_{\frac{1}{2}}(\Bun_M)\to \Dmod_{\frac{1}{2}}(\Bun_G)$$
for \emph{proper} parabolic subgroups of $G$. 

\sssec{}

We claim:

\begin{lem} \label{l:temp and Eis gen}
The objects in
$$\left(\Dmod_{\frac{1}{2}}(\Bun_G)_{\on{temp}}\right)^c \text{ and } \left(\Dmod_{\frac{1}{2}}(\Bun_G)_{\Eis}\right)^c$$
generate $\Dmod_{\frac{1}{2}}(\Bun_G)$.
\end{lem}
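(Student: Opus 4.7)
The plan is to show that the smallest full cocomplete subcategory $\CC \subset \Dmod_{\frac{1}{2}}(\Bun_G)$ containing the two indicated families of compact objects equals the whole category. Since $\Dmod_{\frac{1}{2}}(\Bun_G)$ is compactly generated (\cite[Theorem 0.1.2]{DG1}), it suffices to check that $\CC^\perp = 0$. The strategy is to translate the two orthogonality conditions defining $\CC^\perp$ into the conditions of being anti-tempered and cuspidal, and then to appeal to the input that cuspidal objects are tempered.

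First I would argue that $\CC$ in fact contains all of $\Dmod_{\frac{1}{2}}(\Bun_G)_{\on{temp}}$. The tempered subcategory is itself compactly generated --- for instance by writing it, as in Remark~\ref{r:L temp}, as $\Dmod_{\frac{1}{2}}(\Bun_G) \underset{\Sat_G}\otimes \Sat_{G,\on{temp}}$, a tensor product over the rigid monoidal category $\Sat_G$ --- and $\bu$, being a left adjoint, preserves colimits. Hence applying $\bu$ to a set of compact generators of $\Dmod_{\frac{1}{2}}(\Bun_G)_{\on{temp}}$ and taking the colimit closure recovers all of $\Dmod_{\frac{1}{2}}(\Bun_G)_{\on{temp}}$. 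For $\CF \in \CC^\perp$, the $(\bu, \bu^R)$-adjunction then yields $\bu^R(\CF) = 0$; that is, $\CF$ is anti-tempered.

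For the second orthogonality condition, using that $\Eis_!$ preserves compactness (its right adjoint $\on{CT}_*$ being continuous) and the adjunction $\Eis_! \dashv \on{CT}_*$, the vanishing of $\Hom(\Eis_!(E), \CF)$ for all compact $E \in \Dmod_{\frac{1}{2}}(\Bun_M)$ and all proper parabolics $P$ with Levi quotient $M$ translates to $\on{CT}_*(\CF) = 0$ for every proper $P$. In other words, $\CF$ is cuspidal. Combined with the preceding paragraph, $\CF$ is simultaneously anti-tempered and cuspidal.

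The argument would conclude by invoking the containment of the cuspidal subcategory in the tempered subcategory: any object of $\Dmod_{\frac{1}{2}}(\Bun_G)$ whose constant term to every proper parabolic vanishes is automatically tempered. Granting this, $\CF$ is both tempered and anti-tempered, whence $\CF \simeq \bu \circ \bu^R(\CF) = 0$, as required. The main obstacle is precisely this last input; it is not proved in the present excerpt, and I would invoke it as a standard consequence of the local-to-global Whittaker/temperedness theory (in the same spirit as the discussion in \secref{sss:LL temp}, where temperedness of $\on{Poinc}^{\on{Vac,glob}}_!$ is established by local means, and \cite{Be1}). A minor auxiliary point --- compactness preservation by $\Eis_!$ --- is a standard consequence of continuity of $\on{CT}_*$.
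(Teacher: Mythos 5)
Your proof is correct and takes essentially the same approach as the paper's: both arguments reduce to the key input from \cite{Be1} (Theorem A there) that every cuspidal object of $\Dmod_{\frac{1}{2}}(\Bun_G)$ is tempered, combined with the two orthogonal decompositions (Eisenstein/cuspidal and tempered/anti-tempered). The only difference is cosmetic packaging: the paper shows that the composite $\be^L \circ \bu$ from the tempered subcategory to the cuspidal subcategory is a localization by exhibiting a fully faithful right adjoint, whereas you directly observe that an object in the right orthogonal of the given compacts is simultaneously cuspidal (hence tempered) and anti-tempered, so vanishes.
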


\begin{proof} 

Let 
$$\Dmod_{\frac{1}{2}}(\Bun_G)_{\on{cusp}}:=\left(\Dmod_{\frac{1}{2}}(\Bun_G)_{\Eis}\right)^\perp.$$

Denote by $\be$ the embedding
$$\Dmod_{\frac{1}{2}}(\Bun_G)_{\on{cusp}}\hookrightarrow \Dmod_{\frac{1}{2}}(\Bun_G),$$
and let $\be^L$ denote its right adjoint. 

\medskip

Since the category $\Dmod_{\frac{1}{2}}(\Bun_G)_{\on{temp}}$ is compactly 
generated\footnote{On the one hand, since we have assumed that $\BL_{G,\on{coarse}}$ is an equivalence, 
the compact generation of $\Dmod_{\frac{1}{2}}(\Bun_G)_{\on{temp}}$ follows from that of $\QCoh(\LS_\cG)$.
On the other hand, this compact generation can be proved
unconditionally: in \cite{FR} it is shown that
the objects obtained by acting by $\Rep(\cG)_\Ran$ on $\on{Poinc}^{\on{Vac,glob}}_!$ generate
$\Dmod_{\frac{1}{2}}(\Bun_G)_{\on{temp}}$.}, it suffices to show that the essential image of the composite functor
$$\Dmod_{\frac{1}{2}}(\Bun_G)_{\on{temp}}\overset{\bu}\hookrightarrow 
\Dmod_{\frac{1}{2}}(\Bun_G) \overset{\be^L}\twoheadrightarrow \Dmod_{\frac{1}{2}}(\Bun_G)_{\on{cusp}}$$
generates the target category. We will show that this functor is in fact a localization (i.e., its right adjoint is fully faithful). 

\medskip

Indeed, the right adjoint to the above functor is the composition
$$\Dmod_{\frac{1}{2}}(\Bun_G)_{\on{temp}}\overset{\bu^R}\hookleftarrow  \Dmod_{\frac{1}{2}}(\Bun_G) 
\overset{\be}\twoheadleftarrow \Dmod_{\frac{1}{2}}(\Bun_G)_{\on{cusp}}.$$

Now, according to \cite[Theorem A]{Be1}, the essential image of the functor $\be$ is contained in 
$\Dmod_{\frac{1}{2}}(\Bun_G)_{\on{temp}}$, i.e., $\be$ factors as
$$\bu\circ \be_{\on{temp}}$$
for a uniquely defined (fully faithful) functor 
$$\be_{\on{temp}}: \Dmod_{\frac{1}{2}}(\Bun_G)_{\on{cusp}}\to \Dmod_{\frac{1}{2}}(\Bun_G)_{\on{temp}}.$$

Hence, the above right adjoint identifies with
$$\bu^R\circ \bu\circ \be_{\on{temp}}\simeq \be_{\on{temp}},$$
as required. 

\end{proof} 

\sssec{}

Let 
$$\IndCoh_\Nilp(\LS_\cG)_{\Eis}\subset \IndCoh_\Nilp(\LS_\cG)$$
be the full subcategory generated by the essential images of the \emph{spectral Eisenstein functors}
$$\Eis^{\on{spec}}:\QCoh(\LS_\cM)\to \IndCoh_\Nilp(\LS_\cG)$$
for proper parabolic subgroups. 

\medskip

According to \cite[Theorem 13.3.6]{AG}, the category $\IndCoh_\Nilp(\LS_\cG)$ is compactly generated by
$$\QCoh(\LS_\cG)^c \text{ and } \left(\IndCoh_\Nilp(\LS_\cG)_{\Eis}\right)^c,$$
where we view $\QCoh(\LS_\cG)$ as a subcategory of $\IndCoh_\Nilp(\LS_\cG)$ via the functor
$\Xi_{\{0\},\Nilp}$, the left adjoint of $\Psi_{\Nilp,\{0\}}$. 

\medskip

Hence, given \lemref{l:temp and Eis gen}, in order to prove \propref{p:images}, it suffices to show that:

\begin{enumerate}

\item The essential image of $\left(\Dmod_{\frac{1}{2}}(\Bun_G)_{\on{temp}}\right)^c$ in $\QCoh(\LS_\cG)$ under 
the functor $\BL_{G,\on{coarse}}$ equals $\QCoh(\LS_\cG)^c$;

\item  The essential image of $\left(\Dmod_{\frac{1}{2}}(\Bun_G)_{\Eis}\right)^c$  under 
$\BL_{G,\on{coarse}}$ equals $\left(\IndCoh_\Nilp(\LS_\cG)_{\Eis}\right)^c$.

\end{enumerate} 

\sssec{}

Point (1) is immediate from the fact that $\BL_{G,\on{temp}}$ is an equivalence. 

\medskip

Point (2) follows by induction on the semi-simple rank from the fact that $\BL_{G,\on{temp}}$ is 
fully faithful from the next assertion (which is well-known, and will be discussed in detail in a subsequent paper in the series):

\medskip

\begin{lem} \label{l:Eis}
For a given parabolic subgroup $P$ with Levi quotient $M$, the diagram
$$
\CD 
\Dmod_{\frac{1}{2}}(\Bun_M) @>{\BL_{M,\on{coarse}}}>> \QCoh(\LS_\cM) \\
@V{\Eis_!}VV @VV{\Eis^{\on{spec}}_{\on{coarse}}}V \\
\Dmod_{\frac{1}{2}}(\Bun_G) @>{\BL_{G,\on{coarse}}}>> \QCoh(\LS_\cG)
\endCD
$$
commutes up to cohomological shifts and tensoring by line bundles, where
$$\Eis^{\on{spec}}_{\on{coarse}}:= \Psi_{\Nilp,\{0\}}\circ \Eis^{\on{spec}}.$$
\end{lem}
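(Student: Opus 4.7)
The plan is to exploit the characterization of $\BL_{G,\on{coarse}}$ (see the end of \secref{ss:coarse}) by two properties: its $\QCoh(\LS_\cG)$-linearity and the identification $\Gamma(\LS_\cG,\BL_{G,\on{coarse}}(-))\simeq \on{coeff}^{\on{Vac,glob}}$. Accordingly, we aim to equip both compositions $\BL_{G,\on{coarse}}\circ \Eis_!$ and $\Eis^{\on{spec}}_{\on{coarse}}\circ \BL_{M,\on{coarse}}$ with compatible $\QCoh(\LS_\cG)$-linear structures (with $\QCoh(\LS_\cM)$ viewed as a $\QCoh(\LS_\cG)$-module via pullback along $\LS_\cM\leftarrow \LS_\cP\to \LS_\cG$), and then reduce to checking equality after applying $\Gamma(\LS_\cG,-)$.

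First I would treat the $\QCoh(\LS_\cG)$-linearity: on the spectral side this is immediate from the construction of $\Eis^{\on{spec}}$ (a pull-push through $\LS_\cP$, which is $\QCoh(\LS_\cG)$-linear by base change). On the automorphic side, one uses the compatibility of the geometric (naive) Satake functors with parabolic induction, i.e. the fact that the Hecke action of $\Rep(\cG)$ on $\Dmod_{\frac{1}{2}}(\Bun_G)$ and the Hecke action of $\Rep(\cM)$ on $\Dmod_{\frac{1}{2}}(\Bun_M)$ are intertwined (up to a shift/twist by the modulus character) by $\Eis_!$; this is a classical fact which by \thmref{t:spectral decomp} upgrades to the required $\QCoh(\LS_\cG)$-linearity of $\Eis_!$. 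Granting this, \propref{p:L and Hecke} reduces us to comparing the two functors as plain (non-linear) functors, which by the characterization mentioned above reduces to identifying the compositions with $\Gamma(\LS_\cG,-)$.

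On the spectral side, by the projection formula along $\LS_\cM\to \LS_\cP\to \LS_\cG$, one has
$$\Gamma(\LS_\cG,\Eis^{\on{spec}}_{\on{coarse}}(\CF))\simeq \Gamma(\LS_\cM,\CF\otimes \CL)[n]$$
for some canonical line bundle $\CL$ and shift $n$ coming from the relative dualizing data of $\LS_\cP\to \LS_\cG$. The automorphic side is the Whittaker coefficient of an Eisenstein series, and the remaining task is therefore to establish the formula
$$\on{coeff}^{\on{Vac,glob}}_G\circ \Eis_!\simeq \on{coeff}^{\on{Vac,glob}}_M,$$
again up to a cohomological shift and tensoring by a line. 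This is the classical geometric Casselman--Shalika--type statement, and is proved by a base-change computation: forming the fiber square of $\fp_G:\Bun_{N_G,\rho_G(\omega_X)}\to \Bun_G$ with $p:\Bun_P\to \Bun_G$, one analyzes the $N_G$-orbit stratification on $G/P$ and checks that only the \emph{open} (big-cell) stratum contributes non-trivially after twisting by $\on{exp}_\chi$, precisely because the character $\chi$ is generic with respect to the $N_M$-part of $N_G$ and vanishes on the unipotent radical $N(P)$; on this open stratum the residual integral reproduces $\on{coeff}^{\on{Vac,glob}}_M$ applied to the starting sheaf.

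The main obstacle will be this last step: carrying out the orbit analysis on $\Bun_{N_G,\rho}\underset{\Bun_G}\times \Bun_P$ and verifying the vanishing on non-open strata, together with keeping track of the precise shifts and line bundles (determinant of cohomology factors from $p_!q^*$ versus those from the spectral $\Eis^{\on{spec}}$). Since the lemma only asks for agreement up to shifts and line bundles, however, one needs only to confirm that the open-cell contribution is non-zero and matches the $M$-Whittaker coefficient, not the finer numerics; this is well-documented and should be attributed to (and deferred to) the detailed treatment in the subsequent paper, as the statement itself already indicates.
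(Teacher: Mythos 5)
The paper does not actually prove \lemref{l:Eis}: immediately before the statement it is described as ``well-known, and will be discussed in detail in a subsequent paper in the series,'' and no argument is supplied. So there is no in-paper proof to compare against, and the most relevant evaluation is whether your sketch is a sound reduction and whether you are honest about where the content lies.

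Your reduction is the expected one and is structurally sound. The two properties recorded at the end of \secref{ss:coarse} — $\QCoh(\LS_\cG)$-linearity and $\Gamma(\LS_\cG,\BL_{G,\on{coarse}}(-))\simeq \on{coeff}^{\on{Vac,glob}}$ — combined with the rigidity of $\QCoh(\LS_\cG)$ do determine a continuous $\QCoh(\LS_\cG)$-linear functor to $\QCoh(\LS_\cG)$ from its post-composition with $\Gamma(\LS_\cG,-)$, so comparing $\BL_{G,\on{coarse}}\circ \Eis_!$ and $\Eis^{\on{spec}}_{\on{coarse}}\circ \BL_{M,\on{coarse}}$ does reduce to (a) equipping both with a $\QCoh(\LS_\cG)$-linear structure and (b) identifying their compositions with $\Gamma$. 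For (a), the spectral linearity is indeed base change, and the automorphic side is the compatibility of naive geometric Satake with parabolic induction (intertwining up to modulus twist), which is the statement you invoke. For (b), you correctly land on the first-coefficient identity $\on{coeff}^{\on{Vac,glob}}_G\circ \Eis_!\simeq \on{coeff}^{\on{Vac,glob}}_M$ up to shift and line, and correctly identify this — the Bruhat-cell contraction argument showing only the open cell contributes to the Whittaker coefficient — as the substantive input. Both you and the paper defer exactly this computation; this is where the actual work lives, and it is appropriate that it is the flagged gap. Two small cautions for when the deferred step is carried out: be precise about which $\QCoh(\LS_\cG)$-module structure on $\QCoh(\LS_\cM)$ you are using (the phrase ``pullback along $\LS_\cM\leftarrow \LS_\cP\to \LS_\cG$'' conflates the restriction functor $\Rep(\cG)\to\Rep(\cM)$ with the non-monoidal push-pull $q_*p^*$), and be explicit about the choice of parabolic ($P$ versus $P^-$) relative to the $N$ defining $\chi$, since the open-cell analysis is sensitive to this.
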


\qed[\propref{p:images}]

\ssec{The Betti context}

The goal of this subsection is to prove the implication (ii') $\Rightarrow$ (ii) 
in \thmref{t:dR => Betti bis}. The argument will be parallel to the proof of 
the implication (i') $\Rightarrow$ (i) given above. 

\sssec{}

Define the full subcategory
$$\Shv^{\on{Betti}}_{\frac{1}{2},\Nilp}(\Bun_G)_{\on{temp}}\subset \Shv^{\on{Betti}}_{\frac{1}{2},\Nilp}(\Bun_G)$$
as in \cite[Sect. 12.8.4]{AG}. 

\medskip

Then the material in Sects. \ref{sss:LL temp}-\ref{sss:red to temp} applies as is. I.e., it suffices to show that
if the functor
$$\BL^{\on{Betti}}_{G,\on{temp}}:\Shv^{\on{Betti}}_{\frac{1}{2},\Nilp}(\Bun_G)_{\on{temp}}\to \QCoh(\LS^{\on{Betti}}_\cG)$$
is an equivalence, then so is $\BL^{\on{Betti}}_G$. 

\medskip

The latter in turn reduces to the combination of the following two propositions:

\begin{prop} \label{p:uR ff on compacts Betti}
The functor 
$$\bu^R|_{\Shv^{\on{Betti}}_{\frac{1}{2},\Nilp}(\Bun_G)^c}:\Shv^{\on{Betti}}_{\frac{1}{2},\Nilp}(\Bun_G)^c\to 
\Shv^{\on{Betti}}_{\frac{1}{2},\Nilp}(\Bun_G)_{\on{temp}}$$
is fully faithful.
\end{prop}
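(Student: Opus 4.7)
The plan is to transcribe the argument of \propref{p:uR ff on compacts} to the Betti setting, since the formal framework (a localization $\bu: \Shv^{\on{Betti}}_{\frac{1}{2},\Nilp}(\Bun_G)_{\on{temp}} \rightleftarrows \Shv^{\on{Betti}}_{\frac{1}{2},\Nilp}(\Bun_G) : \bu^R$, the Hecke action respecting it, etc.) is entirely parallel. First I would reduce the statement to showing that for any anti-tempered object $\CF_1$ (i.e., one with $\bu^R(\CF_1) = 0$, or equivalently $\CF_1 \iso \CF_{1,\on{anti-temp}}$) and any $\CF_2 \in \Shv^{\on{Betti}}_{\frac{1}{2},\Nilp}(\Bun_G)^c$, one has
$$\Hom_{\Shv^{\on{Betti}}_{\frac{1}{2},\Nilp}(\Bun_G)}(\CF_1, \CF_2) = 0.$$
This reduction comes, as in the de Rham case, from the fiber sequence $\CF_{1,\on{temp}} \to \CF_1 \to \CF_{1,\on{anti-temp}}$ associated with the localization.

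Next, I would invoke a Betti miraculous equivalence
$$\Mir^{\on{Betti}}_{\Bun_G}: \Shv^{\on{Betti}}_{\frac{1}{2},\Nilp}(\Bun_G)_{\on{co}} \iso \Shv^{\on{Betti}}_{\frac{1}{2},\Nilp}(\Bun_G)$$
parallel to \cite[Theorem 3.1.5]{Ga1}. Under this equivalence, the compact generators of $\Shv^{\on{Betti}}_{\frac{1}{2},\Nilp}(\Bun_G)$ (which by \secref{sss:betti comp gen} are $\bi^L(\delta_y)$, or equivalently objects of the form $\bi^L \circ (j_U)_!(\CF_U)$ for co-truncative quasi-compact open $U \overset{j_U}\hookrightarrow \Bun_G$ and $\CF_U$ compact) correspond to $(j_U)_{*,\on{co}}$-type objects on the co-category side. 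The vanishing $\Hom(\CF_1, \CF_2) = 0$ is then converted to showing that $(j_U)^*_{\on{co}} \circ (\Mir^{\on{Betti}}_{\Bun_G})^{-1}(\CF_1)$ vanishes for $\CF_1$ anti-tempered.

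Finally, the composition $(j_U)^*_{\on{co}} \circ (\Mir^{\on{Betti}}_{\Bun_G})^{-1}$ factors, by the Betti version of \cite[Corollary 2.1.5]{Ga1}, as $(j_U)^* \circ \on{Id}^{\on{nv},\on{Betti}}$, where $\on{Id}^{\on{nv},\on{Betti}}: \Shv^{\on{Betti}}_{\frac{1}{2},\Nilp}(\Bun_G)_{\on{co}} \to \Shv^{\on{Betti}}_{\frac{1}{2},\Nilp}(\Bun_G)$ is the naive functor. The conclusion follows from the fact that $\on{Id}^{\on{nv},\on{Betti}}$ commutes with the Hecke action and has essential image contained in $\Shv^{\on{Betti}}_{\frac{1}{2},\Nilp}(\Bun_G)_{\on{temp}}$ (the Betti analog of \cite[Theorem B]{Be1}): this forces $(\on{Id}^{\on{nv},\on{Betti}} \circ (\Mir^{\on{Betti}}_{\Bun_G})^{-1}(\CF_1))_{\on{anti-temp}} = 0$, and hence $(j_U)^* \circ \on{Id}^{\on{nv},\on{Betti}}(\CF_1) = 0$ since $\CF_1$ is anti-tempered and the Hecke-compatible composition preserves this property.

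The main obstacle will be establishing the Betti analogs of the two auxiliary results from \cite{Ga1} and \cite{Be1}, namely the miraculous equivalence and the theorem that naive-extensions land in the tempered subcategory. The miraculous equivalence should follow from the same local-to-global formal arguments of \cite{Ga1}, once formulated in the Betti context with singular support conditions; Beraldo's theorem, whose original proof is essentially local near the cusps, should likewise transpose directly. As a backup route, one can transport the de Rham statements to the constructible Betti subcategory $\Shv^{\on{Betti},\on{constr}}_{\frac{1}{2},\Nilp}(\Bun_G)$ via the Riemann–Hilbert equivalence (which preserves the tempered subcategory by its Hecke-compatibility) and then use compact generation of the full Nilp category by its constructible subcategory to extend.
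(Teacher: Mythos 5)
Your proposal matches the paper's proof exactly: the paper states that \propref{p:uR ff on compacts Betti} "is proved along the same lines as \propref{p:uR ff on compacts}" and lists precisely the two ingredients you identify as the main obstacles, namely (a) the Betti miraculous equivalence $\Mir_{\Bun_G}:\Shv^{\on{Betti}}_{\frac{1}{2},\Nilp}(\Bun_G)_{\on{co}}\to\Shv^{\on{Betti}}_{\frac{1}{2},\Nilp}(\Bun_G)$ and (b) the fact that the essential image of $\on{Id}^{\on{nv}}$ lies in $\Shv^{\on{Betti}}_{\frac{1}{2},\Nilp}(\Bun_G)_{\on{temp}}$, both of which the paper likewise asserts are "proved in the same way as in the de Rham setting." One small imprecision on your part: in the big Betti theory the objects $\delta_y|_U$ are \emph{not} compact in $\Shv^{\on{Betti}}(U)$, so the phrase "$\bi^L\circ (j_U)_!(\CF_U)$ for $\CF_U$ compact" should be read with care — but this doesn't affect the structure of the argument, and the paper is no more explicit about this step than you are.
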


\begin{prop} \label{p:images Betti}
Assume that $\BL^{\on{Betti}}_{G,\on{temp}}$ is an equivalence (for the group $G$ and all its Levi subgroups). 
Then the essential image of $\Shv^{\on{Betti}}_{\frac{1}{2},\Nilp}(\Bun_G)^c$ in $\QCoh(\LS^{\on{Betti}}_\cG)$
under 
$$\BL^{\on{Betti}}_{G,\on{temp}}\circ \bu^R\simeq \BL^{\on{Betti}}_{G,\on{coarse}}$$ 
equals $\Coh_\Nilp(\LS^{\on{Betti}}_\cG)$.
\end{prop}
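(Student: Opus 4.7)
The plan is to mimic the proof of \propref{p:images} step by step, with each de Rham ingredient replaced by its Betti counterpart. Specifically, we will reduce the claim to the combination of:
\begin{itemize}
\item[(i)] A Betti version of \lemref{l:temp and Eis gen}: compacts of $\Shv^{\on{Betti}}_{\frac{1}{2},\Nilp}(\Bun_G)$ are generated by $(\Shv^{\on{Betti}}_{\frac{1}{2},\Nilp}(\Bun_G)_{\on{temp}})^c$ together with $(\Shv^{\on{Betti}}_{\frac{1}{2},\Nilp}(\Bun_G)_\Eis)^c$, where the latter is the subcategory generated by the essential images of the Eisenstein functors $\Eis^{\on{Betti}}_!$ for \emph{proper} parabolics;
\item[(ii)] A Betti version of \cite[Theorem 13.3.6]{AG} asserting that $\IndCoh_\Nilp(\LS^{\on{Betti}}_\cG)$ is compactly generated by $\QCoh(\LS^{\on{Betti}}_\cG)^c$ together with the spectral Eisenstein pieces $(\IndCoh_\Nilp(\LS^{\on{Betti}}_\cG)_\Eis)^c$;
\item[(iii)] A Betti version of \lemref{l:Eis}, i.e., the compatibility of $\BL^{\on{Betti}}_{G,\on{coarse}}$ with Eisenstein series (up to cohomological shift and line bundle twist).
\end{itemize}

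Given these three items, the proof proceeds by induction on the semisimple rank exactly as in the de Rham case. For the tempered compacts, the hypothesis that $\BL^{\on{Betti}}_{G,\on{temp}}$ is an equivalence immediately says that the image of $(\Shv^{\on{Betti}}_{\frac{1}{2},\Nilp}(\Bun_G)_{\on{temp}})^c$ under $\BL^{\on{Betti}}_{G,\on{temp}}\circ \bu^R \simeq \BL^{\on{Betti}}_{G,\on{coarse}}$ is $\QCoh(\LS^{\on{Betti}}_\cG)^c$; here one uses that $\QCoh(\LS^{\on{Betti}}_\cG)^c \subset \Coh_\Nilp(\LS^{\on{Betti}}_\cG)$ via the appropriate $\Xi_{\{0\},\Nilp}^{\on{Betti}}$. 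For the Eisenstein compacts, the inductive hypothesis applied to Levi subgroups $M \subsetneq G$ gives that $\BL^{\on{Betti}}_{M,\on{coarse}}$ sends compacts to $\Coh_\Nilp(\LS^{\on{Betti}}_\cM)$, and then item (iii) propagates this to the essential image matching $(\IndCoh_\Nilp(\LS^{\on{Betti}}_\cG)_\Eis)^c$.

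For item (i), I would repeat the argument of \lemref{l:temp and Eis gen}: set $\Shv^{\on{Betti}}_{\frac{1}{2},\Nilp}(\Bun_G)_{\on{cusp}} := (\Shv^{\on{Betti}}_{\frac{1}{2},\Nilp}(\Bun_G)_\Eis)^\perp$ with embedding $\be^{\on{Betti}}$, and show that the composition $\be^{\on{Betti},L}\circ \bu$ is a localization by exhibiting its right adjoint as a fully faithful factorization through the tempered category. This requires the Betti analog of \cite[Theorem A]{Be1}, namely that cuspidal objects in $\Shv^{\on{Betti}}_{\frac{1}{2},\Nilp}(\Bun_G)$ are automatically tempered. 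For item (ii), one can either invoke that $\LS^{\on{Betti}}_\cG$ is quasi-smooth with the same shape of singular codirections as $\LS_\cG$, so that the Koszul-duality argument of \cite[Theorem 13.3.6]{AG} applies verbatim, or one can transport compact generation along the restricted-variation identification after passing through $\QCoh(\LS^{\on{Betti}}_\cG)_{\on{restr}}$. Item (iii) should follow from the local-to-global presentation of the Poincar\'e sheaves $\on{Poinc}^{\on{Vac,glob}}_{!,\Nilp}$ for $G$ and for $M$, using that the Hecke compatibility of $\BL^{\on{Betti}}_G$ (\propref{p:L and Hecke}, Betti version) intertwines Satake with the pushforward along $\LS^{\on{Betti}}_\cP \to \LS^{\on{Betti}}_\cG$.

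The principal obstacle is item (i), specifically the Betti analog of Beraldo's theorem that Eisenstein series land in the tempered subcategory. The de Rham proof uses Kac--Moody localization, which is unavailable in the Betti setting; however, thanks to the Riemann--Hilbert identification $\Shv^{\on{Betti,constr}}_{\frac{1}{2},\Nilp}(\Bun_G) \simeq \Dmod^{\on{RS}}_{\frac{1}{2},\Nilp}(\Bun_G)$ and the fact that Eisenstein functors preserve constructibility and commute with this equivalence, the statement for constructible objects follows from the de Rham case; the full ind-constructible statement then follows by continuity of $\Eis^{\on{Betti}}_!$ and the compact generation in \secref{sss:betti comp gen}. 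All remaining ingredients are either formal consequences of the compatibilities already established, or straightforward transcriptions of their de Rham counterparts.
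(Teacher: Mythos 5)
Your overall reduction --- to (i) a Betti version of \lemref{l:temp and Eis gen}, (ii) a Betti version of the spectral-side generation statement \cite[Theorem 13.3.6]{AG}, and (iii) a Betti version of \lemref{l:Eis}, followed by induction on semisimple rank --- matches the paper's proof structure exactly; the paper lists the first and third of these as its two ingredients and asserts they are ``proved in the same way as in the de Rham setting.'' Two remarks on your discussion of item (i). First, a slip: in your closing paragraph you describe the Beraldo-type input as ``Eisenstein series land in the tempered subcategory,'' but (as you correctly state a few lines earlier) it is \emph{cuspidal} objects that are tempered; Eisenstein objects are precisely the ones that need not be.

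Second, your proposed Riemann--Hilbert reduction for item (i) has a gap. Riemann--Hilbert identifies the ind-constructible category $\Shv^{\on{Betti,constr}}_{\frac{1}{2},\Nilp}(\Bun_G)$ with $\Dmod^{\on{RS}}_{\frac{1}{2},\Nilp}(\Bun_G)$, but the cuspidal subcategory of the \emph{big} Betti category $\Shv^{\on{Betti}}_{\frac{1}{2},\Nilp}(\Bun_G)$ --- defined as the orthogonal to the Eisenstein subcategory --- is not a priori contained in the ind-constructible subcategory, and the cuspidal projection need not preserve (ind-)constructibility. So transporting Beraldo's de Rham theorem across RH only yields the tempered containment for ind-constructible cuspidals, whereas \lemref{l:temp and Eis gen} needs it for all cuspidals in the big category. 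You would either need to show separately that cuspidal objects in $\Shv^{\on{Betti}}_{\frac{1}{2},\Nilp}(\Bun_G)$ are automatically ind-constructible, or do what the paper actually does: state and prove directly in the Betti setting the stronger containment $\Shv^{\on{Betti}}_{\frac{1}{2},\Nilp}(\Bun_G)_{\on{cusp}} \subseteq \on{Im}(\on{Id}^{\on{nv}})$, which then combines with the Betti analog of Beraldo's Theorem B already used in the proof of \propref{p:uR ff on compacts Betti}. Your stated motivation for preferring the RH route --- that the de Rham argument of \cite{Be1} ``uses Kac--Moody localization'' --- is also questionable: the cuspidal-implies-naive argument there goes via contraction/hyperbolic restriction, which is available in the Betti topological setting, and this is presumably why the paper is content to say the proof is the same as in the de Rham case.
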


\sssec{}

\propref{p:uR ff on compacts Betti} is proved along the same lines as \propref{p:uR ff on compacts}
using the following ingredients:

\begin{itemize}

\item The category $\Shv^{\on{Betti}}_{\frac{1}{2},\Nilp}(\Bun_G)_{\on{co}}$ is well-defined
(see \cite[Theorem 14.1.5]{AGKRRV}), and the functor 
$$\Mir_{\Bun_G}:\Shv^{\on{Betti}}_{\frac{1}{2},\Nilp}(\Bun_G)_{\on{co}}\to
\Shv^{\on{Betti}}_{\frac{1}{2},\Nilp}(\Bun_G)$$
is an equivalence;

\smallskip

\item The essential image of the functor
$$\on{Id}^{\on{nv}}:\Shv^{\on{Betti}}_{\frac{1}{2},\Nilp}(\Bun_G)_{\on{co}}\to
\Shv^{\on{Betti}}_{\frac{1}{2},\Nilp}(\Bun_G)$$
lies in $\Shv^{\on{Betti}}_{\frac{1}{2},\Nilp}(\Bun_G)_{\on{temp}}$.

\end{itemize}

Both these assertions are proved in the same way as in the de Rham setting. 

\sssec{} 

\propref{p:images Betti} is proved along the same lines as \propref{p:images}
using the following ingredients:

\begin{itemize}

\item The subcategory
$$\Shv^{\on{Betti}}_{\frac{1}{2},\Nilp}(\Bun_G)_{\on{cusp}}\subset \Shv^{\on{Betti}}_{\frac{1}{2},\Nilp}(\Bun_G)$$
is contained in the essential image of the functor $\on{Id}^{\on{nv}}$.

\smallskip

\item The diagram 
$$
\CD 
\Shv^{\on{Betti}}_{\frac{1}{2},\Nilp}(\Bun_M) @>{\BL^{\on{Betti}}_{M,\on{coarse}}}>> \QCoh(\LS^{\on{Betti}}_\cM) \\
@V{\Eis_!}VV @VV{\Eis^{\on{spec}}_{\on{coarse}}}V \\
\Shv^{\on{Betti}}_{\frac{1}{2},\Nilp}(\Bun_G)@>{\BL^{\on{Betti}}_{G,\on{coarse}}}>> \QCoh(\LS^{\on{Betti}}_\cG)
\endCD
$$
commutes up to cohomological shifts and tensoring by line bundles.

\end{itemize}

Both these assertions are proved in the same way as in the de Rham setting. 

\section{The structure of Hecke eigensheaves}\label{s:eigensheaves}

In this section, we discuss the structure of Hecke eigensheaves for irreducible
spectral parameters, building on 
recent results from \cite{AGKRRV} and \cite{FR}.

\medskip 
  
Below, we will assume GLC in the de Rham setting, and hence in the Betti setting.

\subsection{Statement of the main result}

\subsubsection{The Hecke eigensheaf}\label{sss:the-eigensheaf}

Let $\sigma \in \LS_{\cG}$ be a $k$-point of $\LS_{\cG}$.

\medskip 

We define the category
\[
\Dmod_{\frac{1}{2}}(\Bun_G)_{\on{Hecke},\sigma} := 
\Dmod_{\frac{1}{2}}(\Bun_G) \underset{\QCoh(\LS_{\cG})}{\otimes} \Vect
\]
\noindent of \emph{Hecke eigensheaves} for $\sigma$. Here $\QCoh(\LS_{\cG})$ acts
on $\Vect$ by pullback along $\sigma:\Spec(k) \to \LS_{\cG}$.

\medskip

Using the pushforward functor 
$\sigma_*:\Vect \to \QCoh(\LS_{\cG})$, we obtain a functor
functor
\[
\oblv_{\on{Hecke},\sigma}: \Dmod_{\frac{1}{2}}(\Bun_G)_{\on{Hecke},\sigma} \to 
\Dmod_{\frac{1}{2}}(\Bun_G).
\]

\medskip 

Now assume that $\sigma$ is an \emph{irreducible} local system, i.e., $\sigma$ does
not admit a reduction to any proper parabolic $\cP \subsetneq \cG$. 
By \cite[Prop. 13.3.3]{AG} and our running assumption that GLC holds, the functor
$\BL_G$ induces an equivalence:
\[
\BL_{G,\sigma}:\Dmod_{\frac{1}{2}}(\Bun_G)_{\on{Hecke},\sigma} \simeq \Vect.
\]

Throughout this section, we let
\[
\widetilde{\CF}_{\sigma} \in \Dmod_{\frac{1}{2}}(\Bun_G)_{\on{Hecke},\sigma}
\]
\noindent denote the unique object with
\[
\BL_{G,\sigma}(\widetilde{\CF}_{\sigma}) = k[-\dim(\Bun_G)+\dim(\Bun_{N,\rho(\omega_X)})].
\]
\noindent 
The shift here is motivated by \thmref{t:L coarse left exact Nilp}.

\medskip 

We then let
\[
\CF_{\sigma} := \oblv_{\on{Hecke},\sigma}(\widetilde{\CF}_{\sigma}) \in \Dmod_{\frac{1}{2}}(\Bun_G)
\]
\noindent denote the underlying D-module of the canonical
Hecke eigensheaf.

\subsubsection{}

We will prove the following result. Most parts of it 
are not original to this work; we will discuss 
attributions after stating the result.

\begin{thm}\label{t:eigensheaves}

As above, let $\sigma$ be an irreducible local system and
assume the geometric Langlands conjecture.

\begin{enumerate}

\item\label{i:eigen-rs} $\CF_{\sigma}$ is holonomic and 
has regular singularities.

\item\label{i:eigen-nilp} 
The singular support of $\CF_{\sigma}$ lies in the
nilpotent cone $\Nilp \subset T^*(\Bun_G)$.

\item\label{i:eigen-perv} $\CF_{\sigma}$ lies in the heart 
$\Dmod_{\frac{1}{2}}(\Bun_G)^{\heartsuit}$ of 
$\Dmod_{\frac{1}{2}}(\Bun_G)$.

\item\label{i:eigen-l-packets} 

Let $S_{\sigma}$ denote the (non-derived) group of automorphisms of 
$\sigma$ as a local system. Then $\CF_{\sigma}$ has a decomposition: 
\[
\CF_{\sigma} \simeq 
\oplus_{\rho \in \on{Irrep}(S_{\sigma})}
\CF_{\sigma,\rho}^{\dim \rho}
\]
\noindent where each object
\[
\CF_{\sigma,\rho} \in \Dmod_{\frac{1}{2}}(\Bun_G)^{\heartsuit}
\]
\noindent is a \emph{simple} (holonomic) $D$-module.
Moreover, these simple objects are distinct: 
$\CF_{\sigma,\rho_1} \simeq \CF_{\sigma,\rho_2}$ if and only
if $\rho_1 = \rho_2$.

In particular,
$\CF_{\sigma} \in \Dmod_{\frac{1}{2}}(\Bun_G)^{\heartsuit}$ 
is semi-simple. 

\item\label{i:eigen-CC}

Assume the genus of $X$ is at least two and $G$ has connected center.

Let $[\Nilp]$ denote the cycle on $T^*(\Bun_G)$ defined
by the nilpotent cone. In other words, the multiplicities
of $[\Nilp]$ are the multiplicities of irreducible components
of the closed substack\footnote{We recall that $\Nilp$ is defined scheme-theoretically as the preimage of $0$ under the Hitchin map.}
$\Nilp \subset T^*(\Bun_G)$.
Then the characteristic cycle $\on{CC}(\CF_{\sigma})$ equals 
$[\Nilp]$.

\end{enumerate}

\end{thm}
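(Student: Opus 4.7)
The plan is to treat parts \eqref{i:eigen-rs}--\eqref{i:eigen-l-packets} as formal consequences of GLC together with results already quoted in the paper, and to devote substantive work to part \eqref{i:eigen-CC}.

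For parts \eqref{i:eigen-rs} and \eqref{i:eigen-nilp}: the skyscraper $\sigma_*(k) \in \IndCoh_\Nilp(\LS_\cG)$ is set-theoretically supported at a classical point and has singular support contained in the zero section, hence lies in $\QCoh(\LS_\cG)_{\on{restr}}$. Via the compatibility of $\BL_G$ with its restricted variant explained in \secref{sss:red to restr} and \eqref{e:Dmod nilp as ten}, the sheaf $\widetilde{\CF}_\sigma$ lands in $\Dmod_{\frac{1}{2},\Nilp}(\Bun_G)$, giving \eqref{i:eigen-nilp}. Regular singularities \eqref{i:eigen-rs} then follow from \cite[Corollary 16.5.6]{AGKRRV}. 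For part \eqref{i:eigen-perv}: the shift in \secref{sss:the-eigensheaf} is calibrated precisely so that t-exactness of $\BL^{\on{restr}}_{G,\on{coarse}}[\dim(\Bun_G)-\dim(\Bun_{N,\rho(\omega_X)})]$ (\thmref{t:L coarse left exact Nilp}) forces $\widetilde{\CF}_\sigma$, hence $\CF_\sigma$, into $\Dmod_{\frac{1}{2}}(\Bun_G)^{\heartsuit}$.

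For part \eqref{i:eigen-l-packets}: the finite group $S_\sigma$ acts naturally on $\sigma_*(k)$ through its action on stalks of $\cG$-representations at $\sigma$, and this action transports under GLC to an action on $\widetilde{\CF}_\sigma$. Upgrading to the $S_\sigma$-equivariant category yields an equivalence with $\Rep(S_\sigma)$, under which $\widetilde{\CF}_\sigma$ corresponds to the regular representation $k[S_\sigma]$ (suitably shifted). Decomposing the regular representation into isotypic components gives the direct sum decomposition of $\CF_\sigma$, while simplicity and distinctness of the $\CF_{\sigma,\rho}$ follow by Schur's lemma from the corresponding facts for $\Rep(S_\sigma)$.

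Part \eqref{i:eigen-CC} is the main new content. The strategy, as outlined in the introduction, is two-fold: (a) show that $\on{CC}(\CF_\sigma)$ is constant as $\sigma$ varies over the irreducible locus of $\LS_\cG$, and (b) compute the constant value at one particular $\sigma$ using \cite{BD}. Under the hypotheses that $G$ has connected center and $g \geq 2$, the irreducible locus is smooth and connected, so constancy reduces to local constancy. For (b), one appeals to the eigensheaves constructed by \cite{BD} for oper local systems via Kac--Moody localization, for which a direct computation in \emph{loc.\ cit.} identifies the characteristic cycle with $[\Nilp]$. For (a), the plan is to leverage the Betti--de Rham comparison \thmref{t:dR => Betti bis}: construct an analytic family of Hecke eigensheaves over an analytic open in the irreducible locus of $\LS_\cG^{\on{Betti}}$ using the $\QCoh(\LS_\cG^{\on{Betti}})$-linearity of $\BL_G^{\on{Betti}}$ (compare \propref{p:L and Hecke}) applied to structure sheaves of analytic neighborhoods, and invoke the fact that characteristic cycles of families of perverse sheaves with fixed conic Lagrangian microsupport are topologically locally constant.

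The main obstacle is step (a). Characteristic cycles of individual D-modules are not homotopy invariants, so one must genuinely exploit the rigidity provided by parts \eqref{i:eigen-perv} (perversity) and \eqref{i:eigen-nilp} (fixed nilpotent microsupport). The construction of a bona fide $S$-family of Hecke eigensheaves, rather than a pointwise-defined assignment, requires care; once such a family is available, Kashiwara--Schapira-type microlocal local-constancy results applied in the Betti setting (where singular support has a topological flavor) should finish the argument.
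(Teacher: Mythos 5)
Your overall plan for part \eqref{i:eigen-CC} matches the paper: prove local constancy of $\on{CC}(\CF_\sigma)$ over the irreducible locus, then compute the constant value via \cite[Prop.\ 5.1.2]{BD} (together with irreducibility of $\LS_\cG$ when $\cG$ has simply-connected derived group). But your proposed mechanism for step (a) --- building an analytic family of eigensheaves and invoking Kashiwara--Schapira-type local constancy for families of perverse sheaves with fixed Lagrangian microsupport --- is not what the paper does, and as you acknowledge, you haven't filled in how that family is constructed or what precise constancy theorem applies. The paper's argument is cleaner and purely ``microlocal-corepresentability'': for a generic smooth point $\xi$ of a component $\Nilp^\alpha$, the microstalk functor $\mu_\xi$ on $\Shv^{\on{Betti}}_{\Nilp,\frac12}(\Bun_G)$ is t-exact and corepresented by a compact object $\mu\delta_\xi$; transporting $\mu\delta_\xi$ under $\BL_G^{\on{Betti}}$ gives a \emph{compact} (hence perfect) object $\CE_\xi$ of $\Coh_\Nilp(\LS^{\on{Betti}}_\cG)$; restricting to the irreducible locus and using perversity (part \eqref{i:eigen-perv}) shows $\CE_\xi^{\on{irred}}$ is concentrated in one degree, hence a vector bundle, whose fiber rank at $\sigma$ is $\on{ord}_\xi\on{CC}(\CF_\sigma)$. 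Local constancy is then the local constancy of the rank of a vector bundle. This is the key new idea you are missing.

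There are also genuine gaps in parts \eqref{i:eigen-perv} and \eqref{i:eigen-l-packets}. T-exactness of $\BL^{\on{restr}}_{G,\on{coarse}}[\dim\Bun_G - \dim\Bun_{N,\rho(\omega_X)}]$ does not by itself ``force'' $\widetilde\CF_\sigma$ into the heart: $\BL^{\on{restr}}_{G,\on{coarse}}$ is not an equivalence (it is $\Psi_{\Nilp,\{0\}}\circ\BL^{\on{restr}}_G$), so you cannot push t-exactness backwards without additional input. The paper's argument (encapsulated in \cite[Thm.\ 11.2.1.2]{FR} for (3) and made explicit in the proof of (4)) relies on restricting to the connected component $\LS^{\on{restr}}_{\cG,\sigma}\subset\LS^{\on{restr}}_\cG$ containing $\sigma$, where all objects on the automorphic side are cuspidal (by \lemref{l:Eis} and irreducibility of $\sigma$) and hence tempered (by \cite{Be1}); only then does $\Psi$ become an equivalence on that piece, and one obtains a \emph{t-exact equivalence} with $\QCoh(\LS^{\on{restr}}_{\cG,\sigma})$. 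Your \eqref{i:eigen-l-packets} sketch also elides this temperedness step, replacing it with a hypothetical $S_\sigma$-equivariant enhancement that is not set up, and mislabels $S_\sigma$ as finite (it is a finite extension of $Z_\cG$, hence reductive but typically not finite). What the paper actually uses is the closed embedding $\Spec(k)/S_\sigma \hookrightarrow \LS^{\on{restr}}_{\cG,\sigma}$ from \cite[Prop.\ 4.3.5]{AGKRRV}, which identifies $\sigma_*(k)$ with the regular representation $R_{S_\sigma}$ inside a fully faithful copy of $\Rep(S_\sigma)^\heartsuit\subset\QCoh(\LS^{\on{restr}}_{\cG,\sigma})^\heartsuit$, and reductivity of $S_\sigma$ for the decomposition $R_{S_\sigma}\simeq\bigoplus_\rho\rho^{\dim\rho}$.
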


In the above, \eqref{i:eigen-rs} and \eqref{i:eigen-nilp} 
are due to \cite{AGKRRV} and \eqref{i:eigen-perv} is due
to \cite{FR}. Also, \cite{FR} observed that its results imply
\eqref{i:eigen-l-packets} assuming GLC, and 
proved a special case of this result without GLC. The last result
\eqref{i:eigen-CC} is new.

\medskip 

The above results have old folklore status as conjectures. 
Most appear in \cite[Conj. 6.3.2]{L}. We expect
\eqref{i:eigen-CC} to hold without the restriction on $G$,
and our methods below prove something in this direction in general
(e.g., we prove that \eqref{i:eigen-CC} holds for a non-trivial class
of local systems $\sigma$).

\begin{rem}

We note that one can check directly that the multiplicity of the zero section in 
$[\Nilp]$ is $\prod_{d_i \geq 2} d_i^{\dim \Gamma(\Omega_X^{1,\otimes d_i})} = 
\prod d_i^{(2d_i-1)(g-1)}$ for $d_i$ running over the exponents
of $G$; it follows that this is the rank of $\CF_{\sigma}$ at a generic
point of $\Bun_G$.

\end{rem}

\begin{proof}[Proof of \thmref{t:eigensheaves}, 
\eqref{i:eigen-rs}-\eqref{i:eigen-l-packets}]

First, \eqref{i:eigen-nilp} is \cite[Cor. 14.4.10]{AGKRRV}.
As $\Nilp$ is Lagrangian, we deduce that Hecke eigensheaves 
are holonomic. They have regular singularities 
by \cite[Cor. 16.5.7]{AGKRRV}. 
Then \eqref{i:eigen-perv} is \cite[Thm. 11.2.1.2]{FR}.

\medskip 

We now turn to \eqref{i:eigen-l-packets}. Recall that 
$S_{\sigma}$ is an extension of a finite group by 
the center $Z_{\cG}$ of $\cG$; in particular, $S_{\sigma}$ is 
a (disconnected) reductive group.  

\medskip 

Let $\LS_{\cG,\sigma}^{\on{restr}} \subset \LS_{\cG,\sigma}^{\on{restr}}$ denote the connected component containing 
$\sigma$. Define:
\[
\Dmod_{\Nilp,\frac{1}{2}}(\Bun_G)_{\sigma} :=
\Dmod_{\Nilp,\frac{1}{2}}(\Bun_G) 
\underset{\QCoh(\LS_{\cG}^{\on{restr}})}
{\otimes}
\QCoh(\LS_{\cG,\sigma}^{\on{restr}}).
\]
\noindent As in \cite[Cor. 14.3.5]{AGKRRV}, this category
is a direct summand of $\Dmod_{\Nilp,\frac{1}{2}}(\Bun_G)$.

\medskip 

By \lemref{l:Eis}, every object of $\Dmod_{\Nilp,\frac{1}{2}}(\Bun_G)_{\sigma}$
is cuspidal. Therefore, by \cite{Be1}, every object of 
$\Dmod_{\Nilp,\frac{1}{2}}(\Bun_G)_{\sigma}$ is tempered. 
Therefore, $\BL_G$ induces an equivalence
\[
\BL_{G,\widehat{\sigma}}:\Dmod_{\Nilp,\frac{1}{2}}(\Bun_G)_{\sigma} \simeq 
\QCoh(\LS_{\cG,\sigma}^{\on{restr}}).
\]

We then have a diagram
\[
\begin{tikzcd} 
&
\Vect 
\arrow[dl,swap,"\CF_{\sigma}"] 
\arrow[dr,"\sigma_*"]
\\
\Shv_{\Nilp,\frac{1}{2}}(\Bun_G)_{\sigma}
\arrow[rr,"\BL_{G,\widehat{\sigma}}{[C]}
"]
&&
\QCoh(\LS_{\cG,\sigma}^{\on{restr}})
\end{tikzcd}
\]
\noindent that commutes by definition of $\CF_{\sigma}$. Here 
$C:= \dim(\Bun_G) - \dim(\Bun_{N,\rho(\omega_X)})$
and $\sigma_*$ denotes
pushforward along $\sigma:\Spec(k) \to \LS_{\cG}^{\on{restr}}$.

\medskip 

The bottom arrow in the above diagram is a t-exact equivalence by GLC and 
\thmref{t:L coarse left exact Nilp}. Therefore, Jordan-H\"older questions for 
$\CF_{\sigma}$ are the same as for 
$\sigma_*(k)$. 

\medskip 

We now remind (\cite[Prop. 4.3.5]{AGKRRV}) that $\sigma$ factors
as $\Spec(k) \to \Spec(k)/S_{\sigma} \to \LS_{\cG,\sigma}^{\on{restr}}$ with the
latter map being a closed embedding. Therefore, 
there is a fully faithful embedding 
$\Rep(S_{\sigma})^{\heartsuit} \subset \QCoh(\LS_{\cG,\sigma}^{\on{restr}})^{\heartsuit}$ 
with $\sigma_*(k)$ corresponding to the regular representation of $S_{\sigma}$.

\medskip 

Now the result follows from the fact that $S_{\sigma}$ is reductive,
so its (left) regular representation has the form
$R_{S_{\sigma}} \simeq \oplus_{\rho \in \on{Irrep}(S_{\sigma})} \rho^{\dim \rho}$.

\end{proof}

\subsection{Characteristic cycles for eigensheaves}

We now turn to the proof of \thmref{t:eigensheaves} \eqref{i:eigen-CC}.

\subsubsection{Idea of the proof}

Our main new result is the following. We always assume genus $\geq 2$ 
in what follows, but $G$ is allowed to be arbitrary.

\begin{thm}\label{t:cc-cts}

For every pair of irreducible local systems 
$\sigma_1,\sigma_2 \in \LS_{\cG}$ lying in the same irreducible component
of $\LS_{\cG}$, $\on{CC}(\CF_{\sigma_1}) = 
\on{CC}(\CF_{\sigma_2})$.

\end{thm}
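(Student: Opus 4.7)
The strategy is to exhibit the assignment $\sigma \mapsto \on{CC}(\CF_\sigma)$ as the fiberwise characteristic cycle of a single holonomic family of D-modules parametrized by the locus of irreducible local systems inside an irreducible component of $\LS_\cG$, and then to invoke the standard constancy of characteristic cycles in smooth holonomic families. Fix an irreducible component $Z \subset \LS_\cG$ and let $Z^{\on{irr}} \subset Z$ be the open locus of irreducible local systems; this is nonempty (since $\sigma_1,\sigma_2$ lie in it) and connected, its complement being a proper closed substack of the irreducible $Z$. It therefore suffices to prove that $\on{CC}(\CF_\sigma)$ is locally constant on $Z^{\on{irr}}$, and one may pass to an arbitrary smooth affine open $T \subset Z^{\on{irr}}$.

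To construct such a family over $T$, I would invoke GLC relatively. The structure sheaf $\CO_T$, viewed via the open inclusion as an object of $\QCoh(\LS^{\on{restr}}_\cG)$, corresponds under $\BL_G^{-1}$ to a $\QCoh(T)$-linear object
\[
\CF_T \in \Dmod_{\frac{1}{2},\Nilp}(\Bun_G)\underset{\QCoh(\LS^{\on{restr}}_\cG)}{\otimes}\QCoh(T),
\]
whose $!$-fiber at $\sigma \in T$ recovers $\CF_\sigma$ up to a fixed cohomological shift (by \thmref{t:L coarse left exact Nilp} and the description in \secref{sss:the-eigensheaf}). One must then upgrade $\CF_T$ to a genuine holonomic sheaf $\wt\CF_T$ on the product $\Bun_G \times T$ whose restriction to each slice $\Bun_G \times \{\sigma\}$ reproduces $\CF_\sigma$. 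The most efficient route is to pass to the Betti setting via \thmref{t:dR => Betti bis} — where by \eqref{i:eigen-rs} nothing about characteristic cycles is lost under Riemann-Hilbert — and to build directly there a relative eigensheaf on $\Bun_G \times T^{\on{Betti}}$ for the tautological $\cG$-local system on $X \times T^{\on{Betti}}$, unique up to a local system on $T^{\on{Betti}}$ by Betti GLC applied relatively.

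Once $\wt\CF_T$ is in hand, the desired local constancy is a well-known microlocal fact: for a regular holonomic D-module (equivalently, a constructible Betti sheaf) on a smooth product $Y \times S$, the characteristic cycles of the slices $Y \times \{s\}$ are locally constant in $s$, as one sees by interpreting $\on{CC}$ as a class in the appropriate Lagrangian cycle group stable under smooth deformation, or via Kashiwara's index theorem. Covering $Z^{\on{irr}}$ by opens of the form $T$ and appealing to connectedness then gives $\on{CC}(\CF_{\sigma_1}) = \on{CC}(\CF_{\sigma_2})$, completing the proof.

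The principal obstacle is precisely the construction of $\wt\CF_T$ in the second step: what GLC produces for free is only a $\QCoh(T)$-linear object of $\Dmod_{\frac{1}{2},\Nilp}(\Bun_G)$, not a D-module on the product $\Bun_G \times T$. Bridging this gap requires a genuinely relative form of the eigensheaf construction — either by formulating the Hecke-eigenvalue conditions against the universal local system as a system of D-module equations on $\Bun_G \times T$ and verifying they cut out a family of the correct kind, or, as suggested above, by working in the Betti setting and exploiting the richer Künneth and base-change formalism available there for constructible sheaves. Everything downstream, including the actual constancy argument for characteristic cycles, is then formal.
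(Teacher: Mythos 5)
Your proposal correctly identifies the target statement (local constancy of $\on{CC}(\CF_\sigma)$ on the irreducible locus of a component of $\LS_\cG$) but, as you yourself flag, the central step — producing a genuine holonomic sheaf $\wt\CF_T$ on $\Bun_G\times T$ whose slices are the $\CF_\sigma$ — is not carried out, and this is a real gap, not a routine omission. What GLC supplies is an object of the $\QCoh(T)$-linear category $\Dmod_{\frac{1}{2},\Nilp}(\Bun_G)\otimes_{\QCoh(\LS_\cG^{\on{restr}})}\QCoh(T)$, and there is no canonical functor from that tensor product to $\Shv(\Bun_G\times T)$ (even in the Betti setting the K\"unneth comparison only goes one way). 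Moreover, even granting such a family, ``constancy of $\on{CC}$ along slices'' is not automatic for an arbitrary holonomic family: one needs the singular support of $\wt\CF_T$ to lie in $\Nilp\times 0_{T^*T}$ (i.e., local constancy in the $T$-direction microlocally), which would itself require a further argument.

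The paper's proof sidesteps both issues by dualizing. Rather than building a family of eigensheaves, it fixes a generic smooth point $\xi$ on a component of $\Nilp$ and uses the \emph{microstalk functor} $\mu_\xi:\Shv^{\on{Betti}}_{\Nilp,\frac12}(\Bun_G)\to\Vect$: it is t-exact, corepresented by a compact object $\mu\delta_\xi$, and $\dim\mu_\xi(\CF)$ computes the multiplicity of $\on{CC}(\CF)$ at $\xi$. Pushing $\mu\delta_\xi$ through $\BL_G^{\on{Betti}}$ gives a compact, hence perfect, object $\CE_\xi$ of $\Coh_{\Nilp}(\LS_\cG^{\on{Betti}})$; restricting to the irreducible locus, the dual of its fiber at $\sigma$ is $\mu_\xi(\CF_\sigma)$, which by t-exactness sits in a single degree. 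So $\CE_\xi^{\on{irred}}$ is a vector bundle, its rank function (equivalently $\on{ord}_\xi\on{CC}(\CF_\sigma)$) is locally constant, and since $\LS_\cG^{\on{Betti,irred}}$ is smooth and dense in $\LS_\cG^{\on{Betti}}$, its connected components match irreducible components of $\LS_\cG^{\on{Betti}}$. This achieves the family-constancy statement you want, but entirely on the spectral side, where ``perfect complex with fibers in one degree $\Rightarrow$ vector bundle'' replaces the microlocal deformation argument you would otherwise need. If you want to pursue your original route, you should first establish the family object and its microlocal transversality; otherwise, the microstalk dualization is the cleaner path.
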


The following is \cite[Prop. 5.1.2]{BD}:

\begin{thm}[Beilinson-Drinfeld]\label{t:cc-existence}

Let $\LS_{\cG,\on{neut}} \subset \LS_{\cG}$ be the irreducible
component of $\LS_{\cG}$ containing the trivial local system. 
Then there exists an irreducible local system $\sigma \in \LS_{\cG,\on{neut}}$ 
such that $\on{CC}(\CF_{\sigma}) = [\Nilp]$.

\end{thm}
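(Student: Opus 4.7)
The plan is to exhibit $\sigma$ as the local system underlying a generic (hence irreducible) $\cG$-oper on $X$ and to construct $\CF_\sigma$ via the critical-level Kac--Moody localization of Beilinson--Drinfeld. Concretely, I fix a point $x \in X$, let $\widehat{\fg}_{\on{crit}}$ be the affine Kac--Moody algebra at critical level at $x$, and let $\BV^\sigma$ be the vacuum module on which the Feigin--Frenkel center acts through the character determined by the restriction of $\sigma$ to the formal punctured disk at $x$. Applying the critical localization functor $\Delta \colon \widehat{\fg}_{\on{crit}}\mod \to \Dmod_{\frac{1}{2}}(\Bun_G)$ produces $\CF_\sigma := \Delta(\BV^\sigma)$, and the key assertion of \cite{BD} is that this is a nonzero Hecke eigensheaf with eigenvalue $\sigma$.

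To identify $\on{CC}(\CF_\sigma)$ with $[\Nilp]$, I would use the PBW filtration on $\BV^\sigma$, which induces a good filtration on $\CF_\sigma$ via $\Delta$. The associated graded is a coherent module over the symmetric algebra of a suitable Lie algebra extension, and under localization corresponds to a coherent sheaf on $T^*\Bun_G$ whose scheme-theoretic support lies in the Hitchin fiber over the Hitchin-base point determined by the symbol of the oper $\sigma$. Since opers have vanishing Hitchin symbol, this fiber is precisely the global nilpotent cone, establishing that $\on{CC}(\CF_\sigma)$ is set-theoretically supported on $\Nilp$. For the multiplicities, one argues by flat degeneration in a one-parameter family of Higgs pairs: the semiclassical limit of $\CF_\sigma$ is obtained as the specialization of the structure sheaf of a nearby (generically smooth) Hitchin fiber to the zero fiber, and the resulting cycle on $\Nilp$ is by construction $[\Nilp]$.

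The main obstacle is rigorously tracking the multiplicity computation component-by-component: while set-theoretic support in $\Nilp$ is formal from the oper/Hitchin compatibility, matching multiplicities along non-top-dimensional components of $\Nilp$ requires careful analysis of the interaction between the PBW filtration on $\BV^\sigma$, the critical-level localization functor $\Delta$, and the flatness properties of the Hitchin map. This is essentially the technical content of \cite[Sect. 5]{BD}. A secondary point that needs checking is that the underlying local system of a generic oper in $\LS_{\cG,\on{neut}}$ is irreducible; this follows from Zariski openness of irreducibility on $\LS_{\cG,\on{neut}}$ together with the fact that the monodromy of a generic oper can be computed to be irreducible (for instance, by specializing to simpler examples where the monodromy is known explicitly).
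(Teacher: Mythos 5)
The paper does not prove this theorem; it cites it directly from \cite[Prop.~5.1.2]{BD}, so there is no in-paper argument to compare against. That said, your sketch correctly identifies the Beilinson--Drinfeld strategy: take $\sigma$ to be the local system underlying a generic oper, construct $\CF_\sigma$ by critical Kac--Moody localization of the vacuum module, and control the characteristic cycle via the PBW filtration, the Feigin--Frenkel center, and flatness of the Hitchin map.

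A few points need care. First, the phrase ``opers have vanishing Hitchin symbol'' is misleading: the Hitchin map on opers is an \emph{isomorphism} onto the Hitchin base, not zero. What is actually true is that the PBW filtration endows $\gr\fz$ with a strictly positive grading, so the associated graded of any oper's character is the augmentation; hence $\gr\BV^\sigma$ is supported over the origin of the (local) Hitchin base \emph{independently} of $\sigma$, which is why the singular support of $\CF_\sigma$ lands in the zero Hitchin fiber. Second, for the multiplicities, \cite{BD} identify $\gr\CF_\sigma$ with $\CO_\Nilp$, the structure sheaf of the scheme-theoretic Hitchin zero-fiber, using Hitchin flatness; since the paper \emph{defines} $[\Nilp]$ as the cycle of this scheme, the equality $\on{CC}(\CF_\sigma)=[\Nilp]$ is then immediate. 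Your ``flat degeneration in a one-parameter family of Higgs pairs'' can be read as a paraphrase of the Rees-construction mechanism, but as stated it is too imprecise to serve as the argument: there is no literal degeneration of $\CF_\sigma$ along the Hitchin base, and it is unclear what the ``specialization of the structure sheaf of a nearby Hitchin fiber'' means for a D-module. Third, and most substantively, the irreducibility of the monodromy of a generic oper is a genuine theorem that your proposal does not establish: Zariski-openness of the irreducible locus in $\LS_\cG$ does \emph{not} imply that the positive-codimension image of the oper space meets it, and ``specializing to simpler examples'' gives no actual irreducible oper without further work. This is a non-trivial input of \cite{BD} that would have to be cited or reproduced, and you should flag it as such rather than treat it as a formality.
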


We now recall that $\LS_{\cG}$ is irreducible when $\cG$ has simply-connected
derived group (and the genus is $\geq 2$), see \cite[Prop. 2.11.4]{BD}. 
Therefore, the above two results
imply the claim.

\begin{rem}

Theorem \ref{t:cc-existence} is proved by de Rham methods, but 
Theorem \ref{t:cc-cts} will be proved by Betti methods.

\end{rem}

\subsection{Proof of Theorem \ref{t:cc-cts}}

We will prove this result using some theory of Betti constructible sheaves.

\medskip 

By the Lefschetz principle, note that Theorem \ref{t:cc-cts} reduces to 
its Betti version. Therefore, we put ourselves in this setting in what follows.

\subsubsection{Review of microstalks}

Let $k = \BC$, let $\CY$ be a smooth algebraic stack of finite type over $\BC$, 
and recall that $\Shv^{\on{Betti}}(\CY)$ is the category of all Betti sheaves on
$\CY$. Let $\Lambda \subset T^*(\CY)$ be a conical (algebraic) Lagrangian 
and consider
$\Shv_{\Lambda}^{\on{Betti}}(\CY) \subset \Shv^{\on{Betti}}(\CY)$,
which we recall is the subcategory of sheaves with singular support in 
$\Lambda$ and is defined in \cite[Sect. F.6]{AGKRRV} for stacks.

\medskip

Let $\xi \in \Lambda$ be a point lying in the smooth locus of $\Lambda$. 
The following result summarizes the microstalk theory for our needs.

\begin{thm}\label{t:microstalk}

There is a functor $\mu_{\xi}:\Shv_{\Lambda}(\CY) \to \Vect$ with the
following properties:

\begin{enumerate}

\item\label{i:micro-exact} $\mu_{\xi}$ is t-exact.

\item\label{i:micro-corep} $\mu_{\xi}$ admits a left adjoint, i.e., 
$\mu_{\xi}$ is corepresented by an object $\mu\delta_{\xi} \in \Shv_{\Lambda}(\CY)$.

\item\label{i:micro-cc} For a constructible sheaf $\CF \in \Shv_{\Lambda}^{\on{Betti}}(\CY)$,
$\dim(\mu_{\xi}(\CF))$ equals the order of the characteristic cycle of 
$\CF$ at $\xi$.

\end{enumerate}

\end{thm}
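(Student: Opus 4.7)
The plan is to reduce this to classical microlocal sheaf theory à la Kashiwara--Schapira, combined with the formal framework of $\Shv^{\on{Betti}}_\Lambda$ developed in \cite[Sect. F.6]{AGKRRV}. First I would pass to a smooth atlas $p: U \to \CY$ with $U$ a smooth scheme and a lift $\widetilde{\xi}$ of $\xi$ to $T^*U$, using that $p^!$ identifies $\Shv^{\on{Betti}}_\Lambda(\CY)$ with the $p$-equivariant objects of $\Shv^{\on{Betti}}_{\Lambda_U}(U)$ (up to cohomological shift by $\dim(U/\CY)$). So it suffices to define $\mu_{\widetilde{\xi}}$ on $\Shv_{\Lambda_U}(U)$ in a way that is independent of the chart up to canonical isomorphism.

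On the scheme $U$, write $\widetilde{\xi} = (y, \eta)$ with $\eta \neq 0$. Since $\widetilde{\xi}$ is a smooth point of $\Lambda_U$ and $\Lambda_U$ is Lagrangian, I can choose a holomorphic function $f: U' \to \BC$ on an analytic neighborhood $U'$ of $y$ with $f(y) = 0$, $df(y) = \eta$, chosen so that the graph of $df$ meets $\Lambda_U$ transversally at $\widetilde{\xi}$. Then set
\[
\mu_{\widetilde{\xi}}(\CF) := \bigl(\Gamma_{\{\on{Re}(f) \geq 0\}}(\CF)\bigr)_y[-\dim_{\BC}(U)],
\]
the local Morse datum of $\CF$ at $\widetilde{\xi}$. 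The singular support condition ensures this is independent of $f$ (up to canonical isomorphism), depending only on $\widetilde{\xi}$; this is the content of \cite[Prop. 7.5.3]{KS} and its variants. Independence from the chart $U$ and a descent argument then yield a well-defined $\mu_\xi$ on $\CY$.

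For \eqref{i:micro-exact}: the functor $\Gamma_{\{\on{Re}(f)\geq 0\}}(-)_y[-\dim U]$, applied to sheaves with singular support in $\Lambda_U$ and computed with a function $f$ transverse to $\Lambda_U$ at $\widetilde{\xi}$, agrees with the nearby/vanishing cycles construction at a Morse-type singularity, which is known to be t-exact for the perverse t-structure on $\Shv_{\Lambda_U}^{\on{Betti}}(U)$; this is the standard Morse-theoretic t-exactness of microstalks. For \eqref{i:micro-corep}: the category $\Shv_\Lambda^{\on{Betti}}(\CY)$ is presentable and $\mu_\xi$ manifestly preserves small colimits (filtered colimits commute with the sections functor in the relevant range, and the shift and stalk functor are exact), so adjoint functor theorem produces a left adjoint; equivalently, one constructs $\mu\delta_\xi$ explicitly as the corepresenting object $j_!(\sfe_W[\dim U])$ for $W$ a small open half-space around $y$ determined by $f$, suitably shifted and descended. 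For \eqref{i:micro-cc}: the identification of $\dim \mu_\xi(\CF)$ with the multiplicity of $\on{CC}(\CF)$ at $\xi$ is the Kashiwara--Schapira index/microlocal Morse formula, valid precisely because $\xi$ is a smooth point of $\Lambda$ and $f$ was chosen transverse.

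The main obstacle will be bookkeeping rather than a deep new idea: verifying that the local Morse datum construction, carried out analytically on a chart $U$, descends canonically to the stack $\CY$ and is independent of the choice of chart, analytic neighborhood and function $f$, with matching cohomological shifts. In particular, while the existence of the left adjoint is formal, producing an explicit compact corepresenting object $\mu\delta_\xi \in \Shv_\Lambda^{\on{Betti}}(\CY)$ (and verifying it lies in this subcategory, i.e.\ has singular support in $\Lambda$) requires some care with the analytic-to-algebraic passage and a gluing argument across charts.
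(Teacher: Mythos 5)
Your overall strategy — reduce to the case of a smooth scheme via a smooth atlas, invoke Kashiwara–Schapira microstalk theory there, and lift to the stack — is the same as the paper's. The differences are mostly in how much is built by hand versus cited, plus one point of genuine concern.

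On the scheme level, the paper simply cites \cite[Sect. 4.4]{GPS} for the construction and corepresentability, \cite[Cor. 10.3.13]{KS} for t-exactness, and \cite[Examples 9.5.7]{KS} for the characteristic cycle formula, whereas you reconstruct the microstalk explicitly as the local Morse datum $\Gamma_{\{\on{Re}(f)\geq 0\}}(-)_y[-\dim U]$. That is fine in principle, but your corepresentability argument has a real soft spot: you assert that $\mu_\xi$ ``manifestly preserves small colimits'' and then invoke the adjoint functor theorem. This is not manifest. The functor $\Gamma_{\{\on{Re}(f)\geq 0\}}$ involves a $j_*$ (sections on the open complement), which is a right adjoint and does not commute with colimits on $\Shv^{\on{Betti}}(U)$ at large; the fact that the composite \emph{does} preserve colimits when restricted to $\Shv^{\on{Betti}}_{\Lambda_U}(U)$ is precisely the nontrivial content behind corepresentability, and is exactly what the GPS reference is there to supply. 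So either cite it (as the paper does) or actually prove the colimit-preservation on $\Shv_{\Lambda_U}$ — e.g.\ by arguing through compact (constructible) generators — rather than calling it manifest.

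On the stack level, the paper avoids the descent bookkeeping you flag as the ``main obstacle.'' It fixes a single smooth cover $\pi: Y \to \CY$, sets $\mu_\xi(\CF) := \mu_{\widetilde\xi}(\pi^!\CF)[-\dim Y + \dim\CY]$, and produces the corepresenting object directly by $\mu\delta_\xi := \iota^L(\pi_!(\mu\delta_{\widetilde\xi}))[\dim Y - \dim\CY]$, where $\iota^L$ is the left adjoint to $\Shv^{\on{Betti}}_\Lambda(\CY) \hookrightarrow \Shv^{\on{Betti}}(\CY)$ from \cite[Cor. G.7.6]{AGKRRV}. The point is that $\Hom(\iota^L\pi_!\mu\delta_{\widetilde\xi},\CF)\simeq\Hom(\mu\delta_{\widetilde\xi},\pi^!\CF)\simeq\mu_{\widetilde\xi}(\pi^!\CF)$, so corepresentability on the stack is a formal consequence of the scheme case plus $(\pi_!,\pi^!)$-adjunction and the existence of $\iota^L$. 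This sidesteps any chart-gluing or analytic-to-algebraic descent argument, and it also automatically lands $\mu\delta_\xi$ in the $\Lambda$-constrained subcategory, which your proposed $j_!(\sfe_W[\dim U])$ does not do without further projection. Your geometric candidate for $\mu\delta_\xi$ would in any case need to be post-composed with $\iota^L$ and with $\pi_!$ to descend, at which point you have essentially reproduced the paper's formula.
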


\begin{proof}

First, assume $\CY$ is a smooth scheme, see 
e.g. \cite[Sect. 4.4]{GPS} for a review of the construction and
the corepresentability. Exactness (up to a normalizing shift) follows
from \cite[Cor. 10.3.13]{KS}. The relation to the characteristic cycle
is \cite[Examples 9.5.7]{KS}.

\medskip

In general, choose $\pi:Y \to \CY$ a smooth
cover by a smooth scheme, and we assume Let $y \in \CY$ be the point so $\xi \in T_y^*(\CY)$; let
$\widetilde{y} \in \pi^{-1}(y)$ be a lift of $y$ and let
$\widetilde{\xi} := \pi^*(\xi) \in T_{\widetilde{y}}^*(Y)$. 
Define: 
\[
\mu_{\xi}(\CF) := \mu_{\widetilde{\xi}}\circ \pi^!(\CF)[-\dim Y + \dim \CY] = 
\mu_{\widetilde{\xi}}\circ \pi^*(\CF)[\dim Y - \dim \CY].
\]

Finally, recall from \cite[Cor. G.7.6]{AGKRRV} that the embedding
$\iota:\Shv_{\Lambda}^{\on{Betti}}(\CY) \to \Shv^{\on{Betti}}(\CY)$
admits a left adjoint $\iota^L$. Then:
\[
\mu\delta_{\xi} := \iota^L(\pi_!(\mu\delta_{\widetilde{\xi}}))[\dim Y -\dim \CY].
\]
\noindent evidently corepresents the microstalk at $\xi$.

\end{proof}

\subsubsection{Microstalks and the Langlands equivalence}

Let $\Nilp^{\alpha} \subset \Nilp$ be an irreducible component and
let $\xi \in \Nilp^{\alpha}$ be a generic point lying in the smooth locus. 
We have a microstalk functor:
\[
\Shv_{\Nilp,\frac{1}{2}}^{\on{Betti}}(\Bun_G) \to \Vect
\]
\noindent and a corepresenting object $\mu\delta_{\xi}$.

\medskip 

Define $\CE_{\xi} \in \IndCoh_{\Nilp}(\LS_{\cG}^{\on{Betti}})$ as: 
\[
\CE_{\xi} := \BL_G^{\on{Betti}}(\mu\delta_{\xi})
[-\dim(\Bun_G) + \dim(\Bun_{N,\rho(\omega_X)})].
\]

We let $\CE_{\xi}^{\on{irred}} \in \QCoh(\LS_{\cG}^{\on{Betti},\on{irred}})$
be the restriction of $\CE_{\xi}$ to the irreducible locus.

\subsubsection{}

Observe that $\CE_{\xi}$ is compact, i.e., it lies in 
$\Coh_{\Nilp}(\LS_{\cG}^{\on{Betti}})$: this follows as it
corresponds to the compact object $\mu\delta_{\xi}$ under the Betti Langlands
equivalence.

\medskip 

In particular, $\CE_{\xi}^{\on{irred}}$ is perfect, so the Euler characteristic of
its fibers form a locally constant function on $\LS_\cG^{\on{Betti},\on{irred}}$.

\medskip  

We compute the dual: 
\[
\Hom_{\Vect}(\sigma^*(\CE_{\xi}),\BC)
\]
\noindent to this fiber at $\sigma \in \LS_{\cG}^{\on{Betti},\on{irred}}(\BC)$ as:
\[
\Hom_{\QCoh(\LS_{\cG}^{\on{Betti}}}(\CE_{\xi},\sigma_*(\BC))
\simeq 
\Hom_{\Shv_{\Nilp,\frac{1}{2}}^{\on{Betti}}}
(\mu\delta_{\xi},
\CF_{\sigma}) = \mu_{\xi}(\CF_{\sigma})
\]

\noindent for $\CF_{\sigma}$ the (Betti) eigensheaf corresponding to $\sigma$,
including the shift as in \secref{sss:the-eigensheaf}.

Applying \thmref{t:microstalk}, we see 
(\emph{i}) $\sigma^*(\CE_{\xi})$ is actually concentrated in 
one degree, so $\CE_{\xi}^{\on{irred}}$ is actually a vector bundle,
and (\emph{ii}) the fiber of $\CE_{\xi}^{\on{irred}}$ at 
$\sigma$ has the same dimension as the order of the characteristic cycle
of $\CF_{\sigma}$ at $\xi$. 

\medskip 

Therefore, the function $\sigma \mapsto \on{ord}_{\xi}(\on{CC}(\CF_{\sigma}))$
is a locally constant function on $\LS_{\cG}^{\on{Betti},\on{irred}}$.
It now suffices to observe that 
$\LS_{\cG}^{\on{Betti},\on{irred}}$ is smooth and dense in $\LS_{\cG}^{\on{Betti}}$, 
so connected components of $\LS_{\cG}^{\on{Betti},\on{irred}}$ are in bijection with
irreducible components of $\LS_{\cG}^{\on{Betti}}$.

%
%
%
%
%
%
%
%
%
%
%
%
%
%

\end{document}